\documentclass[11pt]{amsart}
\usepackage[parfill]{parskip}    

\usepackage{url}
\usepackage{amsmath}
\usepackage{graphicx}
\usepackage{float}
\usepackage{amssymb}
\usepackage{booktabs}
\usepackage{mathtools}
\usepackage[T1]{fontenc}
\usepackage{tikz}
\usepackage{enumerate}
\usepackage{bm}
\usepackage[hidelinks]{hyperref}

\newcommand*{\rom}[1]{\expandafter\@slowromancap\romannumeral #1@}
\newcommand{\RNum}[1]{\uppercase\expandafter{\romannumeral #1\relax}}
\newcommand{\Z}{\mathbb{Z}}  

\DeclarePairedDelimiter\floor{\lfloor}{\rfloor}
\newcommand\labs[1]{\left\lvert#1\right\rvert}

\newcommand{\Split}{\mathrm{Split}}

\newcommand{\inte}{\mathrm{int}\,}

\renewcommand{\leq}{\leqslant}
\renewcommand{\geq}{\geqslant}

\theoremstyle{plain} 
\newtheorem{thm}{Theorem}[section]
\newtheorem{lem}[thm]{Lemma}
\newtheorem{pro}[thm]{Proposition}

\newtheorem{qu}[thm]{Question}

\newtheorem{thmAlph}{Theorem}[section]

\newtheorem{corAlph}[thmAlph]{Corollary}

\theoremstyle{definition} 

\newtheorem{defn}[thm]{Definition}

\theoremstyle{remark} 
\newtheorem{rema}[thm]{Remark}
\newtheorem*{ack}{Acknowledgements}

\title{Ramsey Theory for Product Trees}
\author{Alexander Fish, Sean Skinner}
\address{School of Mathematics and Statistics, University of Sydney, Australia}
\curraddr{}
\email{alexander.fish@sydney.edu.au}
\email{sean.skinner@sydney.edu.au}
\thanks{}

\date{\today}                                           

\begin{document}

\maketitle

\begin{abstract}
We develop a Ramsey theory for leaf-generated subsets of finite products of trees. Our starting point is a Theorem of Furstenberg and Weiss which states that for every $k\geq 1$ and $\alpha>0$, if $A$ is a subset of the leaves of $T_n$, where $T_n$ is the complete binary tree of height $n$, with size $\labs{A}\geq 2^{\alpha n}$, then for $n$ sufficiently large the ancestor closed sub-tree $T_A\subset T_n$ generated by $A$ must contain a copy of $T_k$ such that (1) all vertices in the same level of $T_k$ are mapped into vertices at the same level of $T_A$, (2) if a non-leaf vertex $x\in T_k$ is mapped into a vertex $y \in T_A$, then the two children of $x$ are mapped into descendants of the two children of $y$, and (3) the levels of $T_A$ occupied by the copy of $T_k$ form an arithmetic progression.

For the product $T_n \times T_n$ of two binary trees, we show that for every $k\geq 1$ and $\alpha > 1$, for $n$ sufficiently large, every subset $A$ of the leaves of $T_n\times T_n$ of size at least $2^{\alpha n}$ has that its ancestor closure $\Gamma_A\subset T_n \times T_n$ contains similarly structured arithmetic copies of the height $k$ four-ary tree, where the four children of every non-leaf vertex in the source tree are required to map below the four distinct \textit{diagonal children} of the image vertex, where a diagonal child of a vertex $(x,y) \in T_n \times T_n$ is obtained by moving one level down each in each component.

Our results generalise to product of $d$-many finite $b$-ary trees with a sharp critical exponent of 
\[ \alpha_{\text{crit}}(d,b) =d-1 + \log_b(b-1).\]
Geometrically, our results imply that any set $E\subset [0,1)^d$ of upper Minkowski dimension greater than $\alpha_{\text{crit}}(d,b)$ contains arithmetic $b$-adic branching patterns of arbitrary finite order.
\end{abstract}
\section{Introduction}
\subsection{Background}
In 1975 Szemer\'{e}di proved his famous theorem on arithmetic progressions in subsets of integers.
\begin{thm}[Szemer\'{e}di 1975 \cite{Sz}]\label{Th: Szm}
For every integer $k\geq 1$ and every $\delta>0$, there exists $N=N(k,\delta)$ such that whenever $n\geq N$, every subset of $\{1,\ldots,n\}$ of size at least $\delta n$ contains a non-trivial $k$-term arithmetic progression, i.e. a sequence of the form
\[a,a+d,\ldots,a+(k-1)d
\qquad\text{for some integers }a,d\geq1.
\]
\end{thm}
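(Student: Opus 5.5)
Since Theorem~\ref{Th: Szm} is the classical result we build on rather than something new here, the route I would follow is Furstenberg's ergodic-theoretic one, because the paper's own machinery (the Furstenberg--Weiss tree theorem) comes from the same circle of ideas. First I would reduce to an infinitary statement by a compactness argument. If the theorem failed for some $k,\delta$, there would be sets $A_n\subset\{1,\ldots,n\}$ with $|A_n|\geq\delta n$ and no $k$-term progression. A diagonal limit then produces a set $A\subset\mathbb{Z}$ of positive upper Banach density containing no $k$-term progression. Next I would apply the Furstenberg correspondence principle: take the orbit closure $X$ of $1_A$ in $\{0,1\}^{\mathbb{Z}}$ under the shift $T$, and form a weak-$*$ limit of averages of point masses along intervals realising the density. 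This gives an invariant measure $\mu$ and a set $B=\{x:x_0=1\}$ with $\mu(B)>0$. It also gives the transfer inequality
\[
\overline{d}^*\bigl(A\cap(A-d)\cap\cdots\cap(A-(k-1)d)\bigr)\geq \mu\bigl(B\cap T^{-d}B\cap\cdots\cap T^{-(k-1)d}B\bigr).
\]
It therefore suffices to prove multiple recurrence:
\[
\liminf_{N\to\infty}\frac1N\sum_{d=1}^N\mu\Bigl(\bigcap_{j=0}^{k-1}T^{-jd}B\Bigr)>0 .
\]
From this some $d\geq1$ has positive measure of the intersection, and hence $A$ contains a $k$-term progression, which is a contradiction.

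To prove multiple recurrence I would use the Furstenberg structure theorem, possibly after passing to ergodic components. Call a factor \emph{SZ} if the recurrence inequality holds for all sets of positive measure measurable with respect to that factor. The trivial factor is SZ. By Zorn's lemma there is a maximal SZ factor $Y$; SZ is preserved under increasing limits of factors, which gives the chain condition. I would then show $Y=X$. If not, the dichotomy theorem gives an intermediate factor $Y\subsetneq Z\subseteq X$ with $Z\to Y$ either compact or weak mixing. For a weak mixing extension, I would use a van der Corput argument and induction on $k$. This shows that
\[
\frac1N\sum_{d}\prod_{j}f_j\circ T^{jd}-\frac1N\sum_d\prod_j\mathbb{E}(f_j\mid Y)\circ T^{jd}
\]
tends to $0$ in $L^2$, so the SZ property transfers from $Y$ to $Z$.

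The step I expect to be the main obstacle is lifting SZ through a \emph{compact} extension. There I would first reduce to the case where $f=1_B$ is almost periodic relative to $Y$: its orbit under $T$ is covered, fibrewise over $Y$, by finitely many $L^2(\mu_y)$-balls of radius $\varepsilon$. I would then colour return times by which ball $T^{jd}f$ lands in. Applying van der Waerden's theorem to this finite colouring, together with the SZ property of $Y$ applied to the set of base points where the colouring is controlled, yields a positive-density set of $d$ for which all $T^{jd}f$ are simultaneously $\varepsilon$-close to $f$ on a positive-measure set of fibres. The delicate bookkeeping is making this uniform in $d$ so that the average, and not just one term, stays positive. That would complete the maximality contradiction and hence the proof.
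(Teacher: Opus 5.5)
The paper does not prove this statement. Szemer\'edi's theorem is quoted as background and used only as a black box: to extract an arithmetic progression from the dense signature in the proof of Theorem~\ref{Thm: Mixed arity FW}, and inside the Pach--Solymosi--Tardos input to Theorem~\ref{Thm: Equal arity density}. The paper only needs the existence of $N(k,\delta)$, so any proof will do. Your outline of Furstenberg's ergodic proof (correspondence principle, maximal SZ factor, weak-mixing/compact dichotomy) is a legitimate route. It gives thematic continuity with Furstenberg--Weiss, but it is ineffective: it yields no explicit bound on $N(k,\delta)$. That is harmless here.

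As a roadmap it is sound, but two points need tightening.

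First, you must define SZ as multiple recurrence for progressions of \emph{every} length, not the fixed $k$ of your contradiction hypothesis. In the compact-extension step you invoke recurrence in $Y$ for length $N=W(k,M)$, the van der Waerden number for $M$ colours, where $M$ is the number of $\varepsilon$-balls. With a fixed-$k$ definition that step cannot close. Concretely, the step goes as follows.
\begin{itemize}
\item Let $C\subset Y$ have positive measure, with $\mu_y(B)>a$ on $C$ and the almost periodicity of $1_B$ controlled there.
\item SZ for $Y$ at length $N$ gives a set of $m$ of positive lower density with $\nu(C\cap T^{-m}C\cap\cdots\cap T^{-(N-1)m}C)\geq c$.
\item For $y$ in this intersection, colour $n\in\{0,\ldots,N-1\}$ by the ball containing $1_B\circ T^{nm}$ on the fibre over $y$.
\item Take a monochromatic progression $a'+jr$ and pigeonhole over the at most $N^2$ pairs $(a',r)$.
\item Using invariance, this gives $\mu\bigl(\bigcap_{j<k}T^{-jrm}B\bigr)\geq c'$ for some $r\leq N$.
\end{itemize}
So the good recurrence times are $d=rm$, not the $m$ themselves, and the closeness is to $1_B\circ T^{a'm}$ on a shifted fibre. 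Each $d$ has at most $N$ representations $d=rm$ with $r\leq N$, so positive lower density of good $m$ passes to positive lower density of good $d$. That is exactly the bookkeeping you anticipated.

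Second, a naive pointwise ("diagonal") limit of shifted indicators $1_{A_n}$ need not keep positive density. You have two fixes.
\begin{itemize}
\item Glue widely separated translates of the $A_n$. For $k\geq 3$, gaps growing fast enough prevent any progression from crossing blocks.
\item More simply, apply the correspondence directly to the finite sets. A weak-$*$ limit $\mu$ of $\frac1n\sum_{i=1}^n\delta_{T^i1_{A_n}}$ is shift-invariant, satisfies $\mu(B)\geq\delta$, and has $\mu\bigl(\bigcap_{j<k}T^{-jd}B\bigr)=0$ for every $d\geq 1$. This contradicts multiple recurrence directly, with no Banach-density step.
\end{itemize}
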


In 2003 Furstenberg and Weiss \cite{FW} proved a series of results extending Szemer\'{e}di's theorem to the setting of trees. Each such result roughly states that large subsets of vertices of sufficiently large trees contain arithmetically structured replicas of finite binary trees.

Let $T_n$ be the full binary tree of height $n$, realised as the set of binary words of length at most $n$ and ordered by initial segments. The level of a vertex $x$ is its word length $\labs{x}$, and the set of leaves, i.e. the terminal vertices at level $n$, is denoted by $\partial T_n\subset T_n$.
\begin{defn}\label{Def: Replica of T_k}
A \textit{replica} of $T_k$ inside a subset $H\subset T_n$ is a copy of $T_k$ in which both the level structure and the splitting structure are preserved. More precisely, it is the image of a map $\phi: T_k \to H$ such that
\begin{enumerate}[(i)]
\item for every $x,y\in T_k$ with $\labs{x}=\labs{y}$, we have that $\labs{\phi(x)} = \labs{\phi(y)}$, and
\item for every non-leaf vertex $x\in T_k$ with children $y$ and $z$, the vertices $\phi(y)$ and $\phi(z)$ are descendants of the two distinct children of $\phi(x)$, respectively.
\end{enumerate}
The sequence of levels occupied by the image $\phi(T_k)$ is called the \textit{signature} of the replica. The replica is \textit{arithmetic} if its signature is an arithmetic progression.
\end{defn}
The first result we recall from \cite{FW} is about subsets of $T_n$ which are large in the sense that they have positive density, where the density of a subset $H\subset T_n$ is defined to be
\[ d(H):=\frac{1}{n+1}\sum_{i=0}^n \frac{\labs{\{x \in H \, : \, \labs{x} = i\}}}{2^i}.\]
\begin{thm}[Furstenberg-Weiss 2003 \cite{FW}]\label{Th:FW Density}
For every $k\geq1$ and $\delta>0$, there exists $N=N(k,\delta)$ such that, for every $n\geq N$, every $H\subset T_n$ with $d(H)\geq\delta$ contains an arithmetic replica of $T_k$.
\end{thm}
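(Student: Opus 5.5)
The plan is to follow the Furstenberg philosophy: translate the density hypothesis into a recurrence statement for a measure-preserving (Markov-type) system on a space of labelled trees, and deduce the combinatorial statement by a correspondence principle. Throughout we use Szemer\'{e}di's Theorem~\ref{Th: Szm}, in its ergodic form as Furstenberg's multiple recurrence theorem.

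\textbf{Step 1 (correspondence).} Encode $H$ as a $\{0,1\}$-labelling of the vertices of $T_n$. Consider the space $\Omega=\{0,1\}^{T_\infty}$ of labellings of the infinite binary tree. Given a vertex $x$ of $T_n$ at level $i$, shift the picture so that $x$ becomes the root, and pad the labelling below level $n$ arbitrarily. Average the resulting point masses over a uniformly random level $i\in\{0,\dots,n\}$ and a uniformly random vertex at that level, and take a weak$^*$ limit point as $n\to\infty$ along a sequence of counterexamples. This produces a probability measure $\mu$ on $\Omega$ with the following properties.
\begin{enumerate}[(a)]
\item The cylinder $A=\{\omega:\omega(\text{root})=1\}$ has $\mu(A)\geq\delta$.
\item $\mu$ is stationary under the Markov operator $P$ that re-roots at a uniformly chosen child, i.e.\ $\mu=\tfrac12(\sigma_0)_*\mu+\tfrac12(\sigma_1)_*\mu$, where $\sigma_j$ is the shift to the subtree of the $j$-th child.
\item If $H$ contains no arithmetic replica of $T_k$, then $\mu$-almost no $\omega$ does either, viewing $\omega^{-1}(1)$ as a subset of $T_\infty$.
\end{enumerate}
Property (c) holds because containing a replica with bounded signature is a finite (cylinder) condition, and bounded signatures are the relevant ones since $n\to\infty$ while $k$ is fixed. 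So it suffices to show that $\mu$-typical configurations contain arithmetic replicas of every $T_k$.

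\textbf{Step 2 (induction on $k$ via recurrence).} For $d\geq1$, let $A_k^{(d)}\subset\Omega$ be the event that the root carries an arithmetic replica of $T_k$, rooted at the root, with common difference $d$. By definition of a replica, $A_{k}^{(d)}$ is the event that the root is in $A$ and, for each $j\in\{0,1\}$, some descendant of child $j$ at depth $d$ lies in $A_{k-1}^{(d)}$. The point is that both children must use the \emph{same} signature, so the goal is
\[
\inf\text{ over nothing, but: } \sum_{d\geq1}\mu\bigl(A_k^{(d)}\bigr)>0 .
\]
Stated precisely, we want to show $\mu(A_k^{(d)})>0$ for some $d$.

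Introduce the function $f_{k-1}^{(d)}(\omega)$, defined as the fraction of depth-$d$ vertices whose subtree lies in $A_{k-1}^{(d)}$. Then $P^{d}\mathbf 1_{A_{k-1}^{(d)}}$ is the expected value of this fraction. We pass to an ergodic decomposition and to the natural extension. On the natural extension, $P$ becomes an invertible measure-preserving transformation on the space of infinite backward-rooted paths, and the level of a vertex becomes a time coordinate.

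In this invertible system, arithmetic signatures $a,a+d,\dots,a+(k-1)d$ correspond to multiple-recurrence times $d,2d,\dots$. Furstenberg's multiple recurrence theorem then gives, for a positive-density set of $d$, a positive-measure set of points on which the required labels occur along a single branch. This is the one-branch (``path'') version of the statement, and it is exactly Szemer\'{e}di's theorem applied along random branches.

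\textbf{Step 3 (the main obstacle: the two children must share a signature).} Recurrence along a single branch only produces chains, not splitting trees. To force branching, I would use the product structure of the randomness. Fix a vertex $v$. Conditionally on the configuration above $v$ (the past in the natural extension), the futures obtained by moving to child $0$ and to child $1$ are two samples from the same conditional law. So the probability that both subtrees realise $A_{k-1}^{(d)}$ for the same $d$ behaves like $\mathbb E\bigl[(\text{conditional probability})^2\bigr]$. By Cauchy--Schwarz this is at least $(\mathbb E[\cdot])^2$, and hence positive whenever the single-branch probability is positive.

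Making this precise is where I expect the real work. First, one must arrange (for instance by symmetrising $\mu$ over the automorphism group of $T_\infty$ that swaps children) that the two child subtrees are exchangeable given the past. Second, one must run the induction with the strengthened hypothesis that, for each $k$, the set of good common differences $d$ has positive lower density, uniformly enough to intersect the good sets coming from the two children. I would try to get this uniformity from the quantitative (van der Corput / IP-Szemer\'{e}di style) form of multiple recurrence, which gives positive-density, or at least syndetic, sets of return times.

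With this strengthened hypothesis, the induction closes: $\mu(A_k^{(d)})>0$ for many $d$. Combined with property (c) from Step 1, this contradicts the assumption that $H$ contains no arithmetic replica of $T_k$ for $n$ large, and yields $N(k,\delta)$.
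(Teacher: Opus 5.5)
Your Step~1 (the correspondence giving a $P$-stationary $\mu$ with (a)--(c)) is sound. Step~3, which you flag yourself, is a genuine gap, and it is exactly where the content of the theorem lies. Write $E_j$ for the event that some descendant of child $j$ at depth $d$ below the root lies in $A_{k-1}^{(d)}$. Your Cauchy--Schwarz step needs $E_0$ and $E_1$ to be conditionally \emph{independent} and identically distributed given some $\sigma$-algebra $\mathcal F$. Only then is $\mu(E_0\cap E_1)=\mathbb E\bigl[\mu(E_0\mid\mathcal F)^2\bigr]$, and nothing in your construction supplies this.

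In the natural extension of the re-rooting chain, the past at $v$ (the subtrees rooted at the ancestors of $v$) already contains the whole subtree at $v$. That subtree determines both $\sigma_0\omega$ and $\sigma_1\omega$, so there is no residual randomness. The only available ``conditional probability'' is $\tfrac12(\mathbf 1_{E_0}+\mathbf 1_{E_1})$, and its square dominates $\mathbf 1_{E_0}\mathbf 1_{E_1}$, so the inequality runs the wrong way. If you condition on less, symmetrising over child swaps makes the two subtrees exchangeable. But exchangeability gives no lower bound at all: a swap-invariant law with $E_1=E_0^{c}$ almost surely has $\mu(E_0)=\tfrac12$ and $\mu(E_0\cap E_1)=0$.

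Intersecting sets of good differences does not rescue this. If those sets are defined globally from $\mu$, they coincide for the two children and you are back to the joint occurrence of $E_0$ and $E_1$ in one configuration. If they are defined configuration by configuration, two sets of positive lower density, or two syndetic sets, can be disjoint (even versus odd $d$). Moreover, even along a single branch your inductive step asks for recurrence into the $d$-dependent tree event $A_{k-1}^{(d)}$ at time $d$. That is not an instance of Furstenberg's theorem, which concerns a fixed set at times $d,2d,\dots,kd$. So a common difference shared by all branches is never actually produced.

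The paper does not reprove this theorem; it cites \cite{FW} and the combinatorial proof in \cite{PST}. The version it actually uses is Theorem~\ref{Thm: Equal arity density}, obtained from \cite[Theorem 1']{PST} plus Szemer\'{e}di, on the same template as its own proof of Theorem~\ref{Thm: Mixed arity FW}. That route avoids your obstacle by reversing the order of the two tasks. First one shows, purely combinatorially, that $H$ contains a single, not necessarily arithmetic, replica of $T_K$ with $K$ linear in $n$, so its signature is a subset of $\{0,\dots,n\}$ of positive density. Then one applies Szemer\'{e}di's theorem once to that signature and prunes the replica to a $(k+1)$-term progression inside it. Since all branches of a replica share one level sequence by definition, the synchronisation you are trying to force probabilistically comes for free. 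Arithmeticity is then imposed afterwards on a set of integers. If you want to keep an ergodic route, you need a recurrence statement that controls both branches at once; single-path multiple recurrence plus a correlation inequality does not give it.
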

By applying Theorem~\ref{Th:FW Density} to a set $H\subset T_n$ which is a union of a positive proportion of the levels of $T_n$, we recover Szemer\'{e}di's theorem.

Furstenberg and Weiss also proved a variant of Theorem~\ref{Th:FW Density} for subsets of $T_n$ which are large in quite a different sense, in that they are generated by a sufficiently large subset of the leaves $\partial T_n$. For $A\subset\partial T_n$, let
\[T_A:=\{x\in T_n:x\text{ is an initial segment of some }y\in A\}
\]
be the ancestor closure generated by $A$.
\begin{thm}[Furstenberg-Weiss 2003 \cite{FW}]\label{Th:FW Leaf}
For every $k\geq1$ and $\alpha>0$, there exists $N=N(k,\alpha)$ such that, for every $n\geq N$, if $A\subset\partial T_n$ and $\labs{A}\geq2^{\alpha n}$, then $T_A$ contains an arithmetic replica of $T_k$.
\end{thm}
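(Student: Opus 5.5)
Our plan is to reduce Theorem~\ref{Th:FW Leaf} to Theorem~\ref{Th:FW Density}, or more directly to Szemer\'{e}di's Theorem~\ref{Th: Szm}, through the notion of \emph{splitting levels}. For a vertex $x\in T_A$, call $x$ a \emph{splitting vertex} if both children of $x$ lie in $T_A$. The first step is a counting fact: if along every root-to-leaf path of $T_A$ there were at most $m$ splitting vertices, then $\labs{A}\leq \sum_{j\le m}\binom{n}{j}$. Hence $\labs{A}\geq 2^{\alpha n}$ forces many paths with $\geq cn$ splittings, where $c=c(\alpha)>0$ is chosen so that $\sum_{j\le cn}\binom{n}{j}<2^{\alpha n}$. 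We prefer a more uniform statement, however. By iteratively pruning, we can pass to a subtree $S\subset T_A$ generated by a subset $A'\subset A$ which is \emph{regular}: there is a set $L\subset\{0,\dots,n-1\}$ with $\labs{L}\geq cn$ such that every vertex of $S$ at a level in $L$ splits in $S$, and every vertex at a level outside $L$ has exactly one child in $S$. To obtain this, we proceed level by level from the bottom, using a greedy or entropy argument: at each level we either keep both children of all vertices or choose one child for each, retaining the choice that maximises the number of leaves. This loses at most a factor of $2$ per level compared with $\log_2$ of the leaf count, so the resulting uniform tree satisfies $2^{\labs{L}}\gtrsim \labs{A}\,2^{-(\text{loss})}$. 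We expect this to need care, and describe it below.

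Given such a regular $S$ with $\labs{L}\geq cn$, Szemer\'{e}di's theorem applied to $L\subset\{1,\dots,n\}$ (density $c$) yields, for $n\geq N(k+1,c)$, an arithmetic progression $\ell_0<\ell_1<\dots<\ell_k$ in $L$ with common difference $d$. We then build $\phi:T_k\to S$ inductively. Send the root to a vertex at level $\ell_0$. If $\phi(x)$ sits at level $\ell_i$, then since $\ell_i\in L$ it splits, and we follow each child uniquely or arbitrarily down to level $\ell_{i+1}$, mapping the two children of $x$ there. Conditions (i) and (ii) of Definition~\ref{Def: Replica of T_k} hold by construction, and the signature $\ell_0,\dots,\ell_k$ is arithmetic. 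Since $S\subset T_A$, this is an arithmetic replica in $T_A$.

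The main obstacle is the regularisation step, since an arbitrary $T_A$ need not have uniform splitting levels. To handle it, we would use the standard argument that any $A$ with $\labs{A}\geq 2^{\alpha n}$ contains $A'$ for which $T_{A'}$ is level-uniform with at least $\alpha' n$ splitting levels. We would prove this by induction on $n$. Write $A=A_0\sqcup A_1$ according to the first letter. Either both parts have size at least $\labs{A}/n^{2}$-ish, in which case we recurse on both and then need to align their splitting sets, or one part carries almost everything and we recurse on it alone. The alignment issue is the delicate point. Our remedy is to prune instead to a subtree in which each vertex's number of leaf descendants depends only on its level, via dyadic pigeonholing of the counts $\labs{A\cap \text{subtree}(x)}$ at each level. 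This costs a factor $n^{O(1)}$ per level overall, i.e.\ $2^{O(\log n)\cdot(\text{depth})}$. That is too much if done at all $n$ levels, so we would instead pigeonhole on blocks of $\sqrt n$ levels, or accept a weaker but still linear $\alpha'$. Should this prove too lossy, the fallback is to use the density version, Theorem~\ref{Th:FW Density}: mark the splitting vertices of a suitably weighted (mass-distribution) subtree and check that they have positive density $d(H)$ in the sense defined above.
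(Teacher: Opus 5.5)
\textbf{There is a genuine gap: the regularisation step is the whole content of the theorem, and it is not carried out.}

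Your final step is sound and matches the paper. The paper obtains this statement as the case $s=b=2$ of Theorem~\ref{Thm: Mixed arity FW}: it first finds a replica of $T_{\lfloor cn\rfloor}$ in $T_A$, which is exactly your level-uniform subtree $S$ with $\labs{L}\geq cn$, then applies Szemer\'edi's theorem to its signature and prunes. But you never produce $S$, and each of your proposed routes fails.
\begin{enumerate}[(i)]
\item The greedy step is not well defined: ``keep both children of all vertices'' is available only if every vertex at that level splits. Even granting it, losing a factor $2$ per level compounds to $2^{-n}$. Since $\alpha\leq 1$, this swamps $\labs{A}\geq 2^{\alpha n}$, so $2^{\labs{L}}\gtrsim\labs{A}2^{-(\text{loss})}$ says nothing.
\item No cheap loss bound is possible anyway. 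If $A$ is the set of words with at most $cn$ ones ($c<1/2$), then $\labs{A}\approx 2^{H(c)n}$, with $H$ the binary entropy in bits. Yet every uniform subtree has $\labs{L}\leq cn$: follow the child $1$ at every level of $L$ to reach a leaf of $A$. So the loss is genuinely exponential.
\item Your induction on the first letter stops exactly at aligning the two halves' splitting sets.
\item Per-level pigeonholing is too costly, as you concede. Block pigeonholing synchronises only block boundaries, while splits inside a block occur at vertex-dependent depths.
\item The fallback to Theorem~\ref{Th:FW Density} leaves unspecified which set $H$ to use and why $d(H)$ is bounded below. The splitting vertices of $T_A$ have density tending to $0$ when all of $A$ lies below a single vertex at level $(1-\alpha)n$.
\end{enumerate}

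\textbf{The missing idea} is to stop searching for one good signature and instead count all realisable ones. In your first-letter induction, put $X=\mathrm{BrSig}_2(A_0)$ and $Y=\mathrm{BrSig}_2(A_1)$.
\begin{enumerate}[(a)]
\item Every $\tau\in X\cup Y$ gives $\tau^+\in\mathrm{BrSig}_2(A)$.
\item Every $\tau\in X\cap Y$ also gives $\tau^+\cup\{0\}$, by joining the two replicas through the root. Their terminal vertices can be taken at level $n$, so nothing needs aligning.
\item Hence $\labs{\mathrm{BrSig}_2(A)}\geq\labs{X\cup Y}+\labs{X\cap Y}=\labs{X}+\labs{Y}\geq\labs{A_0}+\labs{A_1}=\labs{A}$. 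This is Lemma~\ref{Lemma: Sig counting lemma} with $s=b=2$.
\item Suppose $T_A$ had no replica of $T_m$ with $m=\lfloor cn\rfloor$. Then every realised signature has fewer than $m$ elements, so $\labs{A}\leq\sum_{r<m}\binom{n}{r}\leq(n+1)2^{H(c)n}$. This contradicts $\labs{A}\geq 2^{\alpha n}$ once $H(c)<\alpha$ and $n$ is large, as in Proposition~\ref{Prop: Linear length 1d replicas}.
\end{enumerate}
This supplies your subtree $S$, and your Szemer\'edi step then finishes the proof.
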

The original proofs of both Theorem~\ref{Th:FW Density} and Theorem~\ref{Th:FW Leaf} used the ergodic theory of Markov processes. A simpler proof of Theorem~\ref{Th:FW Density} via purely combinatorial methods was later found by Pach, Solymosi and Tardos in \cite{PST}, where they also gave a short argument which deduced the leaf-generated result from the density result, although we stress that each of these proofs relies on either Szemer\'{e}di's theorem or its equivalent ergodic formulation in order to ensure that the replica of $T_k$ obtained is arithmetic.

In \cite{BF}, Bulinski and Fish proved the existence of certain arithmetic product-tree structures in positive-density subsets of products of trees, generalising Theorem~\ref{Th:FW Density} in such a way that their result implies the multidimensional Szemer\'{e}di's theorem, just as Theorem~\ref{Th:FW Density} implies the one-dimensional Szemer\'{e}di's theorem. The purpose of this article is to look for generalisations of the leaf-generated result to the setting of products of trees.

The precise configurations we will look for in the product setting are best motivated by the following elementary fact about leaf-generated subtrees in one dimension. For a set $A\subset \partial T_n$, we say that a non-leaf vertex $x \in T_A$ \textit{splits} whenever $T_A$ contains both children of $x$. Then as long as $A$ is non-empty we have that
\begin{equation}\label{eq: Baby splitting lemma}
T_A \text{ contains exactly }\labs{A}-1 \text{ many vertices that split.}
\end{equation}
Notice that splitting vertices in $T_A$ are exactly roots of replicas of $T_1$ in $T_A$. If we build a new leaf set $A' \subset \partial T_{n'}$ consisting of splitting vertices of $T_A$ at some common level $1\leq n'<n$, then splitting vertices in $T_{A'}$ correspond to roots of replicas of $T_2$ in $T_A$, and so on.

In just the same way that replicas are built out of nested generations of splitting vertices in one dimension, the configurations we seek in the higher dimensional setting of products of trees will be built out of nested generations of product vertices that split in an appropriate higher dimensional sense.

\subsection{Dyadic rectangles}
Our exact notion of splitting for product vertices is in fact very geometric, and this is best explained in reference to the product of two binary trees
\[ \Gamma_{n,m} = T_n \times T_m.\]
This example retains a sufficient amount of the complexity of the general case so as to demonstrate the key ideas, and more importantly it can be realised geometrically using dyadic partitions of the unit square.

The product $\Gamma_{n,m}$ is naturally identified with the collection $\mathcal{R}_{\leq n, \leq m}$ consisting of all dyadic rectangles in the unit square $[0,1)^2$ of width at least $2^{-n}$ and height at least $2^{-m}$, with the product order corresponding to containment of rectangles.

Write $\mathcal R_{i,j}$ for the set of all dyadic rectangles in $[0,1)^2$ of size $2^{-i}\times2^{-j}$ and set
\[\mathcal R_{\leq n,\leq m}:=\bigcup_{i=0}^n\bigcup_{j=0}^m\mathcal R_{i,j}.\]
A leaf set $A\subset\partial\Gamma_{n,m}$ may therefore be viewed as a collection of terminal rectangles $A\subset\mathcal R_{n,m}$, which in turn can be realised geometrically as
\[A_{\mathrm{geom}}:=\bigcup_{R\in A}R\subset[0,1)^2.\]
For $E\subset[0,1)^2$, let
\[\mathrm{occ}_{i,j}(E):=\{R\in\mathcal R_{i,j}:R\cap E\neq\emptyset\}.\]
Under the preceding identification, the ancestor closure of $A$ is then exactly
\begin{equation} \label{eq: Ancestor closure is occupied recs}
\bigcup_{i=0}^n\bigcup_{j=0}^m\mathrm{occ}_{i,j}(A_{\mathrm{geom}}).
\end{equation}
Our definition of splitting in the product tree $\Gamma_{n,m}$ can then be given in the following geometric form.
\begin{defn}
A leaf set $A\subset\mathcal R_{n,m}$ is said to \textit{split} at a dyadic rectangle $R\in\mathcal R_{i,j}$, where $i<n$ and $j<m$, if all four dyadic quadrants of $R$ are occupied by $A_{\mathrm{geom}}$. The set of all such rectangles at which $A$ splits is denoted\footnote{The subscript in $\Split_2^{(2)}(A)$ refers to dimension and the superscript refers to the fact that we are working in base $2$.} by $\Split_2^{(2)}(A)$.
\end{defn}
To search for a product analogue of \eqref{eq: Baby splitting lemma} is then to seek an answer to the following question.
\begin{qu}\label{Qu: 2d splits}
How large must a set $A\subset\mathcal R_{n,m}$ be to ensure that $\Split_2^{(2)}(A)$ is non-empty?
\end{qu}

It is not hard to see that unlike splittings in one dimension, in the product case, the size of a configuration $A\subset \mathcal{R}_{n,m}$ does not uniquely determine the size of $\Split_2^{(2)}(A)$, and so the correct analogue of fact \eqref{eq: Baby splitting lemma} must take the form of an inequality rather than an equality.

Indeed, an exact answer to Question~\ref{Qu: 2d splits} is given by the inequality
\begin{equation}\label{eq: 2d Splitting inequal}
\labs{\Split_2^{(2)}(A)} \geq \labs{A} - (2^n+2^m -1)
\end{equation}
which holds for all $A\subset \mathcal{R}_{n,m}$ and all scales $n,m\geq 1$. Moreover, the bound is sharp in the sense that there exist splitting-free leaf sets of cardinality $2^n+2^m-1$. These extremal examples are given in Section~\ref{Sec: general case} in the setting of a more general weighted splitting inequality which holds for products of $b$-ary trees.

Given a leaf set $A\subset\mathcal R_{n,m}$, the configurations we seek consist of a sequence of coordinatewise increasing scale pairs
\begin{equation}\label{eq: Signature of replica in product tree}
(i_0,j_0)<(i_1,j_1)<\cdots<(i_k,j_k)
\end{equation}
and for each $0\leq r \leq k$ a collection $\mathcal J_r\subset\mathrm{occ}_{i_r,j_r}(A_{\mathrm{geom}})$ of size $\labs{\mathcal J_r}=4^r$, such that for each $0\leq r < k$, every rectangle in $\mathcal J_r$ contains four members of $\mathcal J_{r+1}$, one in each quadrant. In the language of trees, such a configuration is called a \textit{replica of $T_k^{(4)}$ inside $\Gamma_A$}, where $T_k^{(4)}$ is the full $4$-ary tree of height $k$, and the sequence of scale pairs in \eqref{eq: Signature of replica in product tree} is called the \textit{signature} of the replica.

The same definition, with $A_{\mathrm{geom}}$ replaced by an arbitrary set $E\subset[0,1)^2$, gives a \textit{dyadic branching pattern of order $k$ in $E$}. A replica or branching pattern is arithmetic if its signature is an arithmetic progression in $\Z^2$.

\begin{figure}[htbp]
    \centering

%
\begingroup%
\providecommand{\dyadicshowcuts}{1}%
%
\colorlet{dyadicJzeroFill}{black!8}%
\colorlet{dyadicJzeroEdge}{black!75}%
\colorlet{dyadicJoneFill}{black!30}%
\colorlet{dyadicJoneEdge}{black!85}%
\colorlet{dyadicJtwoFill}{black!60}%
\colorlet{dyadicJtwoEdge}{black}%
\begin{tikzpicture}[
    x=0.17cm, y=0.17cm,
    fine grid/.style={draw=black!40, line width=0.18pt},
    domain edge/.style={draw=black!85, line width=0.55pt},
    root box/.style={draw=dyadicJzeroEdge, line width=0.9pt},
    parent box/.style={draw=dyadicJoneEdge, line width=0.8pt},
    final box/.style={draw=dyadicJtwoEdge, line width=0.6pt},
    quadrant cut/.style={draw=black!65, line width=0.4pt,
                         dash pattern=on 1.7pt off 1.4pt},
    every node/.style={font=\footnotesize, inner sep=1pt}
]

\def\dyadicparents{
    0/0, 24/4, 8/12, 16/8}
\def\dyadicchildren{
    2/0, 4/1, 0/3, 6/2,
    24/5, 30/4, 26/6, 28/7,
    8/12, 14/13, 10/15, 12/14,
    18/9, 20/8, 16/10, 22/11}
\def\dyadicleaves{
    3/0, 4/1, 1/3, 6/2,
    25/5, 30/4, 26/6, 29/7,
    8/12, 15/13, 11/15, 12/14,
    18/9, 21/8, 17/10, 22/11}

\def\dyadicgrid{%
    \foreach \t in {0,...,32} {
        \draw[fine grid] (\t,0) -- (\t,32);
        \draw[fine grid] (0,\t) -- (32,\t);
    }
    \draw[domain edge] (0,0) rectangle (32,32);
    \node[anchor=north] at (0,-1) {$0$};
    \node[anchor=north] at (16,-1) {$\frac12$};
    \node[anchor=north] at (32,-1) {$1$};
    \node[anchor=east] at (-1,16) {$\frac12$};
    \node[anchor=east] at (-1,32) {$1$};
}

\begin{scope}
    \foreach \x/\y in \dyadicleaves
        \fill[black!85] (\x,\y) rectangle ++(1,1);
    \dyadicgrid
    \node[font=\small, anchor=south] at (16,34)
        {\textbf{(a)} $A_{\text{geom}}$};
    \node at (16,-5.5) {$A\subset\partial\Gamma_{5,5}$};
\end{scope}

\begin{scope}[shift={(38,0)}]
    \fill[dyadicJzeroFill] (0,0) rectangle (32,16);
    \foreach \x/\y in \dyadicparents
        \fill[dyadicJoneFill] (\x,\y) rectangle ++(8,4);
    \foreach \x/\y in \dyadicchildren
        \fill[dyadicJtwoFill] (\x,\y) rectangle ++(2,1);
    \dyadicgrid

    \ifnum\dyadicshowcuts=1
        \draw[quadrant cut] (16,0) -- (16,16);
        \draw[quadrant cut] (0,8) -- (32,8);
        \foreach \x/\y in \dyadicparents {
            \draw[quadrant cut] (\x+4,\y) -- (\x+4,\y+4);
            \draw[quadrant cut] (\x,\y+2) -- (\x+8,\y+2);
        }
    \fi

    \draw[root box] (0,0) rectangle (32,16);
    \foreach \x/\y in \dyadicparents
        \draw[parent box] (\x,\y) rectangle ++(8,4);
    \foreach \x/\y in \dyadicchildren
        \draw[final box] (\x,\y) rectangle ++(2,1);

    \node[font=\small, anchor=south] at (16,34)
        {\textbf{(b)} Nested dyadic splitting pattern};
    \draw[root box, fill=dyadicJzeroFill]
        (2,-6) rectangle ++(2,1);
    \node[anchor=west] at (5,-5.5) {$\mathcal J_0$};
    \draw[parent box, fill=dyadicJoneFill]
        (12,-6) rectangle ++(2,1);
    \node[anchor=west] at (15,-5.5) {$\mathcal J_1$};
    \draw[final box, fill=dyadicJtwoFill]
        (22,-6) rectangle ++(2,1);
    \node[anchor=west] at (25,-5.5) {$\mathcal J_2$};
\end{scope}

\node at (35,-10)
    {$(i_0,j_0)=(0,1),\qquad
      (i_1,j_1)=(2,3),\qquad
      (i_2,j_2)=(4,5)$};
\end{tikzpicture}%
\endgroup%
\caption{Left: $A_{\text{geom}}$ for a leaf set
$A\subset\partial\Gamma_{5,5}$.
Right: an arithmetic dyadic branching pattern of order $2$,
with $\mathcal J_0$, $\mathcal J_1$, and $\mathcal J_2$
shaded from light to dark.}

    \label{fig:dyadic-branching-pattern}
\end{figure}

Our first main result is then the following binary planar analogue of Theorem~\ref{Th:FW Leaf}.
\begin{thmAlph}\label{Theorem: Arith replicas in Binary Product Tree}
For every integer $k\geq 1$ and every $\alpha>1$, there exists $N=N(k,\alpha)$ such that, whenever $n\geq N$ and $A\subset\partial\Gamma_{n,n}$ satisfies $\labs{A}\geq2^{\alpha n}$, the ancestor closure $\Gamma_A$ contains an arithmetic replica of $T_k^{(4)}$.
\end{thmAlph}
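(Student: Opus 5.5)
We follow the Pach--Solymosi--Tardos reduction of Theorem~\ref{Th:FW Leaf} to Theorem~\ref{Th:FW Density}, with the splitting inequality \eqref{eq: 2d Splitting inequal} playing the role of \eqref{eq: Baby splitting lemma}. The plan has three parts: turn the leaf count $\labs{A}\geq 2^{\alpha n}$ into positive density of splitting rectangles along the diagonal scales $(i,i)$; organise these rectangles into a one-dimensional $4$-ary tree indexed by diagonal scales; and then apply a $4$-ary version of the Furstenberg--Weiss density theorem, whose proof via Szemer\'edi's theorem carries over verbatim from the binary case, to obtain an arithmetic replica.

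\textbf{Step 1 (local splitting at many scales).} For a rectangle $R\in\mathcal R_{i,i}$, write $A_R$ for the set of leaves of $A$ below $R$. Rescaling $R$ to the unit square and applying \eqref{eq: 2d Splitting inequal} at the relative scale $(n-i,n-i)$ gives
\[ \labs{\Split_2^{(2)}(A_R)} \geq \labs{A_R} - 2^{n-i+1}. \]
Since $\alpha>1$, the leaf count beats the one-dimensional threshold $2\cdot 2^{n}$ by the factor $2^{(\alpha-1)n}$. More importantly, a good $R$ supports many splitting rectangles inside itself. The goal is a weighted statement: if $\mu$ is the normalised uniform measure on $A$ pushed up the diagonal scales, then for a positive proportion (depending on $\alpha$) of levels $i\leq n$, the $\mu$-mass of rectangles in $\mathcal R_{i,i}$ at which $A$ splits is bounded below. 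To get this, I would use a pigeonhole/entropy argument on the sequence $N_i=\labs{\mathrm{occ}_{i,i}(A_{\mathrm{geom}})}$. We have $N_n=\labs{A}\geq 2^{\alpha n}$, and $N_{i+1}\leq 4N_i$. A rectangle that does not split has at most $3$ occupied quadrants. Among those, the rectangles with exactly $3$ occupied quadrants are controlled by the sharp inequality, since a splitting-free configuration has only about $2\cdot 2^{\text{scale}}$ leaves, i.e.\ growth rate $\log_2 2=1$ per level against a maximum of $2$. Hence growth exponent $\alpha>1$ forces a proportion of at least about $(\alpha-1)$ of the levels to be levels where a $\mu$-positive fraction of rectangles split.

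\textbf{Step 2 (the auxiliary $4$-ary tree).} For each diagonal rectangle $R\in\mathcal R_{i,i}$, the four quadrants of $R$ are exactly the four diagonal children in $\Gamma_{n,n}$. The diagonal rectangles $\bigcup_i\mathcal R_{i,i}$ therefore form a full $4$-ary tree $T^{(4)}_n$. Let $H\subset T^{(4)}_n$ be the set of rectangles at which $A$ splits. A replica of $T^{(4)}_k$ inside $H$, in the sense of Definition~\ref{Def: Replica of T_k} adapted to $4$-ary trees, is an arithmetic replica in $\Gamma_A$: each vertex maps into $\Gamma_A$, the four children go below the four distinct diagonal children, and the signature $(i_r,i_r)$ is arithmetic in $\Z^2$. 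The uniform density $d(H)$ need not be positive, because $A$ is sparse in $T^{(4)}_n$. We therefore pass, as in \cite{PST}, to the $\mu$-weighted density, or equivalently to a subtree.

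\textbf{Step 3 (reduction to density).} Here we imitate the PST trick. We build a $4$-ary tree $S$ by recursively following only the occupied structure and choosing, at non-splitting rectangles, a single occupied descendant path. Step~1 should then show that a positive fraction of the levels of $S$ consist of genuine splits. Applying the $4$-ary density theorem to $S$ yields an arithmetic replica, and lifting it back to $\Gamma_A$ preserves arithmeticity, since the levels of $S$ correspond to diagonal scales via a fixed map. The main obstacle is this last step. The PST reduction in one dimension uses the exact equality \eqref{eq: Baby splitting lemma} to embed $T_A$ into a full binary tree with preserved levels. Here we only have an inequality, and non-split rectangles can have $2$ or $3$ occupied quadrants. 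My first attempt would be a density-increment-free averaging: define $\mu$ by splitting mass equally among the occupied quadrants. Then show that $\mu$-typical leaves meet splitting rectangles at a positive fraction of levels, using $-\log\mu(\text{leaf})\approx\alpha n\cdot\log 2$ together with the fact that non-split steps contribute at most $\log 3$, and on average less by the sharp inequality. This would give positive $\mu$-density and allow a weighted version of Theorem~\ref{Th:FW Density}.
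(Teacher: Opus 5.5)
Your proposal has a genuine gap, and it is already in Step~1. The claim that $\alpha>1$ forces splitting rectangles at a positive proportion of the \emph{diagonal} scales $(i,i)$ is false. Take
\[ A=\{(x,y)\in\{0,1\}^n\times\{0,1\}^n : (x_j,y_j)\neq(1,1)\text{ for every } j\}. \]
Then $\labs{A}=3^n=2^{n\log_2 3}$. However, no square $(u,v)$ with $\labs{u}=\labs{v}=i<n$ splits, because no leaf of $A$ lies below its diagonal child $(u1,v1)$. So your set $H$ is empty for every $\alpha\in(1,\log_2 3]$. Nothing routed through $H$ can produce the replica: not a weighted density theorem, not the auxiliary tree $S$, not anything else.

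The faulty step is your sentence about rectangles with three occupied quadrants. A non-splitting diagonal square may have three occupied quadrants at every step, so split-free growth along the diagonal $4$-ary tree is $\log_2 3$ bits per level, not $1$. The bound $2^n+2^m-1$ in \eqref{eq: 2d Splitting inequal} concerns sets with no split at \emph{any} level pair $(i,j)$. The splits it does produce, including those you get inside each $R$ by rescaling, sit at uncontrolled, generally off-diagonal and vertex-dependent levels. In the example they occur, for instance, at levels $(i,i+1)$.

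Separately, the entropy heuristic in Step~3 also fails. Consider the measure $\mu$ that splits mass equally among occupied quadrants. Take a spine of length about $\log_2 n$ that sheds a single-path branch at each step and then ends in a full $4$-ary subtree. Then $\labs{A}\geq 4^{n-O(\log n)}$, yet all but $1/n$ of the $\mu$-mass lies on thin branches with $-\log_2\mu(\text{leaf})=O(\log n)$. So $\mu$-typical leaves need not meet splits at a positive fraction of levels.

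What is missing is a mechanism that does two things: (a) locates genuine product splits within a bounded offset of a common scale, and (b) synchronises these offsets. Merely adding shifted diagonals $(i,i+c)$ to $H$ does not achieve this, since the offset can vary from vertex to vertex.

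The paper achieves (a) by blocking:
\begin{enumerate}[(1)]
\item Choose $q$ with $\log_2(2^{q+1}-1)/q<\alpha$, and view the $q$-synchronous levels of $\Gamma_A$ as a leaf-generated subtree of a $4^q$-ary tree.
\item Apply the mixed-arity leaf theorem, Theorem~\ref{Thm: Mixed arity FW}, to get an arithmetic replica of the $2^{q+1}$-ary tree. That theorem is proved from the signature count of Lemma~\ref{Lemma: Sig counting lemma}, an entropy estimate, and Szemer\'edi's theorem.
\item Apply \eqref{eq: 2d Splitting inequal} to the $2^{q+1}$ block letters below each replica vertex. This yields a split at a relative offset $(a_z,b_z)\in\{0,\dots,q-1\}^2$ inside the next block (Lemma~\ref{Lemma: Block branching gives local splitting}).
\end{enumerate}
For (b), the paper colours the resulting nearly-arithmetic $4$-ary tree by $(a_z,b_z)$ and uses Theorem~\ref{Thm: Equal arity colouring} to make the offset constant. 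This gives signature $\mathbf v+tQ(1,1)$ with $\mathbf v$ typically off the diagonal, which is exactly what the example above requires.
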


By realising the leaf sets in Theorem~\ref{Theorem: Arith replicas in Binary Product Tree} as the set of dyadic rectangles occupied by a set $E\subset [0,1)^2$ of sufficiently large fractal dimension, we immediately obtain the following geometric consequence.

\begin{corAlph}
Every set $E\subset[0,1)^2$ with upper Minkowski dimension
\[\overline{\dim}_M(E)=\limsup_{n\to\infty}\frac{\log_2\labs{\mathrm{occ}_{n,n}(E)}}{n}>1\] contains arithmetic dyadic branching patterns of arbitrary finite order.
\end{corAlph}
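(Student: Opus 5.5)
The statement immediately above is the Corollary, so the plan is to reduce it directly to Theorem~\ref{Theorem: Arith replicas in Binary Product Tree}. Fix $k\geq 1$ and choose $\alpha$ with $1<\alpha<\overline{\dim}_M(E)$. By the definition of upper Minkowski dimension as a $\limsup$, there are infinitely many $n$ with $\labs{\mathrm{occ}_{n,n}(E)}\geq 2^{\alpha n}$. Since this sequence of $n$ is unbounded, we can choose such an $n$ with $n\geq N(k,\alpha)$, where $N(k,\alpha)$ is the threshold from Theorem~\ref{Theorem: Arith replicas in Binary Product Tree}.

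Set $A:=\mathrm{occ}_{n,n}(E)\subset\mathcal R_{n,n}=\partial\Gamma_{n,n}$. We have $\labs{A}\geq 2^{\alpha n}$, so Theorem~\ref{Theorem: Arith replicas in Binary Product Tree} gives an arithmetic replica of $T_k^{(4)}$ inside $\Gamma_A$. Concretely, this is an arithmetic signature $(i_0,j_0)<\cdots<(i_k,j_k)$ together with families $\mathcal J_r\subset \mathrm{occ}_{i_r,j_r}(A_{\mathrm{geom}})$ of size $4^r$, such that each rectangle of $\mathcal J_r$ contains one member of $\mathcal J_{r+1}$ in each of its quadrants. Here I use the identification \eqref{eq: Ancestor closure is occupied recs} of the ancestor closure with the occupied rectangles.

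It remains to transfer occupancy from $A_{\mathrm{geom}}$ to $E$. For $i,j\leq n$ we claim $\mathrm{occ}_{i,j}(A_{\mathrm{geom}})=\mathrm{occ}_{i,j}(E)$.
\begin{itemize}
\item If a dyadic rectangle $R\in\mathcal R_{i,j}$ meets $A_{\mathrm{geom}}$, then it meets some $Q\in A$. Since dyadic rectangles at finer scales are nested, $Q\subset R$. As $Q$ meets $E$ by the definition of $A$, so does $R$.
\item Conversely, if $R$ meets $E$ at a point $p$, then the unique $Q\in\mathcal R_{n,n}$ containing $p$ lies in $A$, and $Q\subset R$. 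Hence $R$ meets $A_{\mathrm{geom}}$.
\end{itemize}
Therefore the same families $\mathcal J_r$ form an arithmetic dyadic branching pattern of order $k$ in $E$. Since $k$ was arbitrary, this proves the corollary.

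There is no real obstacle here. The only points needing care are choosing $\alpha$ strictly between $1$ and the dimension, so that the hypothesis of the theorem holds along a subsequence, and checking the nesting of dyadic rectangles used in the occupancy identity.
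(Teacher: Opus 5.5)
Your proposal is correct and follows the paper's own argument: the paper obtains the corollary ``immediately'' by taking $A=\mathrm{occ}_{n,n}(E)$ for some large $n$ along the $\limsup$ and applying Theorem~\ref{Theorem: Arith replicas in Binary Product Tree}. Your check that $\mathrm{occ}_{i,j}(A_{\mathrm{geom}})=\mathrm{occ}_{i,j}(E)$ for $i,j\leq n$, using the nesting of dyadic rectangles, spells out a transfer step that the paper leaves implicit.
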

We prove Theorem~\ref{Theorem: Arith replicas in Binary Product Tree} by combining our splitting inequality from \eqref{eq: 2d Splitting inequal} with one-dimensional Ramsey results for trees. The key idea is to encode $\Gamma_A$ inside a one-dimensional tree with a high branching number by grouping consecutive scales of $\Gamma_{n,m}$ into blocks. By first establishing a high-arity extension of the leaf-generated Furstenberg--Weiss theorem, we can then obtain an arithmetic one-dimensional replica of large arity inside the one-dimensional block tree that encodes $\Gamma_A$. The splitting inequality then converts this high intermediate arity into genuine local four-way product splittings inside $\Gamma_A$. A colouring version of Theorem~\ref{Th:FW Density} then synchronises the scales of these local splittings to produce the required arithmetic replica inside $\Gamma_A$.
\subsection{The general case}
The same configurations can be considered in $b$-adic partitions of
$[0,1)^d$. Dyadic rectangles are replaced by $b$-adic boxes, and their
four quadrants by the $b^d$ subboxes obtained by dividing each side
into $b$ equal parts. In tree language, the ambient object is the
product of $d$ finite $b$-ary trees
\[
\Gamma_{\mathbf n}^{(b)}
:=
T_{n_1}^{(b)}\times\cdots\times T_{n_d}^{(b)},
\qquad
\mathbf n=(n_1,\ldots,n_d)\in\Z_{\geq0}^d.
\]
As before, $\Gamma_A$ denotes the coordinatewise ancestor closure
of a terminal leaf set $A$. Replicas again consist of generations
at common coordinate levels, now branching through all $b^d$
diagonal children, and are arithmetic when their sequence of levels
forms an arithmetic progression in $\Z^d$. The precise definitions
are given in Section~\ref{Sec: general case}.
In this setting, the critical leaf-growth exponent is
\begin{equation}\label{eq: Critical growth rate}
\alpha_{\mathrm{crit}}(d,b):=d-1+\log_b(b-1).
\end{equation}

\begin{thmAlph}\label{Thm: Arith replicas in general leaf generated product trees}
Let $b\geq2$ and $d,k\geq1$ be integers, and let
$\alpha>\alpha_{\mathrm{crit}}(d,b)$.
There exists $N=N(b,d,k,\alpha)$ such that, whenever $n\geq N$ and
\[
A\subset\partial\Gamma_{(n,\ldots,n)}^{(b)},
\qquad
\labs{A}\geq b^{\alpha n},
\]
the ancestor closure $\Gamma_A$ contains an arithmetic replica of
$T_k^{(b^d)}$.
\end{thmAlph}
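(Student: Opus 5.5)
We follow the strategy sketched after Theorem~\ref{Theorem: Arith replicas in Binary Product Tree}, now in the $b$-adic, $d$-dimensional setting.

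\textbf{Step 1: a weighted splitting inequality.} We first prove a general analogue of \eqref{eq: 2d Splitting inequal}. For $A\subset\partial\Gamma^{(b)}_{\mathbf m}$ with $\mathbf m=(m,\ldots,m)$, let $\Split^{(b)}_d(A)$ be the set of boxes all $b^d$ of whose diagonal children are occupied. The target is a bound of the form
\[
\labs{\Split^{(b)}_d(A)}\geq c\,\labs{A}-C\, b^{\alpha_{\mathrm{crit}}(d,b)m},
\]
with constants depending only on $b,d$. We would prove it by induction on $m$, passing one diagonal level at a time. Fix a box that does not split. Then at least one of its $b^d$ diagonal children is empty, so its occupied children lie in a set of size at most $b^d-1$. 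A product set of the form ``$b-1$ values in one coordinate, $b$ in the others'' gives the worst-case growth $(b-1)b^{d-1}$ per level. Taking $\log_b$ of this growth rate recovers $\alpha_{\mathrm{crit}}$. To make the induction close, one assigns weights to partially occupied boxes, for instance via a potential that counts the coordinate directions in which the box is ``thin''. This weighted potential is the content of the ``weighted splitting inequality'' promised in Section~\ref{Sec: general case}.

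\textbf{Step 2: block encoding.} Fix a block length $L$, large depending on $\alpha-\alpha_{\mathrm{crit}}$. Group diagonal levels into consecutive blocks of length $L$. This turns the ancestor closure $\Gamma_A$ into a one-dimensional $b^{dL}$-ary tree $T'$ of height $n/L$, whose leaf set $A'$ still has $\labs{A'}\geq (b^{dL})^{\alpha' (n/L)}$ for a suitable $\alpha'$.

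\textbf{Step 3: a high-arity leaf theorem.} We prove a leaf-generated Furstenberg--Weiss theorem for $q$-ary trees. Given a set $A'$ of leaves of exponent $\beta$, it should produce an arithmetic replica of a $Q$-ary tree with $Q\approx q^{\beta'}$. Here replica means each vertex has $Q$ images lying below distinct children. The natural proof is the Pach--Solymosi--Tardos reduction of Theorem~\ref{Th:FW Leaf} to Theorem~\ref{Th:FW Density}, applied to the $q$-ary analogue. We choose $\beta'$ so that $Q> b^{\alpha_{\mathrm{crit}}L}\cdot C'$.

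\textbf{Step 4: converting arity to local splittings.} Inside each block node of the replica we have $Q$ occupied descendants at the block's bottom level. These form a leaf set of a block subproduct of size exceeding the threshold in Step 1. Hence the block contains a diagonal split at some level offset $s\in\{0,\ldots,L-1\}$, and in fact many such splits.

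\textbf{Step 5: synchronising the offsets.} Colour each vertex of the replica by its offset $s$, and more generally by the offset-pattern needed to chain splittings. Apply the colouring version of Theorem~\ref{Th:FW Density} to find a monochromatic arithmetic replica. A common offset $s$ in every block of an arithmetic block-progression gives an arithmetic signature in $\Z^d$ along the diagonal. This yields the required replica of $T^{(b^d)}_k$.

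\textbf{Main obstacle.} Step 4 is the delicate part. A split found inside a block must have its $b^d$ children lead to the next generation's chosen splits, not just to arbitrary occupied boxes. So the splitting inequality must be applied in a form that produces many splits, and one must then select one per block consistently. We would first try to iterate Step 4: the inequality yields splits whose children subtrees each still carry many leaves. Step 1 already makes the splitting inequality sharp, so it is settled once established; the consistent selection here is where the real work lies.
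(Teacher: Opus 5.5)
Your outline has the same architecture as the paper: block encoding, a high-arity leaf theorem in the block tree, a split-forcing threshold inside each block, and colouring to synchronise offsets. However, the two load-bearing steps are not carried out, and Step~1 is stated in a form that is false for $b\geq 3$.

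\textbf{Step 1.} The bound already fails for $d=1$. Take $m$ even and let $A\subset\partial T_m^{(b)}$ consist of the words whose last $m/2$ letters avoid $b-1$. Then $\labs{A}=b^{m/2}(b-1)^{m/2}$, but only $(b^{m/2}-1)/(b-1)$ vertices split, so $c\labs{A}-C(b-1)^m$ eventually exceeds $\labs{\Split_1^{(b)}(A)}$. Taking a product with full trees in the remaining coordinates gives the same failure for every $d$.

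Your heuristic does not close either. A non-splitting box may have $b^d-1$ occupied diagonal children, and the forced splits can sit at off-diagonal relative levels. So there is no local per-diagonal-level growth bound of $(b-1)b^{d-1}$ to induct on.

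What is actually needed is only the threshold statement: $\labs{A}>C_{d,b}(\mathbf{n})=\prod_i b^{n_i}-\prod_i(b^{n_i}-(b-1)^{n_i})$ forces $\Split_d^{(b)}(A)\neq\emptyset$, together with $b^{q\alpha_{\mathrm{crit}}(d,b)}\leq C_{d,b}(q,\ldots,q)\leq d\,b^{q\alpha_{\mathrm{crit}}(d,b)}$. The paper obtains this from the \emph{weighted} count $\sum_{\mathbf{u}\in\Split_d^{(b)}(A)}\prod_i(b-1)^{n_i-\labs{u_i}-1}\geq\labs{A}-C_{d,b}(\mathbf{n})$. It is proved by induction on the dimension $d$, not the height.
\begin{itemize}
\item The base case telescopes $f_i\geq a_{i+1}-(b-1)a_i$ over levels, where $a_i$ and $f_i$ count occupied and splitting vertices at level $i$.
\item The inductive step slices $A$ into last-coordinate fibres $A_{\mathbf{x}}$ and sets $E_v=\{\mathbf{x}: v\in\Split_1^{(b)}(A_{\mathbf{x}})\}$. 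It then uses that $\mathbf{u}\in\Split_{d-1}^{(b)}(E_v)$ forces $(\mathbf{u},v)\in\Split_d^{(b)}(A)$.
\end{itemize}
Also, write $q$ for your block length $L$, as the paper does. The split produced in a block sits at a relative level $\mathbf{a}\in\{0,\ldots,q-1\}^d$, not at a scalar offset $s$. So you must colour by the vector $\mathbf{a}$, using $q^d$ colours; the resulting signature $\mathbf{v}+tQ\mathbf{1}$ is still arithmetic.

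\textbf{Step 4 (your ``main obstacle'').} This disappears once you apply the threshold to the right leaf set. Do not use all occupied boxes in a block. Use the set $C_x\subset\Omega_q=\partial\Gamma^{(b)}_{(q,\ldots,q)}$ of block letters through which the arithmetic regular embedding $\Phi\colon T^{(S_q)}_{N+1}\to T_{\tilde A}$ sends the $S_q=C_q+1$ children of $x$.

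A split $\mathbf{u}$ of $\Gamma_{C_x}$ has, by definition, each diagonal child $\mathbf{u}\bm{\epsilon}$ below some $\lambda_{\bm{\epsilon}}\in C_x$, distinct for distinct $\bm{\epsilon}$. Hence $\Theta_q(\Phi(x))\mathbf{u}\in\Split_d^{(b)}(A)$. Moreover, each of its diagonal children lies above $\Theta_q(\Phi(c_{\bm{\epsilon}}))$, where $c_{\bm{\epsilon}}$ is the child of $x$ whose image passes through $\Phi(x)\lambda_{\bm{\epsilon}}$.

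So you build $G\colon T^{(b^d)}_N\to\Split_d^{(b)}(A)$ top-down. Put $G(z)=\Theta_q(\Phi(x_z))\mathbf{u}_z$, send the $b^d$ children of $z$ to the $c_{\bm{\epsilon}}$, and recurse. Each $c_{\bm{\epsilon}}$ carries its own full sub-replica, so one split per node suffices and no ``many splits'' or consistency argument is needed. The extra generation $N+1$ supplies the splits at the leaves of $T^{(b^d)}_N$. Colouring $T^{(b^d)}_N$ by $\mathbf{a}_z$ and composing with a monochromatic arithmetic replica of $T^{(b^d)}_k$ finishes the proof.

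\textbf{Step 3.} This has to be proved in sharp form: for every $Q$ with $\log_{B_q}(Q-1)<\alpha/d$, where $B_q=b^{dq}$ is the block arity. The reason is that $Q$ must exceed $C_q\approx b^{q\alpha_{\mathrm{crit}}}$ while $\alpha$ may be arbitrarily close to $\alpha_{\mathrm{crit}}$. You only assert that a Pach--Solymosi--Tardos-type reduction delivers this. The paper proves it through three ingredients:
\begin{itemize}
\item the mixed-arity signature bound $\labs{A}\leq\sum_{\sigma\in\mathrm{BrSig}_s(A)}(b-s+1)^{\labs{\sigma}}(s-1)^{n-\labs{\sigma}}$;
\item an entropy estimate giving a replica of height $\floor{cn}$;
\item Szemer\'edi's theorem applied to that replica's signature.
\end{itemize}
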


For $b=d=2$, the critical exponent is $1$, recovering
Theorem~\ref{Theorem: Arith replicas in Binary Product Tree}.
The general proof uses the same blocking and synchronisation
argument, with the planar splitting inequality replaced by a
weighted splitting inequality for products of $b$-ary trees.
The exponent $\alpha_{\mathrm{crit}}(d,b)$ is precisely the critical
exponential growth rate needed to force a single full product
split, and so Theorem~\ref{Thm: Arith replicas in general leaf generated product trees} shows that no larger critical exponent is needed
to force arithmetic replicas of every prescribed finite height.

The exponent is sharp. For each $n$, there are leaf sets of cardinality exactly $b^{\alpha_{\mathrm{crit}}(d,b)n}$ whose ancestor
closures do not even contain a single full product splitting. These examples are given in
Section~\ref{Sec: general case}.

As in the plane, Theorem~\ref{Thm: Arith replicas in general leaf generated product trees} has an immediate geometric consequence for fractal subsets of $[0,1)^d$ of sufficiently large dimension. The
corresponding $b$-adic branching patterns are defined just as before:
they consist of nested generations of occupied boxes at common
coordinate scales, with one chosen descendant in each immediate
$b$-adic subbox. A pattern is arithmetic when its sequence of
scales forms an arithmetic progression in $\Z^d$.
\begin{corAlph}
For all integers $b\geq2$ and $d\geq1$, every set
$E\subset[0,1)^d$ with upper Minkowski dimension
\[
\overline{\dim}_M(E)>\alpha_{\mathrm{crit}}(d,b)
\]
contains arithmetic $d$-dimensional $b$-adic branching patterns
of every finite order.
\end{corAlph}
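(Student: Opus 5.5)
The corollary follows from Theorem~\ref{Thm: Arith replicas in general leaf generated product trees} applied to a leaf set built from the boxes that $E$ occupies at a single scale. The proof then just reads the resulting replica back as boxes meeting $E$.

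Fix $b\geq 2$, $d\geq 1$ and an order $k\geq1$. Since $\overline{\dim}_M(E)>\alpha_{\mathrm{crit}}(d,b)$, choose $\alpha$ with $\alpha_{\mathrm{crit}}(d,b)<\alpha<\overline{\dim}_M(E)$. Here the upper Minkowski dimension is computed with $b$-adic cubes:
\[
\overline{\dim}_M(E)=\limsup_{n\to\infty}\frac{\log_b\labs{\mathrm{occ}_{(n,\ldots,n)}(E)}}{n}.
\]
This agrees with the usual definition, because the counts for different bases are comparable up to constant factors and rescaling of $n$. By the definition of $\limsup$ there are infinitely many $n$ with $\labs{\mathrm{occ}_{(n,\ldots,n)}(E)}\geq b^{\alpha n}$. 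Pick such an $n$ with $n\geq N(b,d,k,\alpha)$, where $N$ is given by Theorem~\ref{Thm: Arith replicas in general leaf generated product trees}.

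Let $A:=\mathrm{occ}_{(n,\ldots,n)}(E)$, viewed as a leaf set in $\partial\Gamma^{(b)}_{(n,\ldots,n)}$ through the identification of $b$-adic boxes with product-tree vertices (the $b$-adic analogue of the dyadic identification above). Then $\labs{A}\geq b^{\alpha n}$, so the theorem gives an arithmetic replica of $T_k^{(b^d)}$ inside $\Gamma_A$.

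The remaining step is to check that this replica is an arithmetic branching pattern in $E$. By the $b$-adic analogue of \eqref{eq: Ancestor closure is occupied recs}, every vertex of $\Gamma_A$ at level $\mathbf i\leq(n,\ldots,n)$ is a box meeting $A_{\mathrm{geom}}$. Such a box contains some leaf box of $A$, and every leaf box of $A$ meets $E$. Hence each box in each generation $\mathcal J_r$ of the replica belongs to $\mathrm{occ}_{\mathbf i_r}(E)$.

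The branching condition of the replica says that each vertex in $\mathcal J_r$ has, below each of its $b^d$ diagonal children, one chosen member of $\mathcal J_{r+1}$. Geometrically, each box in $\mathcal J_r$ contains one chosen member of $\mathcal J_{r+1}$ in each of its $b^d$ immediate $b$-adic subboxes. The signature is unchanged by this translation, so it is still an arithmetic progression in $\Z^d$. Thus $E$ contains an arithmetic $b$-adic branching pattern of order $k$, and since $k$ was arbitrary the corollary follows.

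No step here is a real obstacle. The only points needing care are the choice of base $b$ in the Minkowski dimension and the fact that occupied ancestor boxes of $A_{\mathrm{geom}}$ are genuinely occupied by $E$.
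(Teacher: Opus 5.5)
Your proposal is correct and is the argument the paper intends: the paper treats this corollary as an immediate consequence of Theorem~\ref{Thm: Arith replicas in general leaf generated product trees}. You take the leaf set $A=\mathrm{occ}_{(n,\ldots,n)}(E)$ at a scale $n\geq N(b,d,k,\alpha)$ where $\labs{\mathrm{occ}_{(n,\ldots,n)}(E)}\geq b^{\alpha n}$, then read the resulting replica in $\Gamma_A$ back as nested $b$-adic boxes meeting $E$. Your remarks on computing $\overline{\dim}_M$ with the $b$-adic grid, and on ancestor boxes in $\Gamma_A$ being genuinely occupied by $E$, make explicit details the paper leaves unstated.
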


Finally, we address the natural question of whether or not these nested splitting configurations can also be found in positive-density subsets of $\Gamma_{\mathbf n}^{(b)}$. The exact notion of density for a subset $H\subset\Gamma_{\mathbf n}^{(b)}$ is the level-averaged density
\[d(H):=\frac{1}{\prod_{j=1}^d(n_j+1)}
\sum_{\mathbf0\leq\mathbf i\leq\mathbf n}
\frac{\labs{\{x\in H:\labs{x}=\mathbf i\}}}{b^{i_1+\cdots+i_d}}\]
which is the direct analogue of the
one-dimensional density used in Theorem~\ref{Th:FW Density}.

\begin{thmAlph}\label{Th: Product density result}
For all integers $b\geq2$ and $d,k\geq1$, and every $\delta>0$,
there exists $N=N(b,d,k,\delta)$ such that, whenever
\[
\min\{n_1,\ldots,n_d\}\geq N,
\]
every $H\subset\Gamma_{\mathbf n}^{(b)}$ with $d(H)\geq\delta$
contains an arithmetic replica of $T_k^{(b^d)}$.
\end{thmAlph}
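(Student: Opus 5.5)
The plan is to reduce Theorem~\ref{Th: Product density result} to a one-dimensional density theorem for $b^d$-ary trees by restricting $H$ to \emph{diagonal slices} of $\Gamma_{\mathbf n}^{(b)}$.

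\textbf{Diagonal forests.} For an offset $\mathbf c\in\Z_{\geq0}^d$ with $\min_j c_j=0$, consider the levels $\mathbf c+t\mathbf 1$ for $0\leq t\leq L(\mathbf c):=\min_j(n_j-c_j)$. A vertex at level $\mathbf c+t\mathbf 1$ has exactly $b^d$ diagonal children at level $\mathbf c+(t+1)\mathbf 1$. Hence the vertices on these levels form a forest $F_{\mathbf c}$ of $b^{|\mathbf c|}$ disjoint copies of the full $b^d$-ary tree $T^{(b^d)}_{L(\mathbf c)}$, where $|\mathbf c|=c_1+\cdots+c_d$.

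Suppose we find a replica of $T_k^{(b^d)}$ inside one of these trees whose signature $t_0<\dots<t_k$ is arithmetic. Then it is a replica inside $\Gamma_{\mathbf n}^{(b)}$ with signature $\mathbf c+t_r\mathbf 1$, which is arithmetic in $\Z^d$. The branching condition transfers because a descendant of a diagonal child in the diagonal tree is a descendant of that diagonal child in the product order. So it suffices to find one such tree in which $H$ has density at least $\delta/4$, say, and whose height is large.

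\textbf{Averaging.} Each level $\mathbf i$ lies on exactly one diagonal, namely the one with $\mathbf c=\mathbf i-(\min_j i_j)\mathbf 1$. The normalisation $b^{-|\mathbf i|}$ in $d(H)$ equals the proportion of the forest level that lies in $H$, up to the factor $b^{|\mathbf c|}$ counting trees. Therefore $d(H)$ is a weighted average, with weights proportional to the lengths $L(\mathbf c)+1$, of the one-dimensional densities of $H\cap F_{\mathbf c}$. Each forest density is in turn the average of the densities $d(H\cap \text{tree})$ over its trees.

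Now discard the diagonals of length less than $L_0$. A level $\mathbf i$ lies on such a diagonal only if $\min_j(n_j-i_j)<L_0$. The proportion of such levels is at most $\sum_j L_0/(n_j+1)\leq dL_0/(N+1)$. Choosing $N\geq 4dL_0/\delta$ makes their contribution at most $\delta/4$. Hence some diagonal of length at least $L_0$ carries forest density at least $\delta/2$, and then some single tree $T^{(b^d)}_{L}$ with $L\geq L_0$ meets $H$ in density at least $\delta/2$.

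\textbf{Conclusion and main obstacle.} Take $L_0=N'(b^d,k,\delta/2)$, where $N'$ is the threshold from the $b^d$-ary version of Theorem~\ref{Th:FW Density}. That theorem then yields the arithmetic replica. The hypothesis $\min_j n_j\geq N$ is what allows the discarded set of levels to be negligible.

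The main obstacle is precisely this higher-arity density theorem: Theorem~\ref{Th:FW Density} as recalled here is stated only for binary trees. I would first try to check that the Pach--Solymosi--Tardos combinatorial argument goes through verbatim for $q$-ary trees, since it relies only on Szemer\'edi's theorem and density increments along levels, neither of which uses $q=2$. Alternatively, one can embed $T^{(q)}$ into a binary tree by replacing each $q$-ary branching with $\lceil\log_2 q\rceil$ binary levels. This needs care, because arithmeticity of the signature and the requirement of branching through all $q$ children must both survive the embedding, so I expect the first route to be cleaner.
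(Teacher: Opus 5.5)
Your proposal is correct and follows essentially the same route as the paper. You decompose the product levels into maximal diagonal chains, average to find a long chain and then a single copy of $T^{(b^d)}_L$ on which $H$ has positive density, and apply the one-dimensional $b^d$-ary density theorem; the only difference is cosmetic, since the paper pigeonholes using a bound on the number of chains rather than discarding short diagonals. The higher-arity density theorem you flag as the main obstacle needs no new work: the paper takes it directly from Pach--Solymosi--Tardos together with Szemer\'edi's theorem (Theorem~\ref{Thm: Equal arity density}).
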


The proof of Theorem~\ref{Th: Product density result} is independent of the leaf-generated results and proceeds by first finding a suitable affine chain of product levels on which $H$ has large density, and then applying the one-dimensional density results of \cite{PST}.

\begin{ack}
The authors acknowledge substantial assistance from ChatGPT
(OpenAI) in the development of this work. Several proof strategies
and arguments were developed and refined through extended
mathematical exchanges with the system, which also assisted with
the exposition and the preparation of the TikZ figures.
Part of this work was carried out using access provided through
OpenAI's ChatGPT for Academic Researchers program.
The authors have independently checked the mathematical arguments
and references and take full responsibility for the final manuscript.
\end{ack}
Both authors were supported by the Australian Research Council through grant DP240100472.
\section{Replicas in one-dimensional trees}
For $b\geq 2$ write $\Lambda_b:=\{0,1,\ldots,b-1\}$. For any integer $n\geq 0$ the $b$-ary tree of height $n$ is the set
\[ T_n^{(b)} := \Lambda_b^{\leq n}\]
of finite words in the alphabet $\Lambda_b$ of length at most $n$, together with the ordering $\preceq$ such that $u\preceq v$ whenever $u$ is a prefix of $v$, including if $u=v$. The concatenation of two words $u,v$ is written using juxtaposition $uv$. The level of a vertex $u$ is its word length $\labs{u}$. The root of the tree is the empty word. We define the leaves and the interior of the tree to be the sets
\[\qquad  \partial T_n^{(b)} := \Lambda_b^{n},\qquad \text{and} \qquad \inte T_n^{(b)} := T_n^{(b)} \setminus \partial T_n^{(b)}\]  
respectively. For a leaf set $A\subset \partial T_n^{(b)}$ we denote by
\[ T_A = \{ u \in T_n^{(b)} \, : \, u \preceq x \text{ for some } x\in A\}\]
its ancestor closure.
\begin{defn}[Regular and arithmetic replicas]\label{Defn: Mixed arity replicas}
For $2\leq s \leq b$, a map $\phi: T_k^{(s)} \to T_n^{(b)}$ is called a \textit{regular embedding} if the following two conditions are satisfied.
\begin{enumerate}[(i)]
\item If $x,y\in T_k^{(s)}$ have $\labs{x} = \labs{y}$ then $\labs{\phi(x)} = \labs{\phi(y)}$.
\item If $x \in \inte T_k^{(s)}$ then $\phi$ maps each of the $s$ distinct children of $x$ to a descendant of a distinct child of $\phi(x)$.
\end{enumerate}
The image of $\phi$ is called a replica of $T_k^{(s)}$.
Together conditions (i) and (ii) ensure that there exists a sequence of levels
\[ 0 \leq l_0 < \cdots < l_k \leq n\]
called the signature of $\phi$ such that $\phi$ maps level $i$ of $T_k^{(s)}$ into level $l_i$ of $T_n^{(b)}$ for each $i=0,\ldots,k$. If the signature forms an arithmetic progression inside $\{0,1,\ldots,n\}$ the map and the replica are called arithmetic.
\end{defn}
\subsection{Mixed-arity replicas in leaf-generated trees}
In this subsection we shall prove the following higher-arity extension of Theorem~\ref{Th:FW Leaf}.
\begin{thm}\label{Thm: Mixed arity FW}
For $2\leq s \leq b$, $k\geq 1$ and $\alpha > \log_b(s-1)$, there exists some $N=N(b,s,k,\alpha)$ such that, for $n\geq N$, the ancestor closure of every $A\subset \partial T_n^{(b)}$ with $\labs{A} \geq b^{\alpha n}$ contains an arithmetic replica of $T_k^{(s)}$.
\end{thm}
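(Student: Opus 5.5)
Given $A\subset\partial T_n^{(b)}$ with $\labs{A}\geq b^{\alpha n}$ and $\alpha>\log_b(s-1)$, we will build a level-indexed colouring of a binary-type tree and then invoke the colouring/density version of Furstenberg--Weiss (Theorem~\ref{Th:FW Density}) to synchronise scales arithmetically. This follows the strategy of Pach--Solymosi--Tardos for deducing the leaf result from the density result.

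\textbf{Step 1: a counting lemma.} For $u\in T_A$ call $u$ \emph{$s$-splitting} if at least $s$ of its children lie in $T_A$. We first show that a leaf-generated tree with few $s$-splitting vertices on every root-to-leaf path has few leaves. Suppose every path from the root to a leaf of $T_A$ passes through at most $m$ vertices that are $s$-splitting. Then
\[\labs{A}\leq \binom{n}{\leq m}(s-1)^{n}b^{m}.\]
To see this, encode each leaf by the positions of the $s$-splitting vertices on its path (at most $\binom{n}{\leq m}$ choices), the child chosen at each such vertex ($b^m$ choices), and the child chosen at every other vertex, where at most $s-1$ options are available. If $m\leq\varepsilon n$ with $\varepsilon$ small, this bound is $b^{(\log_b(s-1)+\eta(\varepsilon))n}$ with $\eta(\varepsilon)\to0$. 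So when $\alpha>\log_b(s-1)$, some leaf path contains more than $\varepsilon n$ $s$-splitting vertices. We need a robust version of this: a large subtree of leaves each of which sees many $s$-splitting levels.

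\textbf{Step 2: a binary skeleton with positive density.} We pass to a regular structure. Using the counting above iteratively (a weighted pigeonhole over $A$), we extract a sub-leaf-set $A'\subset A$ and a set $L\subset\{0,\dots,n\}$ of at least $\varepsilon n$ levels such that every vertex of $T_{A'}$ at a level in $L$ is $s$-splitting within $T_{A'}$. Then $T_{A'}$ contains a replica of $T^{(s)}_{\labs{L}}$ with signature $L$ (non-arithmetic). Collapsing along $L$ gives a full $s$-ary tree of height $\labs{L}$. In it we define $H$ to be the set of vertices whose corresponding level lies in $L$; the density of $H$ (in the sense of Theorem~\ref{Th:FW Density}, adapted to $s$-ary trees, or embedded in a binary tree by taking only two children) is at least $\varepsilon$.

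\textbf{Step 3: applying the density theorem.} The obstacle is that arithmeticity must hold in the original levels of $T_n^{(b)}$, not in the collapsed indexing. We therefore work directly in $T_n$-levels: we form a subset $H$ of a full $s$-ary tree of height $n$ (an $s$-ary version of Theorem~\ref{Th:FW Density}, which is obtained by the same Pach--Solymosi--Tardos argument, or by grouping $s$-ary branching into binary), where the tree is the $s$-ary "skeleton" obtained by choosing, at each $s$-splitting vertex, $s$ children, and at non-splitting vertices, a single child duplicated. $H$ is the set of skeleton vertices at levels of $L$, and has density $\gtrsim\varepsilon$. The density theorem yields an arithmetic replica in $H$, and mapping back gives an arithmetic replica of $T_k^{(s)}$ in $T_A$, since splitting in $H$ means genuine $s$ distinct children in $T_A$.

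The main difficulty is Step 2: making the splitting levels uniform across a branching family, that is, the regularisation that turns "many splits per path" into "a common level set". To get it, I would first try a density-increment/greedy argument: at each level, either most mass of $A$ (weighting leaves uniformly) sits below $s$-splitting vertices, or we prune to at most $s-1$ children per vertex, losing a factor of at most $(s-1)/b$ in count. Since $\labs{A}\geq b^{\alpha n}$ with $\alpha>\log_b(s-1)$, pruning can occur on at most $(1-\varepsilon)n$ levels. Note that pruning must be done so that the tree remains a genuine ancestor closure, which is automatic.
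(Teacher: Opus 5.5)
The gap is in Step~2, and Step~2 is the whole content of the theorem. In the paper it is Lemma~\ref{Lemma: Sig counting lemma} together with Proposition~\ref{Prop: Linear length 1d replicas}. Step~1 only produces one path with many splits. Step~3 is just Szemer\'edi's theorem applied to $L$ followed by pruning, because your $H$ is a union of whole levels; that is also how the paper finishes.

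The greedy you sketch for Step~2 fails as written, for two concrete reasons.

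\emph{The bookkeeping does not close.} Suppose a pruning level retains at least a $(s-1)/b$ fraction of the leaves and a splitting level at least a fraction $\theta$. After $p$ pruning levels you only know $b^{\alpha n}\bigl(\tfrac{s-1}{b}\bigr)^{p}\theta^{n-p}\leq\labs{A'}\leq(s-1)^{p}b^{n-p}$. This reduces to $b^{\alpha n}\leq b^{n}\theta^{-(n-p)}$, which holds for every $p$, so nothing bounds the number of pruning levels. To bound $p$, each pruning level must cost only a factor $1-O(\theta)$ with $\theta$ small. With a ``most mass'' threshold, a pruning level can still lose a constant fraction, since the mass below splitting vertices may be just under one half. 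That exponential loss swamps the gap between $b^{\alpha}$ and $s-1$ when $\alpha$ is close to $\log_b(s-1)$.

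\emph{The structure is not preserved.} In a top-down sweep, restricting at a deeper level $j$ can delete an entire child subtree of a vertex you already declared $s$-splitting at a level $i<j$. Nothing in your argument ensures that the \emph{same} levels split along $s$ distinct branches. This is exactly what the paper's recursion tracks: $0$ is adjoined to $\tau^+$ only when at least $s$ root subtrees realise the same branching signature $\tau$. That gives $\labs{A}\leq\sum_{\sigma\in\mathrm{BrSig}_s(A)}(b-s+1)^{\labs{\sigma}}(s-1)^{n-\labs{\sigma}}$, and then, via the entropy estimate, a replica of $T^{(s)}_{\floor{cn}}$.

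Your idea can be repaired into a genuinely different, elementary route. Sweep \emph{bottom-up}, $j=n-1,\ldots,0$, with a small threshold $\theta$. If the current leaves lying below $s$-splitting level-$j$ vertices form at least a $\theta$-fraction, restrict to them and put $j\in L$. Otherwise restrict to the leaves below the non-splitting level-$j$ vertices.

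Every later step restricts to the leaves below a set of vertices at some level $i<j$, so it keeps or deletes each level-$j$ subtree whole. Hence in the final $T_{A'}$ every vertex at a level of $L$ is still $s$-splitting, and every vertex at a level outside $L$ has at most $s-1$ children. This gives $b^{\alpha n}\theta^{\labs{L}}(1-\theta)^{n-\labs{L}}\leq\labs{A'}\leq b^{\labs{L}}(s-1)^{n-\labs{L}}$. Choose first $\theta$ with $(s-1)/(1-\theta)<b^{\alpha}$, then $\varepsilon$ small; this forces $\labs{L}>\varepsilon n$.

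Now $T_{A'}$ contains a replica of $T^{(s)}_{\labs{L}}$ whose branching signature is $L$. Applying Szemer\'edi to $L$ and pruning finishes the proof, with no need for the skeleton or the density theorem.
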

Our main tool for proving Theorem~\ref{Thm: Mixed arity FW} is a signature-counting inequality for mixed-arity replicas in leaf-generated subtrees. In the equal-arity density setting, a similar signature-counting result was proved by Pach, Solymosi and Tardos \cite[Lemma 3']{PST}.

Our inequality will use the notion of a \textit{branching signature}. If a replica of $T_k^{(s)}$ has signature $\{l_0,\ldots,l_k\}$ according to Definition~\ref{Defn: Mixed arity replicas}, then we say it has \textit{branching signature} $\{l_0,\ldots,l_{k-1}\}$, corresponding to the levels in the image for which vertices are required to branch into $s$-many distinct children. For a leaf set $A\subset \partial T_n^{(b)}$ let
\[ \mathrm{BrSig}_s(A) \subset \mathcal{P}(\{0,1,\ldots,n-1\})\]
denote the set of all branching signatures realised by regular embeddings of finite $s$-ary trees into $T_A$. We include $\emptyset \in\mathrm{BrSig}_s(A)$ if and only if $A\neq \emptyset$, so that $\mathrm{BrSig}_s(\emptyset)= \emptyset$.
\begin{lem}\label{Lemma: Sig counting lemma}
For any $2\leq s\leq b$, any $n\geq 0$, and any $A\subset \partial T_n^{(b)}$, we have that
\begin{equation}\label{eq: branching sig inequ}
\labs{A} \leq \sum_{\sigma \in \mathrm{BrSig}_s(A)} (b-s+1)^{\labs{\sigma}} (s-1)^{n-\labs{\sigma}}.
\end{equation}
\end{lem}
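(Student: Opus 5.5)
Our plan is to prove \eqref{eq: branching sig inequ} by induction on $n$, in the style of the signature-counting lemma of Pach, Solymosi and Tardos. For $n=0$ the leaf set is either empty, giving $0\leq 0$, or the root alone, in which case $\mathrm{BrSig}_s(A)=\{\emptyset\}$ and the right-hand side equals $1$. For the inductive step with $n\geq1$, we split $A$ according to the first letter, writing $A=\bigcup_{c\in\Lambda_b} cA_c$ with $A_c\subset\partial T_{n-1}^{(b)}$. Let $F(B)$ denote the right-hand side for a leaf set $B$ at height $n-1$. We regard a branching signature of $A_c$ as a subset of $\{0,\dots,n-2\}$ and shift it by $+1$ when viewed inside $T_A$.

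The key observation concerns how signatures lift from the subtrees to $T_A$. Any $\sigma\in\mathrm{BrSig}_s(A_c)$ gives $\sigma+1\in\mathrm{BrSig}_s(A)$. Moreover, if $\sigma$ lies in $\mathrm{BrSig}_s(A_c)$ for at least $s$ distinct letters $c$, then $\{0\}\cup(\sigma+1)\in\mathrm{BrSig}_s(A)$: we place the root at level $0$ and graft $s$ of these replicas, whose levels agree because the signature is the same, below distinct children. Note that the empty signature behaves correctly, meaning $A_c\neq\emptyset$. Now fix $\sigma\subset\{0,\dots,n-2\}$ and let $m_\sigma$ be the number of letters $c$ with $\sigma\in\mathrm{BrSig}_s(A_c)$, and set $w(\sigma)=(b-s+1)^{|\sigma|}(s-1)^{n-1-|\sigma|}$. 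By induction,
\[
|A|=\sum_c|A_c|\leq\sum_\sigma m_\sigma\, w(\sigma).
\]
On the right-hand side for $A$, the contributions of $\sigma+1$ and, when available, $\{0\}\cup(\sigma+1)$ are $(s-1)w(\sigma)$ and $(b-s+1)w(\sigma)$ respectively. These are distinct signatures for distinct $\sigma$, so it suffices to show for each $\sigma$ that
\[
m_\sigma\leq (s-1)\mathbf 1[m_\sigma\geq1]+(b-s+1)\mathbf 1[m_\sigma\geq s].
\]
If $m_\sigma=0$ this is trivial. If $1\leq m_\sigma\leq s-1$ the bound is $m_\sigma\leq s-1$. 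If $m_\sigma\geq s$ the bound is $m_\sigma\leq b$, which holds since there are only $b$ letters.

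The only step requiring care is the grafting construction. We must check that a regular embedding of $T_j^{(s)}$ into $T_{A_c}$ with branching signature $\sigma$ lifts to $T_A$, so that the combined map of $T_{j+1}^{(s)}$ is regular. Its full signature $\{0\}\cup(\sigma+1)\cup\{\ell_j+1\}$ is consistent across the $s$ subtrees, because all of them share the same branching signature and hence the same full signature. Here a signature realised only by leaves, namely the empty branching signature, needs a consistent convention: we map to the leaf level, say. We also need that distinct letters give distinct children of the root. These points are routine bookkeeping, so we expect no real obstacle beyond fixing conventions for the empty signature and the terminal level $l_k$.
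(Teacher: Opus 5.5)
Your proof is correct and follows essentially the same route as the paper's: induction on $n$ via the first-letter decomposition, with your $m_\sigma$ playing the role of the paper's $t(\tau)$, and the same case split $1\le m_\sigma\le s-1$ versus $m_\sigma\ge s$. One claim needs tightening: a common branching signature does not by itself determine the terminal level $l_k$, but you can always push the leaves of each replica down to level $n$ (as the paper notes), and then the $s$ grafted replicas have compatible full signatures.
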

\begin{proof}
We use induction on $n$. For $n=0$ the two possibilities are $A=\emptyset$ or $A=\{\emptyset\}$, and the assertions of equation~\eqref{eq: branching sig inequ} are $0 \leq 0$ and $1\leq 1$ respectively.

Now assume the lemma has been shown for $n-1$ and let $A\subset \partial T_n^{(b)}$. For each $\lambda \in \Lambda_b$ write
\[ A_\lambda: = \{ x\in \Lambda_b^{n-1} \, : \lambda x \in A\} \subset \partial T_{n-1}^{(b)}.\]
For each $\tau \subset \{0,1,\ldots,n-2\}$ let
\[ t(\tau)= \labs{\{ \lambda \in \Lambda_b \, : \, \tau \in \mathrm{BrSig}_s(A_\lambda)\}}\]
be the number of branches of $T_A$ such that $T_{A_\lambda}$ realises the branching signature $\tau$. By the inductive hypothesis applied to each $A_\lambda$ we get that
\[ \labs{A_\lambda} \leq \sum_{\tau \in \mathrm{BrSig}_s(A_\lambda)} (b-s+1)^{\labs{\tau}} (s-1)^{n-1-\labs{\tau}}.\]
Summing over $\lambda \in \Lambda_b$ yields
\[ \labs{A} \leq \sum_{\tau\subset \{0,1,\ldots,n-2\}} t(\tau) (b-s+1)^{\labs{\tau}} (s-1)^{n-1-\labs{\tau}}.\] 
For each $\tau \subset \{0,1,\ldots,n-2\}$, let $\tau^+ \subset \{1,\ldots,n-1\}$ denote the set obtained by adding $1$ to each element of $\tau$. If $t(\tau)\geq 1$ then clearly $\tau^+ \in \mathrm{BrSig}_s(A)$. If $t(\tau)\geq s$ then again $\tau^+\in\mathrm{BrSig}_s(A)$, but we also have that $\tau^+\cup\{0\} \in \mathrm{BrSig}_s(A)$ since we can join $s$-many replicas through the root. Notice there is no issue with the compatibility of the terminal vertices of these replicas since we may always assume that they lie at level $n$.

We now compare each summand with the weight assigned to the corresponding branching signature in $\mathrm{BrSig}_s(A)$ by equation~\eqref{eq: branching sig inequ}. Put
\[W_\tau:=(b-s+1)^{\labs{\tau}}(s-1)^{n-1-\labs{\tau}}.\]
Each $\tau$ with $1 \leq t(\tau) \leq s-1$ contributes at most $(s-1)W_\tau$, which is exactly the weight assigned to $\tau^+ \in \mathrm{BrSig}_s(A)$. Each $\tau$ with $s\leq t(\tau) \leq b$ contributes at most
\[bW_\tau=(b-s+1)W_\tau+(s-1)W_\tau,\]
where the two terms on the right are exactly the weights assigned to $\tau^+\cup \{0\}$ and $\tau^+$, respectively. Equation~\eqref{eq: branching sig inequ} then follows.
\end{proof}
\begin{pro}\label{Prop: Linear length 1d replicas}
For $2\leq s \leq b$, and $\alpha > \log_b(s-1)$, there exists some $c=c(s,b,\alpha)>0$ such that, for sufficiently large $n$, the ancestor closure of every $A\subset \partial T_n^{(b)}$ with $\labs{A} \geq b^{\alpha n}$ contains a replica of $T_{\floor{cn}}^{(s)}$.
\end{pro}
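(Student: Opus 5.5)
We derive the proposition directly from Lemma~\ref{Lemma: Sig counting lemma}. The strategy is to show that if every branching signature realised by $T_A$ has size less than $cn$, then the right-hand side of \eqref{eq: branching sig inequ} is too small to reach $b^{\alpha n}$. Any branching signature $\sigma\in\mathrm{BrSig}_s(A)$ with $\labs{\sigma}\geq \floor{cn}$ comes from a regular embedding of $T_{\labs{\sigma}}^{(s)}$ into $T_A$. Restricting this embedding to the first $\floor{cn}$ levels gives a replica of $T_{\floor{cn}}^{(s)}$. So it suffices to prove that $\mathrm{BrSig}_s(A)$ contains some $\sigma$ with $\labs{\sigma}\geq cn$.

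Suppose instead that every $\sigma\in\mathrm{BrSig}_s(A)$ has $\labs{\sigma}<cn$. We bound the right-hand side of \eqref{eq: branching sig inequ} by summing over all subsets $\sigma\subset\{0,\ldots,n-1\}$ of size $j<cn$:
\[
\labs{A}\leq \sum_{j< cn}\binom{n}{j}(b-s+1)^{j}(s-1)^{n-j}\leq (s-1)^n\sum_{j<cn}\binom{n}{j}\Bigl(\tfrac{b-s+1}{s-1}\Bigr)^{j}.
\]
For $c\leq 1/2$ we have $\sum_{j<cn}\binom{n}{j}\leq 2^{H(c)n}$, where $H$ is the binary entropy function. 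Setting $M:=\max\bigl(1,\tfrac{b-s+1}{s-1}\bigr)$, this gives
\[
\labs{A}\leq (s-1)^n\, 2^{H(c)n}\, M^{cn}=b^{n\left(\log_b(s-1)+H(c)\log_b 2+c\log_b M\right)}.
\]

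Since $H(c)\to 0$ and $c\log_b M\to0$ as $c\to0$, we can choose $c=c(s,b,\alpha)\in(0,1/2]$ small enough that
\[
\log_b(s-1)+H(c)\log_b 2+c\log_b M<\alpha.
\]
For such $c$, the bound gives $\labs{A}<b^{\alpha n}$ for all $n\geq1$, which is a contradiction. Hence some branching signature has size at least $cn$, and for large $n$ (so that $\floor{cn}\geq 1$, though the case $\floor{cn}=0$ is trivial anyway) we obtain the replica.

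The only point needing care is the case $s=2$, where $(s-1)^n=1$ and $\log_b(s-1)=0$. The same computation still works there. The main step is the entropy estimate; the rest is bookkeeping.
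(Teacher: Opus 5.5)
Your proof is correct and follows the paper's route: assuming no branching signature has size at least $cn$, you bound $\labs{A}$ via Lemma~\ref{Lemma: Sig counting lemma} by a weighted binomial sum over signatures of size less than $cn$, and an entropy estimate then keeps this sum below $b^{\alpha n}$. The only difference is in the estimate. You use the cumulative tail bound $\sum_{j<cn}\binom{n}{j}\leq 2^{H(c)n}$ together with the uniform factor $M^{cn}$, whereas the paper bounds each term by $e^{nf(r/n)}$ using continuity of $f$ at $0$ and then absorbs the factor $(n+1)$ into $e^{-\eta n}$; as a result your version gives the conclusion for every $n\geq 1$, not just for large $n$.
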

\begin{proof}
Let $A\subset \partial T_n^{(b)}$ be as in the statement of the proposition. If $T_A$ contains no replica of $T_k^{(s)}$, then every $\sigma \in \mathrm{BrSig}_s(A)$ has $\labs{\sigma}\leq k-1$. By Lemma~\ref{Lemma: Sig counting lemma} we must then have that
\begin{equation}\label{eq: bound for leaf set missing height k rep}
 \labs{A} \leq \sum_{r=0}^{\min(k-1,n)}\binom{n}{r} (b-s+1)^r (s-1)^{n-r}.
\end{equation}
We will show that this contradicts that $\labs{A}\geq b^{\alpha n}$ when $k = \floor{cn}$ for some appropriate choice of $c>0$.

For $x\in [0,1]$ define
\[ f(x) := h_2(x) + x \ln (b-s+1) + (1-x) \ln (s-1)\]
where $h_2(x) =-x \ln( x) - (1-x) \ln (1-x)$ is the binary entropy function, with $h_2(0):=0$ and $h_2(1):=0$. Since $\log_b(s-1)<\alpha$, we have $f(0) = \ln(s-1) < \alpha \ln (b)$. By continuity of $f$, there exist $c=c(s,b,\alpha)>0$ and $\eta>0$ such that
\begin{equation}\label{eq: f is small enough near 0}
f(x) \leq \alpha \ln(b) -\eta \qquad \text{for every } x\in [0,c].
\end{equation}
Set $k=\floor{cn}$. For $0\leq r \leq k$ the standard entropy estimate\footnote{Indeed, for $x=r/n$ the binomial theorem implies that
\[ 1= (x+(1-x))^n \geq \binom{n}{r}x^r(1-x)^{n-r}\] from which the desired bound follows.}
\[ \binom{n}{r} \leq e^{n h_2\left( r/n\right)}\]
and equation~\eqref{eq: f is small enough near 0} imply that
\begin{align*}
\binom{n}{r} (b-s+1)^r (s-1)^{n-r} &\leq e^{n h_2\left( r/n\right)}(b-s+1)^r (s-1)^{n-r}\\
&=e^{n f\left(r/n\right)} \leq e^{n(\alpha \ln(b) - \eta)} = b^{n\alpha} e^{-\eta n}.
\end{align*}
Applying this to each term in equation~\eqref{eq: bound for leaf set missing height k rep} implies that
\[ \labs{A} \leq (n+1) b^{n \alpha} e^{-\eta n},\]
but then $\labs{A} < b^{n\alpha}$ for large enough $n$ which is a contradiction.
\end{proof}
\begin{proof}[Proof of Theorem~\ref{Thm: Mixed arity FW}]
Proposition~\ref{Prop: Linear length 1d replicas} ensures that $T_A$ contains a replica of $T_{\floor{cn}}^{(s)}$ for $n$ large enough. The signature of this replica is then a subset of $\{0,1,\ldots,n\}$ of density at least $c/2$ for large enough $n$, and so it follows from Szemer\'{e}di's theorem that it must contain an arithmetic progression of length $k+1$ provided $n$ is large enough. Pruning our replica of $T_{\floor{cn}}^{(s)}$ in the obvious way then provides an arithmetic replica of $T_k^{(s)}$ inside $T_A$.
\end{proof}
\begin{rema}[Sharpness of the growth rate]
Let $W\subset \Lambda_b$ have $\labs{W} = s-1$ and put $A = W^n \subset \partial T_n^{(b)}$. Then $\labs{A} =(s-1)^n = b^{n\log_b(s-1)}$, but every non-leaf vertex in $T_A$ has exactly $s-1$ children, and so $T_A$ does not even contain a replica of $T_1^{(s)}$. This shows that the condition $\alpha > \log_b(s-1)$ in Proposition~\ref{Prop: Linear length 1d replicas} and Theorem~\ref{Thm: Mixed arity FW} is sharp.
\end{rema}
\subsection{Equal-arity replicas in finite colourings of trees}
Let us now recall the equal-arity density result of Pach, Solymosi and Tardos from which we will immediately deduce the colouring result that we require for the proof of our main theorem. We will also use their density result as part of our proof of Theorem~\ref{Th: Product density result}.

For $b\geq 2$ define the \textit{weight}\footnote{In the binary case $b=2$, the density of a subset $H\subset T_n^{(2)}$ as defined in the introduction is then $w_2(H)/(n+1)$.} of a subset $H\subset T_n^{(b)}$ to be
\[ w_b(H):= \sum_{x\in H} b^{-\labs{x}}.\]
\begin{thm}\label{Thm: Equal arity density}
For every $b\geq 2$, $\delta>0$, and $k\geq 1$, there exists $N=N(b,\delta,k)$ such that for all $n\geq N$, every $H\subset T_n^{(b)}$ with $w_b(H)\geq \delta(n+1)$ contains an arithmetic replica of $T_k^{(b)}$.
\end{thm}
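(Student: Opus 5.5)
This is the equal-arity density theorem of Pach, Solymosi and Tardos \cite{PST}, stated for general $b$. The plan is to follow their combinatorial route: first get a signature-counting inequality in the density setting, then extract a long replica, then make it arithmetic using Szemer\'{e}di's theorem (Theorem~\ref{Th: Szm}).

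\textbf{Step 1: the counting lemma.} For $H\subset T_n^{(b)}$, let $\mathrm{Sig}_b(H)\subset\mathcal P(\{0,\ldots,n\})$ be the set of full signatures of replicas of $T_j^{(b)}$ in $H$, for all $j\geq 0$. Single vertices give the signatures $\{l\}$. The aim is the density analogue of Lemma~\ref{Lemma: Sig counting lemma}:
\[
w_b(H)\leq \labs{\mathrm{Sig}_b(H)},
\]
which corresponds to \cite[Lemma 3']{PST}. The proof is by induction on $n$, splitting $H$ into the root part and the $b$ subtrees $H_\lambda$, so that $w_b(H)=\mathbf 1_{\emptyset\in H}+\frac1b\sum_\lambda w_b(H_\lambda)$. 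For each signature $\tau$ (at shifted levels), let $t(\tau)$ be the number of $\lambda$ for which $H_\lambda$ realises $\tau$. The term $\frac1b\sum_\lambda\labs{\mathrm{Sig}(H_\lambda)}$ equals $\sum_\tau t(\tau)/b$.
\begin{itemize}
\item If $t(\tau)\geq1$, then $\tau^+$ is a signature of $H$.
\item If $t(\tau)=b$ and the root lies in $H$, then $\{0\}\cup\tau^+$ is also a signature of $H$, obtained by gluing the $b$ replicas at the root.
\end{itemize}
Each $\tau$ therefore contributes at most $1$, and at most $2$ in the glued case. The root term $\mathbf 1_{\emptyset\in H}$ must also be absorbed. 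When some $t(\tau)<b$, the slack $t(\tau)/b\leq 1-1/b$ is not enough on its own. The fix I would try first is to carry the extra root indicator by the signature $\{0\}$: it is new, since all $\tau^+$ avoid $0$, and it is distinct from the glued signatures $\{0\}\cup\tau^+$ with $\tau\neq\emptyset$. If $\emptyset\in\mathrm{Sig}$ creates double counting, I would adopt the convention of \cite{PST} that signatures are nonempty and weights are counted exactly. This bookkeeping is the main obstacle, and I would check the base case $n=0$ carefully.

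\textbf{Step 2: a long replica.} Suppose $H$ contains no replica of $T_m^{(b)}$. Then every signature has at most $m$ elements, so
\[
\delta(n+1)\leq w_b(H)\leq\sum_{r\leq m}\binom{n+1}{r}.
\]
This is not contradictory for small $m$ alone. To handle this, I would choose $m$ linear in $n$, say $m=\floor{cn}$ with $h_2(c)$ small. That still does not contradict a polynomial-size lower bound, so instead I would use the lemma directly. Since $\labs{\mathrm{Sig}_b(H)}\geq\delta(n+1)$ and the sets of a family of subsets of $\{0,\ldots,n\}$ that is closed under subsets cover at least as many points as it has singletons, I would aim for a single signature of density $\geq\delta'$.

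The cleaner route, following \cite{PST}, is to use the multiplicative form: apply the argument to the combined weight, or apply Theorem~\ref{Th: Szm} to the union. Concretely, Szemer\'{e}di-type density theorems for down-closed families (the density Hales–Jewett or the PST argument) give a signature containing a $(k+1)$-term arithmetic progression. Pruning that replica then gives an arithmetic replica of $T_k^{(b)}$, exactly as in the proof of Theorem~\ref{Thm: Mixed arity FW}.

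The main obstacle is getting from "many signatures" to "one signature containing a long arithmetic progression". For this I would invoke the PST argument, which iterates the counting lemma on the levels of an arithmetic progression.
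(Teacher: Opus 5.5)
\textbf{There is a genuine gap: Step 2 does not go through, and the closing appeal to ``the PST argument'' is where the actual proof would have to happen.}

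Your Step 1 inequality $w_b(H)\leq\labs{\mathrm{Sig}_b(H)}$ is correct: the root is absorbed by $\{0\}$, and each $\tau$ contributes $t(\tau)/b\leq 1$. But it carries no information. Every occupied level gives a singleton signature and contributes at most $1$ to $w_b(H)$, so the inequality follows from that trivial remark alone.

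Nothing can extract long signatures from a family of linear size. The down-closed family $\{\emptyset\}\cup\{\{l\}:0\leq l\leq n\}$ has $n+2$ members and no two-element set. So there is no ``Szemer\'{e}di-type theorem for down-closed families'' that you could invoke, and density Hales--Jewett is beside the point.

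For comparison, the paper does not reprove this statement. It deduces it from \cite[Theorem 1']{PST} together with Szemer\'{e}di's theorem. Before Szemer\'{e}di's theorem can be applied, one needs a single replica whose signature occupies a positive proportion of $\{0,\ldots,n\}$, and that is exactly what your Step 2 fails to produce.

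\textbf{The missing idea is to count signatures exponentially rather than linearly.}

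Let $y$ be a uniform leaf of $T_n^{(b)}$ and let $L_H(y)=\{i: y|_i\in H\}$ be the set of levels where the path to $y$ meets $H$. Then $\mathbb{E}\labs{L_H(y)}=w_b(H)$. Let $\mathrm{Sig}^*(H)$ include $\emptyset$.

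Your induction, with the same case split on $t(\tau)$, proves
\[
\mathbb{E}\big[\lambda^{\labs{L_H(y)}}\big]\leq\labs{\mathrm{Sig}^*(H)}\qquad\text{with }\lambda=b/(b-1).
\]
Writing $y=\mu y'$, one has
\[
\mathbb{E}\big[\lambda^{\labs{L_H(y)}}\big]=\lambda^{\mathbf{1}_{\emptyset\in H}}\,b^{-1}\sum_\mu\mathbb{E}\big[\lambda^{\labs{L_{H_\mu}(y')}}\big]\leq\lambda^{\mathbf{1}_{\emptyset\in H}}\,b^{-1}\sum_\tau t(\tau).
\]
\begin{itemize}
\item If the root is not in $H$, each $\tau$ contributes $t(\tau)/b\leq 1$, which is covered by $\tau^+$.
\item If the root is in $H$ and $1\leq t(\tau)\leq b-1$, then $\tau$ contributes $\lambda t(\tau)/b\leq 1$, covered by $\tau^+$.
\item If the root is in $H$ and $t(\tau)=b$, then $\tau$ contributes $\lambda\leq 2$, covered by $\tau^+$ and $\{0\}\cup\tau^+$. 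The case $\tau=\emptyset$, for which $t(\emptyset)=b$ always, yields $\{0\}$.
\end{itemize}

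Jensen's inequality then gives $\labs{\mathrm{Sig}^*(H)}\geq\lambda^{w_b(H)}\geq\lambda^{\delta(n+1)}$, that is, exponentially many signatures. Suppose $H$ had no replica of $T_m^{(b)}$. Then every signature would have at most $m$ elements, so
\[
\labs{\mathrm{Sig}^*(H)}\leq\sum_{r\leq m}\binom{n+1}{r}\leq e^{(n+1)h_2(m/(n+1))}.
\]
Choose $c\in(0,1/2]$ with $h_2(c)<\delta\ln\lambda$ and put $m=\floor{c(n+1)}$; this gives a contradiction. Hence $H$ contains a replica whose signature has at least $c(n+1)$ levels. Szemer\'{e}di's theorem applied to that signature, followed by pruning, then finishes exactly as in the proof of Theorem~\ref{Thm: Mixed arity FW}.
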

Theorem~\ref{Thm: Equal arity density} is a direct consequence of \cite[Theorem 1']{PST} and Szemer\'{e}di's theorem, as described in \cite{PST} immediately after the statement of their theorem. It has the following immediate consequence for finite colourings of trees.
\begin{thm}\label{Thm: Equal arity colouring}
For all integers $b\geq 2$, $r\geq 1$ and $k\geq 1$, there exists some $N=N(b,r,k)$ such that for every $n\geq N$, every $r$-colouring $\chi :T_n^{(b)} \to \{1,\ldots,r\}$ admits a monochromatic arithmetic replica of $T_k^{(b)}$.
\end{thm}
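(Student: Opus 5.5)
The plan is a pigeonhole reduction to Theorem~\ref{Thm: Equal arity density}. Fix $b,r,k$, and let $\chi:T_n^{(b)}\to\{1,\ldots,r\}$ be an arbitrary colouring. For each colour $c$ put $H_c:=\chi^{-1}(c)$. The sets $H_1,\ldots,H_r$ partition $T_n^{(b)}$, and the weight $w_b$ is additive over disjoint unions. The total weight is therefore
\[\sum_{c=1}^r w_b(H_c)=w_b(T_n^{(b)})=\sum_{i=0}^n b^i\cdot b^{-i}=n+1,\]
since level $i$ contains exactly $b^i$ vertices, each of weight $b^{-i}$.

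By pigeonhole, some colour class $H_c$ satisfies $w_b(H_c)\geq (n+1)/r$. I would then set $\delta:=1/r$ and take $N(b,r,k):=N(b,1/r,k)$, the threshold supplied by Theorem~\ref{Thm: Equal arity density}. For $n\geq N$ that theorem yields an arithmetic replica of $T_k^{(b)}$ inside $H_c$. Every vertex of this replica lies in $H_c$, so the replica is monochromatic of colour $c$, which is exactly the claim.

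There is no real obstacle here. The only points to check are that the full tree has weight exactly $n+1$, so that the normalisation in Theorem~\ref{Thm: Equal arity density} matches, and that $N$ depends only on $b,r,k$ and not on the colouring. The latter holds because $\delta=1/r$ is chosen uniformly before the colouring is fixed.
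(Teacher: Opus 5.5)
Your proposal is correct and is essentially the paper's proof. Both use $w_b(T_n^{(b)})=n+1$ to find a colour class of weight at least $(n+1)/r$, and then apply Theorem~\ref{Thm: Equal arity density} with $\delta=1/r$ to that class.
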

\begin{proof}
For an $r$-colouring of $T_n^{(b)}$ with colour classes $C_1,\ldots, C_r$ we have that
\[ \sum_{i=1}^r w_b(C_i) = n+1\]
and so some colour class $C_i$ must have $w_b(C_i) \geq (n+1)/r$. Applying Theorem~\ref{Thm: Equal arity density} to this $C_i$ gives the monochromatic arithmetic replica.
\end{proof}
\section{Splittings in a product of two binary trees}
In this section we prove our splitting inequality for the product of two binary trees. The proof will serve as a model for the more general product splitting inequality for arbitrary finite products of $b$-ary trees presented in Section~\ref{Sec: general case}.

Write $T_n = T_n^{(2)}$. For integers $n,m\geq 0$, the two-dimensional binary product tree of height $(n,m)$ is the set
\[ \Gamma_{n,m} = T_n \times T_m\]
together with the product order $\preceq_2$ inherited from its component trees, where $(u,v) \preceq_2 (x,y)$ whenever $u\preceq x$ and $v\preceq y$. The level of any $(u,v) \in \Gamma_{n,m}$ is defined to be $\labs{(u,v)} := (\labs{u},\labs{v})$. The leaves and the interior of the product tree are the sets
\[ \partial \Gamma_{n,m} := \partial T_n \times \partial T_m \quad \text{and} \quad \inte \Gamma_{n,m}: = \inte T_n \times \inte T_m \]
respectively.
For any $A\subset \partial \Gamma_{n,m}$ its ancestor closure is denoted
\[ \Gamma_A=\{(u,v) \in \Gamma_{n,m} \, : \, (u,v)\preceq_2 (x,y) \text{ for some }(x,y)\in A\}.\]
\begin{defn}[Product splittings in $\Gamma_{n,m}$]
Let $A\subset \partial \Gamma_{n,m}$. We say that a vertex $(u,v) \in \Gamma_A$ \textit{splits} if
\begin{equation}\label{eq: diag children}
(u \varepsilon,v \delta) \in \Gamma_A \quad \text{for every }(\varepsilon,\delta) \in \{0,1\}^2
\end{equation}
and denote by $\Split_2^{(2)}(A)$ the set of all vertices in $\Gamma_A$ which split. The four vertices in equation~\eqref{eq: diag children} are called the \textit{diagonal children} of $(u,v)$. Notice also that $\Split_2^{(2)}(A) \subset \inte \Gamma_{n,m}$.
\end{defn}
\begin{pro}\label{Prop: 2d binary splitting count}
For all $n,m\geq 0$ and $A \subset \partial \Gamma_{n,m}$ we have that
\[ \labs{\Split_2^{(2)}(A)} \geq \labs{A} -(2^n + 2^m -1).\] 
\end{pro}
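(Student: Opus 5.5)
Induction on $n+m$ is the natural route, mirroring the proof of Lemma~\ref{Lemma: Sig counting lemma}. For the base cases, if $n=0$ then $\labs{A}\leq 2^m = 2^0+2^m-1$, and the inequality reads $\labs{\Split_2^{(2)}(A)}\geq \labs{A}-2^m$, which is trivial since the right side is nonpositive. The case $m=0$ is symmetric. So assume $n,m\geq 1$.

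For the inductive step, decompose $A$ according to the first letters of both coordinates. For $(\varepsilon,\delta)\in\{0,1\}^2$ put
\[A_{\varepsilon\delta}:=\{(x,y)\in\partial\Gamma_{n-1,m-1}: (\varepsilon x,\delta y)\in A\}.\]
Every splitting vertex of $\Gamma_{A_{\varepsilon\delta}}$ shifts to a splitting vertex $(\varepsilon u,\delta v)$ of $\Gamma_A$. These shifted vertices are distinct across different $(\varepsilon,\delta)$ and never equal the root. The root itself splits exactly when all four $A_{\varepsilon\delta}$ are nonempty. Induction then gives
\[\labs{\Split_2^{(2)}(A)}\geq \sum_{\varepsilon,\delta}\bigl(\labs{A_{\varepsilon\delta}}-(2^{n-1}+2^{m-1}-1)\bigr)^{+}+\mathbf{1}[\text{root splits}].\]
The sum of the $\labs{A_{\varepsilon\delta}}$ is $\labs{A}$.

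The key step is a case analysis on how many quadrants are nonempty. If all four are nonempty, we get at least $\labs{A}-4(2^{n-1}+2^{m-1}-1)+1=\labs{A}-(2^{n+1}+2^{m+1}-5)$. This is weaker than the target $\labs{A}-(2^n+2^m-1)$, so naive four-quadrant induction is too lossy. I would therefore change the recursion to peel off one coordinate at a time, inducting on $n+m$ via a single split.

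Split only the first coordinate: $A=A_0\sqcup A_1$ with $A_\varepsilon\subset\partial\Gamma_{n-1,m}$. Splitting vertices of $\Gamma_{A_\varepsilon}$ at $(u,v)$ give splits at $(\varepsilon u,v)$ in $\Gamma_A$, which contributes $\labs{A}-2(2^{n-1}+2^m-1)=\labs{A}-2^n-2^{m+1}+2$. The deficit against the target is $2^m-1$. This must be recovered from splits at vertices $(\emptyset,v)$ with $v\in\inte T_m$. The vertex $(\emptyset,v)$ splits iff there are $\delta\neq\delta'$ in $\{0,1\}$ such that $v\delta$ is a prefix of some second coordinate in $A_0$ and $v\delta'$ is a prefix of some second coordinate in $A_1$. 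Let $B_\varepsilon\subset\partial T_m$ be the projection of $A_\varepsilon$ to the second coordinate. The needed claim is
\[\#\{v: (\emptyset,v)\text{ splits}\}\geq \labs{B_0}+\labs{B_1}-2^m-1,\]
combined with the sharper inductive bound $\labs{A_\varepsilon}\leq \labs{\Split(A_\varepsilon)}+2^{n-1}-1+\labs{B_\varepsilon}$. That sharper bound needs its own base case $n=0$, where $\labs{A}=\labs{B}$. I would therefore strengthen the statement to $\labs{\Split_2^{(2)}(A)}\geq\labs{A}-(2^n-1)-\labs{\pi_2(A)}$, which implies the proposition since $\labs{\pi_2(A)}\leq 2^m$.

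The main obstacle is the one-dimensional claim about $v$. It says that in the binary tree $T_m$ with two leaf sets $B_0$ and $B_1$, the number of vertices $v$ with $v0\in T_{B_0}$, $v1\in T_{B_1}$ or vice versa is at least $\labs{B_0}+\labs{B_1}-\labs{B_0\cup B_1}-1=\labs{B_0\cap B_1}-1$. I would prove this by noting that the ancestor closure $T_{B_0\cap B_1}$ has exactly $\labs{B_0\cap B_1}-1$ splitting vertices by \eqref{eq: Baby splitting lemma}. Each such $v$ has both $v0$ and $v1$ in $T_{B_0}\cap T_{B_1}$, so $(\emptyset,v)$ splits. The induction for the strengthened statement then closes by combining the two halves with this count, after checking that $\labs{\pi_2(A_0)}+\labs{\pi_2(A_1)}-(\labs{B_0\cap B_1}-1)=\labs{\pi_2(A)}+1$.
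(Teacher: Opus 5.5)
Your final argument is correct, but it follows a different route from the paper's. The paper does not induct on heights. It fibres $A$ over the first-coordinate leaves $x\in\{0,1\}^n$ and, for each $v\in\inte T_m$, forms the leaf set $E_v$ of those $x$ whose fibre $A_x$ splits at $v$. Double counting gives $\sum_v\labs{E_v}\geq\labs{A}-2^n$. It then notes that $u\in\Split_1^{(2)}(E_v)$ forces $(u,v)\in\Split_2^{(2)}(A)$, and applies Lemma~\ref{Lemma: Baby split lem} a second time to get $\sum_v(\labs{E_v}-1)\geq\labs{A}-2^n-(2^m-1)$.

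You instead (after rightly discarding the lossy four-quadrant recursion) induct on $n$ alone with $m$ fixed. You peel off the first letter of the first coordinate and carry the strengthened hypothesis
\[\labs{\Split_2^{(2)}(A)}\geq\labs{A}-(2^n-1)-\labs{\pi_2(A)}.\]
The new splits in the row $\{(\emptyset,v)\}$ are supplied by $\Split_1^{(2)}(B_0\cap B_1)$. Since $\pi_2(A)=B_0\cup B_1$, the identity $\labs{B_0}+\labs{B_1}-\labs{B_0\cap B_1}=\labs{B_0\cup B_1}$ closes the induction with no loss. The base case $n=0$ is trivial because there $\labs{A}=\labs{\pi_2(A)}$.

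What each route buys:
\begin{itemize}
\item The paper's fibration needs no auxiliary strengthening. It is exactly the template the paper reuses for the weighted $b$-ary, $d$-fold inequality (induction on $d$ with the same sets $E_v$).
\item Your root-peeling recursion is closer in spirit to the proof of Lemma~\ref{Lemma: Sig counting lemma}, and it yields a projection-sensitive bound along the way.
\item The paper's argument also gives such a refinement: keeping $\mathbf{1}_{A_x\neq\emptyset}$ and noting that $E_v\neq\emptyset$ forces $v\in\Split_1^{(2)}(\pi_2(A))$ yields $\labs{A}-\labs{\pi_1(A)}-\labs{\pi_2(A)}+1$ for nonempty $A$.
\end{itemize}

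One assertion in your write-up is false, though the count you finally use does not depend on it. The vertex $(\emptyset,v)$ splits if and only if all four diagonal children $(\varepsilon,v\delta)$ lie in $\Gamma_A$, i.e.\ $v0,v1\in T_{B_0}\cap T_{B_1}$. It is not enough that $v\delta\in T_{B_0}$ and $v\delta'\in T_{B_1}$ for some $\delta\neq\delta'$. For $n=m=1$ and $A=\{(0,0),(1,1)\}$ your criterion accepts $v=\emptyset$, yet $(0,1)\notin\Gamma_A$.

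Every $v\in\Split_1^{(2)}(B_0\cap B_1)$ has $v0,v1\in T_{B_0\cap B_1}\subset T_{B_0}\cap T_{B_1}$. So your bound $\#\{v:(\emptyset,v)\text{ splits}\}\geq\labs{B_0\cap B_1}-1$ holds for the correct criterion, and the proof stands. In the final version, state the criterion correctly, and drop the abandoned four-quadrant attempt and the phrase ``induction on $n+m$''.
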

We first prove the one-dimensional fact stated in equation~\eqref{eq: Baby splitting lemma} of the introduction. For a set $C\subset \partial T_n$ write $\Split_1^{(2)}(C)$ for the set of vertices in $T_C$ with $2$ children.
\begin{lem}\label{Lemma: Baby split lem}
For all $n\geq 0$ and $C\subset \partial T_n$ we have that
\[ \labs{\Split_1^{(2)}(C)} = \labs{C} - \mathbf{1}_{C\neq \emptyset}.\]
In particular, if $C\neq \emptyset$, then $T_C$ has exactly $\labs{C}-1$ vertices with $2$ children.
\end{lem}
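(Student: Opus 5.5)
We prove Lemma~\ref{Lemma: Baby split lem} by induction on $n$, splitting the leaf set at the root exactly as in the proof of Lemma~\ref{Lemma: Sig counting lemma}.

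\emph{Base case.} For $n=0$ there are only two choices, $C=\emptyset$ or $C=\{\emptyset\}$. In both cases $T_C$ has no non-leaf vertices, so $\Split_1^{(2)}(C)=\emptyset$. The right-hand side is $0-0=0$ or $1-1=0$ respectively, so the identity holds.

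\emph{Inductive step.} Assume the identity for $n-1$ and let $C\subset\partial T_n$. For $\lambda\in\{0,1\}$ set
\[
C_\lambda:=\{x\in\partial T_{n-1}:\lambda x\in C\},
\]
so that $\labs{C}=\labs{C_0}+\labs{C_1}$. The vertices of $T_C$ other than the root are exactly the words $\lambda u$ with $u\in T_{C_\lambda}$. Moreover, $\lambda u$ has both children in $T_C$ if and only if $u$ has both children in $T_{C_\lambda}$. Hence
\[
\labs{\Split_1^{(2)}(C)}=\mathbf 1_{\text{root splits}}+\labs{\Split_1^{(2)}(C_0)}+\labs{\Split_1^{(2)}(C_1)}.
\]
The root splits exactly when both $C_0$ and $C_1$ are non-empty.

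By the inductive hypothesis the right-hand side equals
\[
\mathbf 1_{C_0\neq\emptyset}\mathbf 1_{C_1\neq\emptyset}+\labs{C}-\mathbf 1_{C_0\neq\emptyset}-\mathbf 1_{C_1\neq\emptyset}.
\]
It remains to check the Boolean identity
\[
\mathbf 1_{C_0\neq\emptyset}+\mathbf 1_{C_1\neq\emptyset}-\mathbf 1_{C_0\neq\emptyset}\mathbf 1_{C_1\neq\emptyset}=\mathbf 1_{C\neq\emptyset}.
\]
This is inclusion–exclusion for the union of the two events, since $C\neq\emptyset$ if and only if $C_0\neq\emptyset$ or $C_1\neq\emptyset$.

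\emph{Alternative argument.} One can also see this directly by counting edges in the finite rooted tree $T_C$. Every non-leaf vertex of $T_C$ has one or two children, and the leaves of $T_C$ are exactly $C$. When $C\neq\emptyset$, the number of edges is $\labs{T_C}-1$, and it also equals the sum of the numbers of children over all vertices. Comparing the two counts shows that the number of vertices with two children is $\labs{C}-1$.

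The argument is routine throughout. The only care needed is the bookkeeping of the indicator terms, in particular the case where one of the $C_\lambda$ is empty.
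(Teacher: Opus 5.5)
Your proof is correct, but your main argument takes a different route from the paper's. You induct on $n$, decomposing $T_C$ at the root into copies of $T_{C_0}$ and $T_{C_1}$ (the same decomposition used in Lemma~\ref{Lemma: Sig counting lemma}), and finish with inclusion--exclusion on the indicators. The paper's proof is precisely your ``alternative argument''. It treats $C=\emptyset$ separately, and for $C\neq\emptyset$ it double-counts the edges of $T_C$ as $t_0+t_1+t_2-1=t_1+2t_2$. Here $t_j$ is the number of vertices with $j$ children, and $t_0=\labs{C}$ because every vertex of $T_C$ below level $n$ has a child in $T_C$.

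The edge count is a one-line global identity with no recursion. However, it only works for non-empty $T_C$, since the edge count $\labs{T_C}-1$ fails when $T_C=\emptyset$; hence the separate empty case. Your induction absorbs the empty case into the indicator bookkeeping. The same root decomposition, keeping the indicator $\mathbf{1}_{C_\lambda\neq\emptyset}$ in the inductive hypothesis, would also prove the weighted $b$-ary bound of Lemma~\ref{Lem: 1d b-ary base case}, which the paper instead obtains by a level-by-level telescoping count.
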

\begin{proof}
For $C=\emptyset$ the statement is obvious. For non-empty $C$, let $t_0$, $t_1$, and $t_2$ be the number of vertices in $T_C$ with $0$, $1$, and $2$ children respectively. By counting edges as the number of non-root vertices and as the sum of the child counts we have that
\[ t_0 + t_1 +t_2 -1 = t_1 + 2 t_2.\] 
Since $t_0 = \labs{C}$ and $t_2 = \labs{\Split_1^{(2)}(C)}$, the result follows.
\end{proof}
\begin{proof}[Proof of Proposition~\ref{Prop: 2d binary splitting count}]
Let $A\subset \partial \Gamma_{n,m}$. For each $x \in \{0,1\}^n$ let
\[ A_x:= \{y \in \{0,1\}^m \, : \, (x,y) \in A\}\]
be the vertical fibre above $x$. For each $v \in \inte T_m$ consider
\[ E_v:= \{ x \in \{0,1\}^n \, : \, v \in \Split_1^{(2)}(A_x)\} \subset \partial T_n\]
as a one-dimensional leaf set. By counting in two orders and using Lemma~\ref{Lemma: Baby split lem} we have that
\begin{align} 
\sum_{v\in \inte T_m} \labs{E_v} &= \sum_{x\in \{0,1\}^n} \labs{\Split_1^{(2)}(A_x)} \nonumber \\
&= \sum_{x\in \{0,1\}^n} \left( \labs{A_x} - \mathbf{1}_{A_x \neq \emptyset}\right)\label{eq: first use of baby split}
\geq \labs{A} - 2^n.
\end{align}
Let $u \in \Split_1^{(2)}(E_v)$ for some $v\in \inte T_m$. We claim that $(u,v) \in \Split_2^{(2)}(A)$.
Indeed, by definition of $\Split_1^{(2)}(E_v)$, for each $\varepsilon \in \{0,1\}$ there exists some $x_\varepsilon \in E_v$ such that $u\varepsilon \preceq x_\varepsilon$. By definition of $E_v$, for each $\varepsilon \in \{0,1\}$, $v \in \Split_1^{(2)}(A_{x_\varepsilon})$, and so for each $\delta \in \{0,1\}$ there exists some $y_{\delta}^{\varepsilon} \in A_{x_\varepsilon}$ such that $v\delta \preceq y_{\delta}^{\varepsilon}$. It then follows that $(u\varepsilon,v\delta) \preceq_2 (x_\varepsilon,y_{\delta}^\varepsilon) \in A$ for each $(\varepsilon,\delta )\in \{0,1\}^2$, which proves the claim.

It follows that
\begin{align*}
\labs{\Split_2^{(2)}(A)} &\geq \sum_{v\in  \inte T_m} \labs{\Split_1^{(2)}(E_v)}\\
&\geq\sum_{v\in \inte T_m} \left(\labs{E_v}-1\right)\\
&\geq \labs{A}-2^n - (1+2+\cdots+2^{m-1}) = \labs{A} - (2^n + 2^m-1).
\end{align*}
where in the second line we use Lemma~\ref{Lemma: Baby split lem} again, noting that the bound $\labs{\Split_1^{(2)}(E_v) }\geq \labs{E_v}-1$ holds irrespective of whether $E_v$ is empty or not, and in the last line we use equation~\eqref{eq: first use of baby split}.
\end{proof}
\section{Encoding a product of two binary trees by blocks}
In this section we describe the exact block encoding we use in the proof of Theorem~\ref{Theorem: Arith replicas in Binary Product Tree} and establish some of its relevant properties.

Fix integers $q\geq 1$ and $L\geq 1$, and let $B=4^q$. Set 
\[\Omega_q:= \{0,1\}^q \times \{0,1\}^q\]
and denote by $\Omega_q^{\leq L}$ the height $L$ tree in the $B$-many \textit{block letters} $(\alpha,\beta)\in \Omega_q$.  We will refer to $\Omega_q$ as the \textit{block alphabet} and to $\Omega_q^{\leq L}$ as the \textit{block tree}. As a tree, $\Omega_q^{\leq L}$ is isomorphic to $T_L^{(B)}$, however, we use the notation $\Omega_q^{\leq L}$ to emphasise that the alphabet is $\Omega_q$. For a leaf set $C \subset \Omega_q^L$, we denote by $T_C \subset \Omega_q^{\leq L}$ its ancestor closure in the block tree.

Define the block map $\Theta_q: \Omega_q^{\leq L} \to \Gamma_{qL,qL}$ by
\begin{equation}\label{eq: Block map formula}
\Theta_q\big( (\alpha_1,\beta_1)\ldots(\alpha_t,\beta_t)\big) = (\alpha_1 \ldots \alpha_t,\beta_1 \ldots \beta_t)
\end{equation}
where each $(\alpha_i,\beta_i) \in \Omega_q$ is a block letter. It is easy to see that $\Theta_q$ is an order isomorphism from $\Omega_q^{\leq L}$ to the set
\begin{equation}\label{eq: diag block levels}
\{(u,v) \in \Gamma_{qL,qL} \, : \, \labs{(u,v)} = (tq,tq) \text{ for some }0\leq t \leq L\}\subset \Gamma_{qL,qL}.
\end{equation}
By this we mean that if $\omega,\omega' \in \Omega_q^{\leq L}$ have $\omega \preceq \omega'$, then $\Theta_q(\omega)\preceq_2 \Theta_q(\omega')$, and conversely, if $(u,v)$ and $(x,y)$ belong to the set in equation~\eqref{eq: diag block levels} and satisfy that $(u,v) \preceq_2 (x,y)$, then $\Theta_q^{-1}((u,v)) \preceq \Theta_q^{-1}((x,y))$.

For the remainder of this section, fix some $A\subset \partial \Gamma_{qL,qL}$ and let
\[\tilde{A} := \Theta_q^{-1}(A)\subset  \Omega_{q}^L\]
be its pullback under the block map. Since $\Theta_q$ is an order isomorphism, the ancestor closure $T_{\tilde{A}}$ of $\tilde{A}$ satisfies $\Theta_q(T_{\tilde{A}}) \subset \Gamma_A$.

The key mechanism behind our proof of Theorem~\ref{Theorem: Arith replicas in Binary Product Tree} is the following observation: if a vertex $\omega \in T_{\tilde{A}}$ has at least $2^{q+1}$ children, then Proposition~\ref{Prop: 2d binary splitting count} ensures that its image $\Theta_q(\omega) \in \Gamma_A$ must have a nearby descendant which is a genuine product splitting vertex in $\Gamma_A$. This observation is made precise by the following lemma.
\begin{lem}\label{Lemma: Block branching gives local splitting}
Let $\omega \in T_{\tilde{A}}$ and write $\Theta_q(\omega)=(U,V) \in \Gamma_A$. For any set 
\[ C\subset \{\lambda \in \Omega_q \, : \, \omega \lambda \in T_{\tilde{A}}\}\] 
with $\labs{C}\geq 2^{q+1}$ there exists some $(u,v) \in \{0,1\}^{\leq q-1}\times \{0,1\}^{\leq q-1}$ such that $(Uu,Vv) \in \Split_2^{(2)}(A)$. Moreover, $(u,v)$ has the additional property that for each $(\varepsilon,\delta) \in \{0,1\}^2$ there exists some $\lambda_{\varepsilon,\delta} \in C$ with
\[ (Uu\varepsilon,V v \delta) \preceq_2 \Theta_q(\omega \lambda_{\varepsilon,\delta}).\] 
\end{lem}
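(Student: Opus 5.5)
Each block letter $\lambda=(\alpha,\beta)\in C$ is a pair of binary words of length $q$. The plan is to regard $C$ as a leaf set in the small product tree $\Gamma_{q,q}$ and apply Proposition~\ref{Prop: 2d binary splitting count} there. Define $C' := \{(\alpha,\beta) : (\alpha,\beta)\in C\}\subset \partial\Gamma_{q,q}$, which is just $C$ viewed through the identification $\Omega_q=\{0,1\}^q\times\{0,1\}^q$. Then $\labs{C'}=\labs{C}\geq 2^{q+1}$. Proposition~\ref{Prop: 2d binary splitting count} with $n=m=q$ gives
\[
\labs{\Split_2^{(2)}(C')}\geq 2^{q+1}-(2^q+2^q-1)=1,
\]
so there is a vertex $(u,v)\in\Split_2^{(2)}(C')$. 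Since $\Split_2^{(2)}(C')\subset\inte\Gamma_{q,q}$, we get $(u,v)\in\{0,1\}^{\leq q-1}\times\{0,1\}^{\leq q-1}$, as the statement requires.

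Next we unpack what splitting in $\Gamma_{C'}$ means. For each $(\varepsilon,\delta)\in\{0,1\}^2$ the diagonal child $(u\varepsilon,v\delta)$ lies in $\Gamma_{C'}$. So there is a leaf $\lambda_{\varepsilon,\delta}=(\alpha,\beta)\in C$ with $u\varepsilon\preceq\alpha$ and $v\delta\preceq\beta$. By formula~\eqref{eq: Block map formula}, $\Theta_q(\omega\lambda_{\varepsilon,\delta})=(U\alpha,V\beta)$. Prepending $U$ and $V$ preserves the prefix relations, so
\[
(Uu\varepsilon,Vv\delta)\preceq_2(U\alpha,V\beta)=\Theta_q(\omega\lambda_{\varepsilon,\delta}).
\]
This is exactly the ``moreover'' part of the lemma.

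It remains to show that $(Uu,Vv)\in\Split_2^{(2)}(A)$. Since $\lambda_{\varepsilon,\delta}\in C$, we have $\omega\lambda_{\varepsilon,\delta}\in T_{\tilde A}$. Hence $\Theta_q(\omega\lambda_{\varepsilon,\delta})\in\Theta_q(T_{\tilde A})\subset\Gamma_A$, and $\Gamma_A$ is closed under taking ancestors in $\preceq_2$. Therefore each diagonal child $(Uu\varepsilon,Vv\delta)$ lies in $\Gamma_A$. The vertex $(Uu,Vv)$ is itself an ancestor of these, so it lies in $\Gamma_A$ and splits. Finally, $(Uu,Vv)$ is an interior vertex of $\Gamma_{qL,qL}$, because its coordinates have length at most $(t+1)q-1<qL$ when $\labs{\omega}=t$. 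Here $t<L$ because $\omega$ has children.

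The argument is direct, and no step should be a real obstacle. The only point needing care is the transfer between the local tree $\Gamma_{q,q}$ and the global tree $\Gamma_A$. This uses the block map formula to concatenate prefixes, together with ancestor-closedness of $\Gamma_A$.
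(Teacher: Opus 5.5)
Your proposal is correct and follows the paper's proof essentially step for step. You view $C$ as a leaf set of $\Gamma_{q,q}$ and apply Proposition~\ref{Prop: 2d binary splitting count} to obtain a split $(u,v)$. You then prepend $(U,V)$ and use $\Theta_q(T_{\tilde{A}})\subset\Gamma_A$ together with ancestor-closedness of $\Gamma_A$. Your extra checks (the explicit count $2^{q+1}-(2^{q+1}-1)=1$ and the interiority of $(Uu,Vv)$) are harmless details that the paper leaves implicit.
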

\begin{proof}
Identify $\Omega_q = \partial \Gamma_{q,q}$ so that $C$ is a subset of the leaves of $\Gamma_{q,q}$. Since $\labs{C}\geq 2^{q+1}$, Proposition~\ref{Prop: 2d binary splitting count} ensures that there exists some $(u,v) \in \Split_2^{(2)}(C)$.
By definition of $\Split_2^{(2)}(C)$, for every $(\varepsilon,\delta)\in \{0,1\}^2$ there exists $(\alpha_{\varepsilon,\delta},\beta_{\varepsilon,\delta}) \in C$ with
\begin{equation}\label{eq: extending local splits}
(u\varepsilon,v\delta) \preceq_2 (\alpha_{\varepsilon,\delta},\beta_{\varepsilon,\delta})
\end{equation}
in $\Gamma_C$, which implies that
\[ (Uu \varepsilon,V v \delta) \preceq_2 (U\alpha_{\varepsilon,\delta},V\beta_{\varepsilon,\delta})\]
in $\Gamma_{qL,qL}$. If we can ensure that each
\[(U\alpha_{\varepsilon,\delta},V\beta_{\varepsilon,\delta}) \in \Gamma_A\]
then we will have that $(Uu,Vv) \in \Split_2^{(2)}(A)$. This is immediate since
\[ (U\alpha_{\varepsilon,\delta},V\beta_{\varepsilon,\delta}) = \Theta_q(\omega (\alpha_{\varepsilon,\delta},\beta_{\varepsilon,\delta}))\] 
and by definition of $C$, each $\omega (\alpha_{\varepsilon,\delta},\beta_{\varepsilon,\delta}) \in T_{\tilde{A}}$. Taking $\lambda_{\varepsilon,\delta} = (\alpha_{\varepsilon,\delta},\beta_{\varepsilon,\delta})$ finishes the proof.
\end{proof}
We emphasise that the splitting vertex $(Uu,Vv) \in \Split_2^{(2)}(A)$ produced by Lemma~\ref{Lemma: Block branching gives local splitting} occurs within the next block, within at most $q-1$ additional levels down in each coordinate. Moreover, the relative depths $\labs{u}$ and $\labs{v}$ may vary across different vertices.

Nevertheless, if $T_{\tilde{A}}$ contains a replica of the $2^{q+1}$-ary tree, then the preceding lemma will allow us to produce an embedding of the $4$-ary tree inside $\Gamma_A$ in which the $4$-way branching will map through the $4$ diagonal children of one of these local product splittings $(Uu,Vv)$. This embedding will be \textit{nearly-regular}, in the sense that the only deviation from regularity will come from the bounded variations in the relative depths of $\labs{u}$ and $\labs{v}$.
Similarly, if the replica of the $2^{q+1}$-ary tree inside $T_{\tilde{A}}$ is arithmetic, then the resulting embedding of the $4$-ary tree inside $\Gamma_A$ will be \textit{nearly-arithmetic}. The final part of the proof of Theorem \ref{Theorem: Arith replicas in Binary Product Tree} will then use a colouring argument to turn this \textit{nearly-arithmetic} and \textit{nearly-regular} embedding into a genuine arithmetic replica of $T_k^{(4)}$.
\begin{lem}\label{Lem: S-ary tree in block tree gives nearly arithmetic 4-ary tree}
Let $S:=2^{q+1}$ and $N\geq 0$. Suppose that
\[ \Phi: T_{N+1}^{(S)} \to T_{\tilde{A}}\]
is an arithmetic regular embedding. Then there exists a map
\[ G: T_N^{(4)} \to \Split_2^{(2)}(A)\subset \Gamma_A\]
which is \textit{nearly-arithmetic} and \textit{nearly-regular}, in the following precise sense.
\begin{enumerate}[(i)]
\item For each non-leaf vertex $z \in T_N^{(4)}$, each of its four distinct children are mapped under $G$ to a descendant of a distinct diagonal child of $G(z)$.
\item There exist integers $t_0\geq 0$ and $\Delta\geq 1$ such that for each $z \in T_N^{(4)}$ we have that
\[ \labs{G(z)} = q(t_0+\labs{z}\Delta)(1,1)+(a_z,b_z) \] 
for some $a_z,b_z \in \{0,1,\ldots,q-1\}$.
\end{enumerate}
\end{lem}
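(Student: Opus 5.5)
The plan is to build $G$ level by level, top down, applying Lemma~\ref{Lemma: Block branching gives local splitting} at each image vertex $\Phi(x)$ with $x\in\inte T_{N+1}^{(S)}$. Let the signature of $\Phi$ be $t_0, t_0+\Delta,\ldots,t_0+(N+1)\Delta$ in the block tree. For every $x$ at level $r\leq N$, $\Phi(x)$ is a block word of length $t_0+r\Delta$. Its $S=2^{q+1}$ children $x c$ are mapped to descendants of distinct children $\Phi(x)\lambda$ of $\Phi(x)$. Hence the set $C_x\subset\{\lambda\in\Omega_q : \Phi(x)\lambda\in T_{\tilde A}\}$ of these first letters has size exactly $2^{q+1}$.

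We construct, simultaneously by induction on $|z|$, the map $G$ together with an auxiliary map $\psi: T_N^{(4)}\to T_{N+1}^{(S)}$ satisfying $|\psi(z)|=|z|$. The construction is as follows.

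\begin{enumerate}[(1)]
\item Set $\psi(\emptyset)=\emptyset$.
\item Given $\psi(z)=x$, write $\Theta_q(\Phi(x))=(U,V)$ and apply Lemma~\ref{Lemma: Block branching gives local splitting} to $\omega=\Phi(x)$ and $C=C_x$. This produces $(u,v)\in\{0,1\}^{\leq q-1}\times\{0,1\}^{\leq q-1}$ with $(Uu,Vv)\in\Split_2^{(2)}(A)$, and letters $\lambda_{\varepsilon,\delta}\in C_x$ with $(Uu\varepsilon,Vv\delta)\preceq_2\Theta_q(\Phi(x)\lambda_{\varepsilon,\delta})$.
\item Define $G(z)=(Uu,Vv)$. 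Its level is $q(t_0+|z|\Delta)(1,1)+(|u|,|v|)$, which gives (ii) with $a_z=|u|$ and $b_z=|v|$.
\item Label the four children of $z$ by $(\varepsilon,\delta)\in\{0,1\}^2$. Set $\psi(z(\varepsilon,\delta))=x c_{\varepsilon,\delta}$, where $c_{\varepsilon,\delta}$ is the child of $x$ whose $\Phi$-image lies below $\Phi(x)\lambda_{\varepsilon,\delta}$.
\end{enumerate}

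The letters $\lambda_{\varepsilon,\delta}$ are automatically distinct, because the pairs $(u\varepsilon,v\delta)$ are pairwise incomparable in $\Gamma_{q,q}$. Consequently the four children $c_{\varepsilon,\delta}$ are distinct.

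To verify (i), take a child $z'=z(\varepsilon,\delta)$. By construction $\psi(z')=xc_{\varepsilon,\delta}$ and $\Phi(xc_{\varepsilon,\delta})\succeq\Phi(x)\lambda_{\varepsilon,\delta}$. Since $\Theta_q$ is order preserving,
\[\Theta_q(\Phi(\psi(z')))\succeq_2\Theta_q(\Phi(x)\lambda_{\varepsilon,\delta})\succeq_2(Uu\varepsilon,Vv\delta).\]
Moreover $G(z')=(U'u',V'v')$ extends $\Theta_q(\Phi(\psi(z')))=(U',V')$ coordinatewise, so $G(z')$ is a descendant of the diagonal child $(Uu\varepsilon,Vv\delta)$ of $G(z)$. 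The four children of $z$ go to descendants of four distinct diagonal children, as required.

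One requirement must be checked: Lemma~\ref{Lemma: Block branching gives local splitting} is applied at $\psi(z)$ for every $z$ with $|z|\leq N$, including the leaves of $T_N^{(4)}$. This is allowed precisely because $\Phi$ has height $N+1$, so every $\psi(z)$ with $|z|\leq N$ is still a non-leaf vertex of $T_{N+1}^{(S)}$ with $S$ children available. This is the reason for the shift from $N+1$ to $N$ in the statement.

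The main point of care is the bookkeeping in step (4): the descent must go through the chosen $\lambda$ and then down to a vertex of $\Phi$ at the next arithmetic block level. Only then does the next local split sit at block level $t_0+(|z|+1)\Delta$ plus an offset in $\{0,\ldots,q-1\}^2$.
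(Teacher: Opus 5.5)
Your proposal is correct and follows essentially the same route as the paper: your auxiliary map $\psi$ is the paper's assignment $z\mapsto x_z$, and you apply Lemma~\ref{Lemma: Block branching gives local splitting} at each $\Phi(\psi(z))$ with $|z|\leq N$, using the extra generation of $T_{N+1}^{(S)}$ exactly as the paper does. You also make explicit two points the paper leaves implicit. First, the letters $\lambda_{\varepsilon,\delta}$ are pairwise distinct. Second, you distinguish the children $c_{\varepsilon,\delta}$ of $x$ from the block letters; the paper blurs this by writing $x_z\lambda_{\varepsilon,\delta}$.
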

\begin{proof}
We will build the map $G$ inductively. To each vertex $z \in T_N^{(4)}$ we will associate a vertex $x_z \in T_{N+1}^{(S)}$ with $\labs{z} = \labs{x_z}$. The image $G(z)$ will be one of the local product splittings below $\Theta_q(\Phi(x_z))$ whose existence is guaranteed by Lemma~\ref{Lemma: Block branching gives local splitting}.

Start by associating the root of $T_N^{(4)}$ with the root of $T_{N+1}^{(S)}$. Suppose that $z$ has been associated with some $x_z$, and write
\[ \Theta_q(\Phi(x_z)) = (U_z,V_z).\]
Since $\labs{x_z} = \labs{z} \leq N$, the vertex $x_z$ has $S$ children in $T_{N+1}^{(S)}$. As $\Phi$ is a regular embedding, it maps the $S$ children of $x_z$ below $S$ many distinct children of $\Phi(x_z)$. Let
\[ C_{x_z}=\{ \lambda \in \Omega_q \, : \, \Phi(x_z) \lambda \preceq \Phi(c) \text{ for some child } c \text{ of }x_z\}\]
be the set of immediate block labels under which $\Phi$ maps the children of $x_z$. Thus $\labs{C_{x_z}}=S=2^{q+1}$, and every corresponding block child $\Phi(x_z)\lambda$ belongs to $T_{\tilde{A}}$, since it is an ancestor of some $\Phi(c)$.

Apply Lemma~\ref{Lemma: Block branching gives local splitting} at $\Phi(x_z)$ using the selected labels $C_{x_z}$ to obtain a pair of words $(u_z,v_z) \in \Gamma_{q,q}$ with
\[a_z:=\labs{u_z} < q \qquad \text{ and } \qquad b_z:= \labs{v_z}<q\]
such that
\[ G(z):= (U_z u_z,V_z v_z) \in \Split_{2}^{(2)}(A).\]
The additional property in Lemma~\ref{Lemma: Block branching gives local splitting} ensures that for each $(\varepsilon,\delta)\in \{0,1\}^2$ there exists some $\lambda_{\varepsilon,\delta} \in C_{x_z}$ for which
\begin{equation}\label{eq: G satisfies property (i)}
(U_z u_z \varepsilon,V_z v_z \delta) \preceq_2 \Theta_q(\Phi(x_z \lambda_{\varepsilon,\delta}))
\end{equation}
and so if $\labs{z} < N$, we can continue the construction by associating each of the four distinct children of $z$ to one of the four distinct children of $x_z$ of the form $x_z \lambda_{\varepsilon,\delta}$. At the final generation, when $\labs{z} = N$, the additional generation in $T_{N+1}^{(S)}$ ensures that $x_z$ still has $S$ children required to supply the local splitting.

To verify property (i), let $z'$ be the child of $z$ associated with $x_z \lambda_{\varepsilon,\delta}$. By construction, $x_{z'} = x_z \lambda_{\varepsilon,\delta}$, and $G(z')$ is a descendant of $\Theta_q(\Phi(x_{z'}))$, so by equation~\eqref{eq: G satisfies property (i)} we have that
\[ (U_z u_z \varepsilon,V_z v_z \delta) \preceq_2 \Theta_q(\Phi(x_{z'})) \preceq_2 G(z').\]
That is, the four children of $z$ are mapped below the four distinct diagonal children of $G(z)$.

Finally, since $\Phi$ is arithmetic there exist $t_0\geq 0$ and $\Delta\geq 1$ such that its signature is of the form
\[ t_0, t_0 + \Delta, \ldots, t_0 + N\Delta.\] 
Each $z\in T_N^{(4)}$ has that $\labs{x_z} = \labs{z}$ and so
\[ \labs{U_z} = \labs{V_z} = q(t_0 + \labs{z} \Delta).\]
It then follows that
\[ \labs{G(z)} = \labs{(U_z u_z,V_z v_z)} = q(t_0 + \labs{z} \Delta)(1,1) + (a_z,b_z),\] 
which proves (ii).
\end{proof}

\section{Arithmetic replicas in a product of two binary trees}
We are now ready to show that large leaf-generated subtrees of a product of two binary trees contain arithmetic replicas of $T_k^{(4)}$ as defined in the introduction. We recall the exact definition.
\begin{defn}[Replicas of $T_k^{(4)}$ in the binary product tree]
A map $\phi: T_k^{(4)} \to \Gamma_{n,m}$ is called a \textit{regular embedding} if it satisfies the following two conditions.
\begin{enumerate}[(i)]
\item If $x,y\in T_k^{(4)}$ have $\labs{x} = \labs{y}$ then $\labs{\phi(x)} = \labs{\phi(y)}$.
\item If $x\in \inte T_k^{(4)}$ then $\phi$ maps each of the four distinct children of $x$ to a descendant of a distinct diagonal child of $\phi(x)$.
\end{enumerate} 
The image of $\phi$ is called a replica of $T_k^{(4)}$. Together conditions (i) and (ii) ensure that there exists a sequence of levels
\[  (i_0,j_0) < \cdots < (i_k,j_k) \]
strictly increasing in each coordinate called the \textit{signature of $\phi$} such that $\phi$ maps level $t$ of $T_k^{(4)}$ into level $(i_t,j_t)$ of $\Gamma_{n,m}$. The embedding and replica are called \textit{arithmetic} if there exists $(a,b)\in \Z_{\geq 1}^2$ such that the signature is of the form
\[ (i_t,j_t) = (i_0,j_0) + t (a,b) \qquad \text{for each } t=0,1,\ldots, k.\]
\end{defn}
The following is a refined form of Theorem~\ref{Theorem: Arith replicas in Binary Product Tree}, in which we additionally show that the common difference of the signature can be chosen in the diagonal direction.
\begin{thm}[Refined form of Theorem~\ref{Theorem: Arith replicas in Binary Product Tree}]\label{Thm: Refined binary product theorem}
For every $\alpha>1$ and $k\geq 1$, there exists $N_0 = N_0(\alpha,k)$ such that whenever $n\geq N_0$, every 
\[ A\subset \partial \Gamma_{n,n}\qquad \text{with} \qquad \labs{A}\geq 2^{\alpha n}\]
has an ancestor closure $\Gamma_A$ containing an arithmetic replica of $T_k^{(4)}$. Moreover, the replica can be chosen so that its signature has the form
\[ (i_0,j_0) + t (Q,Q) \qquad \text{for each } t=0,1,\ldots, k\]
for some $Q\geq 1$.
\end{thm}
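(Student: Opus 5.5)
Fix $\alpha>1$ and $k\geq 1$. We choose a block size $q$ so that $\log_B(S-1)<\alpha/2$, where $B=4^q$ and $S=2^{q+1}$. Since $\log_{4^q}(2^{q+1}-1)\leq (q+1)/(2q)\to 1/2<\alpha/2$, a sufficiently large $q=q(\alpha)$ works. Given $n$, we write $n=qL+r$ with $0\leq r<q$ and replace $A$ by a leaf set $A'\subset\partial\Gamma_{qL,qL}$, obtained by truncating every leaf $(x,y)$ to its prefixes of length $qL$. This gives $\Gamma_{A'}\subset\Gamma_A$ and $\labs{A'}\geq \labs{A}/4^{r}\geq 2^{\alpha n}4^{-q}$. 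The pullback $\tilde A=\Theta_q^{-1}(A')\subset\Omega_q^L$ has $\labs{\tilde A}=\labs{A'}\geq B^{\alpha' L}$ for any fixed $\alpha'$ with $\log_B(S-1)<\alpha'<\alpha/2$, once $L$ is large. One complication is that a replica in $\Gamma_{A'}$ at level $(i,j)$ must be transported to $\Gamma_A$ at the same level. This holds because $\Gamma_{A'}$ consists exactly of the elements of $\Gamma_A$ with both coordinate lengths at most $qL$.

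Next we fix a large height $M$, to be chosen in terms of $k$ and $q$. Theorem~\ref{Thm: Mixed arity FW}, with arity $s=S\leq B$, gives an arithmetic regular embedding $\Phi:T_{M+1}^{(S)}\to T_{\tilde A}$ for $L$ large. Lemma~\ref{Lem: S-ary tree in block tree gives nearly arithmetic 4-ary tree} then produces $G:T_M^{(4)}\to\Split_2^{(2)}(A')$ that is nearly-regular and nearly-arithmetic, with offsets $(a_z,b_z)\in\{0,\ldots,q-1\}^2$.

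The main step is synchronising these offsets. We colour each vertex $z\in T_M^{(4)}$ by $(a_z,b_z)$, which uses $r=q^2$ colours. Theorem~\ref{Thm: Equal arity colouring}, with $b=4$ and $r=q^2$, applies provided $M\geq N(4,q^2,k)$. It yields a monochromatic arithmetic replica $\psi:T_k^{(4)}\to T_M^{(4)}$ with signature $s_0+t\Delta'$ and common colour $(a,b)$. The embedding is then $\phi=G\circ\psi$. Its level at depth $t$ is
\[ q(t_0+(s_0+t\Delta')\Delta)(1,1)+(a,b), \]
which is arithmetic with common difference $(Q,Q)$ where $Q=q\Delta\Delta'\geq 1$. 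Condition (i) for $\phi$ follows from condition (i) for $G$ and for $\psi$. Suppose $w$ is a child of $y$ in $T_k^{(4)}$. Then $\psi(w)$ lies below a distinct child $c$ of $\psi(y)$, and $G(c)$ lies below a distinct diagonal child of $G(\psi(y))$. Along the path from $c$ down to $\psi(w)$, $G$ is monotone, because each child's image lies below its parent's image. So $G(\psi(w))\succeq_2 G(c)$, and distinct children of $\psi(y)$ go under distinct diagonal children.

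The point needing the most care is the compatibility of the colouring theorem with a regular embedding. Monochromaticity has to guarantee equal levels inside each generation, and it does: $\psi$ is level-preserving, so all images at depth $t$ share the same $\labs{z}$ and the same colour $(a,b)$, hence the same level in $\Gamma$. Finally, $N_0$ is chosen so that $L=\lfloor n/q\rfloor$ is large enough for Theorem~\ref{Thm: Mixed arity FW}, with parameters $(B,S,M+1,\alpha')$, to apply.
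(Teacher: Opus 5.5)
Your proposal is correct and follows the paper's proof essentially step for step. It makes the same choice of $q$ with $\log_B(S-1)<\alpha/2$, truncates to $\Gamma_{qL,qL}$, applies the mixed-arity Furstenberg--Weiss theorem in the block tree, builds the nearly-regular, nearly-arithmetic map $G$, and uses the $q^2$-colouring by offsets $(a_z,b_z)$ to synchronise levels; your explicit check that $G\circ\psi$ keeps the diagonal branching, via monotonicity of $G$ along paths, fills in a detail the paper asserts without proof.
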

\begin{proof}
First choose an integer $q=q(\alpha)$ large enough in terms of $\alpha$ so that
\begin{equation}
\frac{\log_2(2^{q+1}-1)}{q} < \alpha.
\end{equation} 
Put
\[ \Omega_q:= \{0,1\}^q \times \{0,1\}^q, \qquad B: = 4^q, \qquad \text{and}\quad S:=2^{q+1}.\]
Then $2\leq S \leq B$, and our choice of $q$ implies that
\begin{equation}\label{eq: condition letting us pick gamma}
 \log_B(S-1) < \frac{\alpha}{2}.
 \end{equation}

By Theorem~\ref{Thm: Equal arity colouring} we can pick some $N=N(\alpha,k)$ large enough so that every $q^2$-colouring of $T_N^{(4)}$ contains a monochromatic arithmetic replica of $T_k^{(4)}$.

Write
\[ n =qL + r \qquad \text{ for some } 0\leq r < q,\]
and let
\[ A':= \{(x,y) \in \partial\Gamma_{qL,qL} \, : \, (x,y)\preceq_2 (u,v) \text{ for some }(u,v) \in A\}.\]
Since $\Gamma_{A'} = \Gamma_A\cap \Gamma_{qL,qL}$, it suffices to show that $\Gamma_{A'}$ contains an arithmetic replica of $T_k^{(4)}$ for $n$ large enough.
There are at most $2^{2r}$ possible extensions of any $(x,y)\in A'$ to a leaf in $A$, so
\begin{equation}\label{eq: dealing with harmless truncation}
\labs{A'} \geq 2^{-2r} \labs{A} \geq 2^{\alpha (qL+r) - 2r}.
\end{equation}
Let $\Theta_q: \Omega_q^{\leq L} \to \Gamma_{qL,qL}$ be the block map from equation~\eqref{eq: Block map formula} and set
\[ \tilde{A}:= \Theta_q^{-1}(A') \subset \Omega_q^{L}.\] 
Since $\Theta_q$ is a bijection onto the $q$-synchronous levels of $\Gamma_{qL,qL}$, we have $|\tilde{A}| = \labs{A'}$, and so equation~\eqref{eq: dealing with harmless truncation} gives
\[ \log_B\left(|\tilde{A}|\right) \geq \frac{\alpha}{2}L + \frac{(\alpha-2)r}{2q}.\]
The second term is bounded independently of $L$, so by equation \eqref{eq: condition letting us pick gamma} there exists some $\gamma \in (\log_B(S-1),\alpha/2)$ such that
\begin{equation}\label{eq: leaves in block tree are big enough}
 |\tilde{A}| \geq B^{\gamma L}
\end{equation}
for sufficiently large $n$. Since the block tree $\Omega_q^{\leq L}$ is isomorphic to a $B$-ary tree and $\gamma > \log_B(S-1)$, equation~\eqref{eq: leaves in block tree are big enough} ensures, for $n$ large enough, that $|\tilde{A}|$ is large enough for Theorem~\ref{Thm: Mixed arity FW} to guarantee the existence of an arithmetic regular embedding
\[ \Phi: T_{N+1}^{(S)} \to T_{\tilde{A}}.\]
Let $\Phi$ have signature
\[ \{t_0,t_0 + \Delta, \ldots, t_0 + (N+1)\Delta\}\]
for some $\Delta\geq 1$. By Lemma~\ref{Lem: S-ary tree in block tree gives nearly arithmetic 4-ary tree} there exists a map
\[ G: T_N^{(4)} \to \Split_2^{(2)}(A') \subset \Gamma_A\]
which preserves the four-way branching structure (through the diagonal children of each image vertex) and satisfies that for each $z \in T_N^{(4)}$,
\[ \labs{G(z)} = q(t_0 + \labs{z}\Delta,t_0 + \labs{z}\Delta) + (a_z,b_z)\]
for some $(a_z,b_z)\in \{0,1,\ldots,q-1\}^2$. The assignment
\[ z \mapsto (a_z,b_z)\]
gives a $q^2$-colouring of $T_N^{(4)}$, so by our choice of $N$ there exists a monochromatic arithmetic regular embedding
\[ f: T_k^{(4)} \to T_N^{(4)}.\]
Let $f$ have signature
\[ \{r_0,r_0 +D, \ldots, r_0 + kD\}\]
for some $D\geq 1$ and let its common colour be $(a,b) \in \{0,1,\ldots,q-1\}^2$. The map
\[ F:= G\circ f: T_k^{(4)} \to \Gamma_A\]
is then an arithmetic regular embedding of $T_k^{(4)}$. Indeed, for every $z\in T_k^{(4)}$ we can calculate
\begin{align*}
\labs{F(z)} = \labs{G(f(z))} &= q\left(t_0 + \labs{f(z)}\Delta,t_0 + \labs{f(z)}\Delta\right) + (a,b)\\
& = \underbrace{q(t_0 + r_0 \Delta)(1,1) + (a,b)}_{:=\mathbf{v}} + \labs{z} \underbrace{q D \Delta }_{:=Q}(1,1),
\end{align*}
and so $F$ has signature
\[ \mathbf{v}, \mathbf{v} + Q(1,1),\ldots, \mathbf{v} + kQ (1,1)\]
as claimed.
\end{proof}
\section{Leaf-generated results for \texorpdfstring{$d$}{d}-fold products of \texorpdfstring{$b$}{b}-ary trees}\label{Sec: general case}
For $d\geq1$, $b\geq2$, and $\mathbf n=(n_1,\ldots,n_d)\in \Z_{\geq 0}^d$ the $d$-fold $b$-ary product tree of height $\mathbf{n}$ is the set
\[\Gamma_{\mathbf n}^{(b)}:=T_{n_1}^{(b)}\times\cdots\times T_{n_d}^{(b)}\]
with the product ordering $\preceq_d$ where $(u_1,\ldots,u_d) \preceq_d (v_1,\ldots,v_d)$ whenever $u_i \preceq v_i$ for each $i=1,\ldots,d$. The level of any $(u_1,\ldots,u_d)\in \Gamma_{\mathbf{n}}^{(b)}$ is defined to be $\labs{(u_1,\ldots,u_d)} := (\labs{u_1},\ldots,\labs{u_d})$. The leaves and the interior of the product tree are the sets
\[ \partial \Gamma_{\mathbf{n}}^{(b)}: = \partial T_{n_1}^{(b)}\times \cdots \times \partial T_{n_d}^{(b)} \]
and
\[ \inte \Gamma_{\mathbf{n}}^{(b)}: = \inte T_{n_1}^{(b)} \times \cdots \times \inte T_{n_d}^{(b)}\]
respectively. For any $A\subset \partial \Gamma_{\mathbf{n}}^{(b)}$ its ancestor closure is denoted
\[ \Gamma_A = \{ \mathbf{u} \in \Gamma_{\mathbf{n}}^{(b)} \, : \, \mathbf{u} \preceq_d \mathbf{x} \text{ for some } \mathbf{x} \in A\}.\]  
For any $\mathbf{u}=(u_1,\ldots,u_d), \mathbf{v} = (v_1,\ldots,v_d) \in \Gamma_{\mathbf{n}}^{(b)}$ we write
\[ \mathbf{u}\mathbf{v} = (u_1 v_1,\ldots,u_d v_d).\]
\subsection{Splittings in leaf-generated subsets of \texorpdfstring{$\Gamma_{\mathbf{n}}^{(b)}$}{Gamma(n,b)}}
In this subsection we prove our splitting inequality in the general product case.
\begin{defn}
Let $A\subset \partial \Gamma_{\mathbf{n}}^{(b)}$. An interior vertex $\mathbf{u} \in \Gamma_A$ \textit{splits} if
\begin{equation}\label{eq: d dim diag children}
\mathbf{u}\bm{\epsilon} \in \Gamma_A \quad \text{ for every }\bm{\epsilon} \in \Lambda_b^d.
\end{equation}
The $b^d$ vertices in equation~\eqref{eq: d dim diag children} are called the \textit{diagonal children} of $\mathbf{u}$. We denote by $\Split_d^{(b)}(A)$ the set of all vertices in $\Gamma_A$ that split.
\end{defn}
Once we move past the binary case of $b=2$, our splitting inequality no longer counts the size of $\Split_d^{(b)}(A)$ directly, but rather will be stated in terms of a weighted splitting count defined for any $A\subset \partial \Gamma_{\mathbf{n}}^{(b)}$ to be
\[ \mathcal{S}_{d,b}^{\mathbf{n}}(A): = \sum_{\mathbf{u} \in \Split_d^{(b)}(A)} \prod_{i=1}^{d} \left(b-1\right)^{n_i - \labs{u_i}-1}.\]
Notice that for $b=2$ we have that $\mathcal{S}_{d,2}^{\mathbf{n}}(A) = \labs{\Split_d^{(2)}(A)}$. In any case, our splitting inequality will ensure that $\Split_d^{(b)}(A)$ is non-empty whenever $A\subset \partial \Gamma_{\mathbf{n}}^{(b)}$ has cardinality strictly larger than
\begin{equation}\label{eq: general cutoff}
C_{d,b}(\mathbf{n}): = \prod_{i=1}^d b^{n_i} - \prod_{i=1}^{d}\left(b^{n_i} - (b-1)^{n_i}\right).\end{equation}
\begin{pro}\label{Prop: General splitting lemma}
For every $b\geq 2$, $d\geq 1$, $\mathbf{n}\in \Z_{\geq 0}^d$ and $A\subset \partial \Gamma_{\mathbf{n}}^{(b)}$ we have that
\[ \mathcal{S}_{d,b}^{\mathbf{n}}(A) \geq \labs{A} - C_{d,b}(\mathbf{n}).\]
In particular, if $\labs{A}> C_{d,b}(\mathbf{n})$, then $\Split_d^{(b)}(A) \neq \emptyset$.
\end{pro}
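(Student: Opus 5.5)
Generalise the two-step fibre argument of Proposition~\ref{Prop: 2d binary splitting count} to an induction on the dimension $d$, with a one-dimensional weighted splitting identity as base case. For $C\subset\partial T_n^{(b)}$ write $\Split_1^{(b)}(C)$ for the vertices of $T_C$ with all $b$ children present, and set
\[\mathcal{S}^{n}_{1,b}(C)=\sum_{u\in\Split_1^{(b)}(C)}(b-1)^{n-\labs{u}-1}.\]
Since $C_{1,b}(n)=(b-1)^n$, the base case is $\mathcal{S}^n_{1,b}(C)\geq \labs{C}-(b-1)^n$. This follows from Lemma~\ref{Lemma: Sig counting lemma} with $s=b$, but a direct induction on $n$ is cleaner. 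Decompose $C$ by its first letter. If the root has at most $b-1$ children, then $\labs{C}\leq\sum_\lambda\bigl(\mathcal{S}(C_\lambda)+(b-1)^{n-1}\bigr)$ over at most $b-1$ nonempty branches, giving $\labs{C}\leq\mathcal{S}(C)+(b-1)^n$. If the root splits, it contributes an extra $(b-1)^{n-1}$, which exactly absorbs the $b$-th copy of $(b-1)^{n-1}$.

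For the inductive step, write $\mathbf n=(\mathbf n',n_d)$ and fibre $A$ over its last coordinate. For each $x\in\partial T_{n_d}^{(b)}$ let $A^x\subset\partial\Gamma^{(b)}_{\mathbf n'}$ be the fibre. For each $\mathbf u'\in\inte\Gamma^{(b)}_{\mathbf n'}$ set
\[E_{\mathbf u'}=\{x:\mathbf u'\in\Split^{(b)}_{d-1}(A^x)\}\subset\partial T^{(b)}_{n_d}.\]
As in the binary proof, if $u_d\in\Split_1^{(b)}(E_{\mathbf u'})$ then $(\mathbf u',u_d)\in\Split_d^{(b)}(A)$. Indeed, each child $u_d\epsilon_d$ lies below some $x_{\epsilon_d}\in E_{\mathbf u'}$, and each $\mathbf u'\bm\epsilon'$ lies below a leaf of $A^{x_{\epsilon_d}}$. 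Since the weight is multiplicative, this gives
\[\mathcal{S}^{\mathbf n}_{d,b}(A)\geq\sum_{\mathbf u'}w(\mathbf u')\,\mathcal{S}^{n_d}_{1,b}(E_{\mathbf u'}),\]
with $w(\mathbf u')=\prod_{i<d}(b-1)^{n_i-\labs{u_i}-1}$. Apply the base case to each $E_{\mathbf u'}$ and exchange the order of summation. The sum $\sum_{\mathbf u'}w(\mathbf u')\labs{E_{\mathbf u'}}$ equals $\sum_x\mathcal{S}^{\mathbf n'}_{d-1,b}(A^x)$, and by induction this is at least $\sum_x\bigl(\labs{A^x}-C_{d-1,b}(\mathbf n')\bigr)$. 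Hence
\[\mathcal{S}^{\mathbf n}_{d,b}(A)\geq\labs{A}-b^{n_d}C_{d-1,b}(\mathbf n')-(b-1)^{n_d}\sum_{\mathbf u'\in\inte\Gamma_{\mathbf n'}}w(\mathbf u').\]

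It remains to evaluate the weight sum. It factorises as $\prod_{i<d}\sum_{j=0}^{n_i-1}b^j(b-1)^{n_i-j-1}=\prod_{i<d}\bigl(b^{n_i}-(b-1)^{n_i}\bigr)$ by the geometric-sum (telescoping) identity. Write $P'=\prod_{i<d}b^{n_i}$ and $D'=\prod_{i<d}\bigl(b^{n_i}-(b-1)^{n_i}\bigr)$. Then
\[b^{n_d}(P'-D')+(b-1)^{n_d}D'=b^{n_d}P'-\bigl(b^{n_d}-(b-1)^{n_d}\bigr)D'=C_{d,b}(\mathbf n),\]
which closes the induction. The \emph{in particular} clause follows because $\mathcal{S}>0$ forces a nonempty splitting set.

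The main obstacle is the bookkeeping. First, the one-dimensional base case must be stated for possibly empty $C$, where it reads $0\geq-(b-1)^n$. Second, the split condition in the fibre argument must use interior vertices only, so that the weights match exactly; in particular $\mathbf u'$ must range over interior vertices for the $E_{\mathbf u'}$ to make sense. Finally, the degenerate cases where some $n_i=0$ need a check: there the interior is empty and $C_{d,b}(\mathbf n)=\prod_i b^{n_i}\geq\labs{A}$, so the inequality is trivial.
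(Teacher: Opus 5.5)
Your proof is correct and is essentially the paper's. It uses the same induction on $d$ through fibres and the auxiliary sets $E$, except that you apply the $(d-1)$-dimensional hypothesis to the fibres and the one-dimensional bound to the sets $E_{\mathbf u'}\subset\partial T^{(b)}_{n_d}$, whereas the paper does the reverse. You also prove the base case by induction on $n$ over the first letter, instead of the paper's telescoping of $f_i\geq a_{i+1}-(b-1)a_i$.

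One aside needs correcting. The base case is not a consequence of the statement of Lemma~\ref{Lemma: Sig counting lemma} with $s=b$, because the signature-weighted sum there can exceed $(b-1)^n+\mathcal S^n_{1,b}(C)$. For example, with $b=3$, $n=2$ and $C=\{00,01,02,10,20\}$, that sum is $8$ while $(b-1)^n+\mathcal S^n_{1,b}(C)=7$. So only the lemma's first-letter induction technique carries over, and that is exactly what your direct argument uses.
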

We will prove Proposition~\ref{Prop: General splitting lemma} by induction on the dimension $d$ in much the same way that we proved Proposition~\ref{Prop: 2d binary splitting count}. We first establish the base case of $d=1$.
\begin{lem}\label{Lem: 1d b-ary base case}
For every $n\geq 0$, $b\geq 2$, and $A\subset \partial T_n^{(b)}$ we have that
\[ \sum_{u \in \Split_1^{(b)}(A)} (b-1)^{n-\labs{u}-1} \geq \labs{A} -(b-1)^n.\] 
\end{lem}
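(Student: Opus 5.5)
We prove Lemma~\ref{Lem: 1d b-ary base case} by induction on $n$, mimicking the recursive decomposition used in the proof of Lemma~\ref{Lemma: Sig counting lemma}. Here $\Split_1^{(b)}(A)$ denotes the set of vertices of $T_A$ all of whose $b$ children lie in $T_A$. Write
\[\mathcal S_n(A):=\sum_{u\in\Split_1^{(b)}(A)}(b-1)^{n-\labs{u}-1}.\]

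\emph{Base case $n=0$.} The split set is empty and $\labs{A}\leq 1=(b-1)^0$, so the inequality reads $0\geq \labs{A}-1$, which holds.

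\emph{Inductive step.} Suppose the lemma holds for $n-1$. For $\lambda\in\Lambda_b$ let
\[A_\lambda=\{x\in\Lambda_b^{n-1}:\lambda x\in A\},\]
so that $\labs{A}=\sum_\lambda\labs{A_\lambda}$. A non-root vertex $\lambda u$ splits in $T_A$ exactly when $u$ splits in $T_{A_\lambda}$. Its weight is $(b-1)^{n-(\labs{u}+1)-1}=(b-1)^{(n-1)-\labs{u}-1}$, which is precisely its weight in the height-$(n-1)$ count for $A_\lambda$. Hence
\[\mathcal S_n(A)=\mathbf 1_{\text{root splits}}\,(b-1)^{n-1}+\sum_{\lambda}\mathcal S_{n-1}(A_\lambda).\]
The induction hypothesis then gives
\[\mathcal S_n(A)\geq \mathbf 1_{\text{root splits}}(b-1)^{n-1}+\labs{A}-\#\{\lambda:A_\lambda\neq\emptyset\}\,(b-1)^{n-1}.\]
Here we applied the hypothesis only to the nonempty $A_\lambda$, since for empty $A_\lambda$ both sides of the inequality are $0$. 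This tightening is the key point: the bound $\labs{A_\lambda}-(b-1)^{n-1}$ would be negative for empty fibres, and we discard those terms.

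\emph{Case analysis.} Let $m$ be the number of nonempty fibres, so the root splits exactly when $m=b$.
\begin{itemize}
\item If $m\leq b-1$, the lower bound is at least $\labs{A}-(b-1)(b-1)^{n-1}=\labs{A}-(b-1)^n$.
\item If $m=b$, the lower bound is $(b-1)^{n-1}+\labs{A}-b(b-1)^{n-1}=\labs{A}-(b-1)^n$.
\end{itemize}
Either way the claim follows.

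The main subtlety is exactly this treatment of empty fibres, together with checking that the root's weight $(b-1)^{n-1}$ compensates precisely for the $b$-th nonempty branch. Everything else is bookkeeping of weights under the shift of levels by one.
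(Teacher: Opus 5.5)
Your proof is correct, but it is organised differently from the paper's. The paper does not induct. It lets $a_i$ and $f_i$ be the numbers of vertices and of splitting vertices of $T_A$ at level $i$. A non-splitting vertex has at most $b-1$ children, so $f_i\geq a_{i+1}-(b-1)a_i$. Multiplying by $(b-1)^{n-i-1}$ and summing over $i$ telescopes to $a_n-(b-1)^n a_0$.

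Your recursion on the root fibres $A_\lambda$ uses the same local fact. Your case split $m\leq b-1$ versus $m=b$ is exactly the inequality $\mathbf{1}_{\text{root splits}}\geq m-(b-1)$ at the root, weighted by $(b-1)^{n-1}$. Unrolling your induction over all vertices reproduces the paper's level-by-level sum.

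Your choice to apply the hypothesis only to nonempty fibres amounts to proving the sharper bound $\labs{A}-(b-1)^n\mathbf{1}_{A\neq\emptyset}$. The paper gets this directly from its final expression, since $a_0=\mathbf{1}_{A\neq\emptyset}$.

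The paper's route is shorter and needs no empty-fibre bookkeeping. Yours fits the root-decomposition template already used in the proof of Lemma~\ref{Lemma: Sig counting lemma}.
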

\begin{proof}
If $n=0$ the result is immediate so suppose that $n\geq 1$. Let $a_i$ be the number of vertices in $T_A$ at level $i$, and let $f_i$ be the number of splitting vertices at level $i$. A splitting vertex has $b$ children and a non-splitting vertex has at most $b-1$ children and so for any $0\leq i <n$ we have that
\[ a_{i+1} \leq b f_i + (b-1)(a_i-f_i) = (b-1)a_i + f_i,\]
or equivalently
\[ f_i \geq a_{i+1} - (b-1) a_i.\] 
Multiplying by $(b-1)^{n-i-1}$ and summing over levels gives
\begin{align*}
\sum_{u\in \Split_1^{(b)}(A)} (b-1)^{n-\labs{u}-1} & = \sum_{i=0}^{n-1}f_i (b-1)^{n-i-1}\\
& \geq \sum_{i=0}^{n-1} \left[ (b-1)^{n-i-1}a_{i+1} - (b-1)^{n-i}a_i \right]\\
&= a_n - (b-1)^n a_0 \geq \labs{A} - (b-1)^n
\end{align*}
since the final sum telescopes.
\end{proof}
\begin{proof}[Proof of Proposition~\ref{Prop: General splitting lemma}]
We proceed by induction on $d$. The case $d=1$ is exactly the previous lemma, since $C_{1,b}(n)=(b-1)^n$.

If any coordinate height $n_i=0$, then no split is possible and $C_{d,b}(\mathbf{n}) = \prod_{i=1}^d b^{n_i}$, since the second term in equation~\eqref{eq: general cutoff} vanishes. Thus the desired inequality is trivial.

So let $d\geq 2$ and $\mathbf{n} =(\mathbf{n}',m) \in \Z_{\geq 1}^d$ where $\mathbf{n}'\in \Z_{\geq 1}^{d-1}$ are the first $d-1$ components of $\mathbf{n}$. Assume the result has been proved for $d-1$, and let $A\subset \partial \Gamma_{\mathbf{n}}^{(b)}$. For each $\mathbf{x}\in \partial \Gamma_{\mathbf{n}'}^{(b)}$ let
\[ A_{\mathbf{x}}:= \{ y \in \partial T_m^{(b)} \, : \, (\mathbf{x},y)\in A\}\]
be the $d^\text{th}$ coordinate fibre of $A$ above $\mathbf{x}$, and for each $v \in \inte T_m^{(b)}$ let
\[ E_v:=\{\mathbf{x} \in \partial \Gamma_{\mathbf{n}'}^{(b)} \, : \, v \in \Split_{1}^{(b)}(A_{\mathbf{x}})\}.\]
By counting in two orders and using Lemma~\ref{Lem: 1d b-ary base case} we have that
\begin{align}
\sum_{v\in \inte T_m^{(b)}} (b-1)^{m-\labs{v}-1}\labs{E_v} &= \sum_{\mathbf{x}\in \partial \Gamma_{\mathbf{n}'}^{(b)}} \sum_{v \in \Split_{1}^{(b)}(A_\mathbf{x})} (b-1)^{m-\labs{v}-1} \nonumber \\
& \geq \sum_{\mathbf{x}\in \partial \Gamma_{\mathbf{n}'}^{(b)}} \left(\labs{A_{\mathbf{x}}} - (b-1)^m \right) \nonumber \\
&= \labs{A} - \labs{\partial \Gamma_{\mathbf{n}'}^{(b)}}(b-1)^m. \label{eq: general split eq 1}
\end{align}
If $\mathbf{u} \in \Split_{d-1}^{(b)}(E_v)$, then we claim that $(\mathbf{u},v) \in \Split_{d}^{(b)}(A)$. Indeed, for each $\bm{\epsilon} \in \Lambda_{b}^{d-1}$, the splitting of $E_v$ gives some $\mathbf{x}_{\bm{\epsilon}} \in E_v$ with $\mathbf{u}\bm{\epsilon} \preceq_{d-1} \mathbf{x}_{\bm{\epsilon}}$. For each such $\mathbf{x}_{\bm{\epsilon}}$, $v\in \Split_{1}^{(b)}(A_{\mathbf{x}_{\bm{\epsilon}}})$, and so for each $\delta \in \Lambda_b$ there exists some $(\mathbf{x}_{\bm{\epsilon}},y_\delta^{\bm{\epsilon}}) \in A$ with $(\mathbf{u} \bm{\epsilon},v \delta)\preceq_d (\mathbf{x}_{\bm{\epsilon}},y_\delta^{\bm{\epsilon}})$.

It follows that
\begin{align*}
\mathcal{S}_{d,b}^\mathbf{n}(A)&=  \sum_{(\mathbf{u},v) \in \Split_d^{(b)}(A)} \prod_{i=1}^{d-1} \left(b-1\right)^{n_i - \labs{u_i}-1}(b-1)^{m-\labs{v}-1}\\
&\geq \sum_{v \in \inte T_m^{(b)}}\sum_{\mathbf{u} \in \Split_{d-1}^{(b)}(E_v)}\prod_{i=1}^{d-1} \left(b-1\right)^{n_i - \labs{u_i}-1}(b-1)^{m-\labs{v}-1}\\
&= \sum_{v \in \inte T_m^{(b)}} \mathcal{S}_{d-1,b}^{\mathbf{n}'}(E_v) (b-1)^{m-\labs{v}-1}\\
& \geq \sum_{v \in \inte T_m^{(b)}} \left( \labs{E_v} - C_{d-1,b}(\mathbf{n}')\right) (b-1)^{m-\labs{v}-1}\\
&\geq \labs{A} - \labs{\partial \Gamma_{\mathbf{n}'}^{(b)}}(b-1)^m - C_{d-1,b}(\mathbf{n}') \sum_{v\in \inte T_m^{(b)}} (b-1)^{m-\labs{v}-1}\\
& = \labs{A} - C_{d,b}(\mathbf{n}),
\end{align*}
where in the second inequality we use the inductive hypothesis, in the third inequality we use equation~\eqref{eq: general split eq 1}. The final equality follows via direct computation using the definition of $C_{d,b}(\mathbf{n})$ since we may use geometric summation to calculate
\[ \sum_{v\in \inte T_m^{(b)}} (b-1)^{m-\labs{v}-1} = \sum_{i=0}^{m-1} b^i (b-1)^{m-i-1} = b^m - (b-1)^m.\]
\end{proof}
\subsection{Arithmetic replicas in \texorpdfstring{$\Gamma_{\mathbf{n}}^{(b)}$}{Gamma(n,b)}}
With Proposition~\ref{Prop: General splitting lemma} in hand, we have all the tools needed to prove that sufficiently large leaf-generated subsets of $\Gamma_{\mathbf{n}}^{(b)}$ admit arithmetically structured regular embeddings of $T_k^{\left(b^d\right)}$ in which the $b^d$-ary branching structure of $T_k^{\left(b^d\right)}$ embeds through all $b^d$ diagonal children of each image vertex, i.e. through a genuine $b$-ary $d$-dimensional product splitting.

\begin{defn}[Replicas of $T_k^{\left(b^d\right)}$ in $\Gamma_{\mathbf{n}}^{(b)}$]
A map $\phi : T_k^{\left(b^d \right)} \to \Gamma_{\mathbf{n}}^{(b)}$ is called a \textit{regular embedding} if it satisfies the following two conditions.
\begin{enumerate}[(i)]
\item If $x,y\in T_k^{\left(b^d\right)}$ have $\labs{x} = \labs{y}$ then $\labs{\phi(x)} = \labs{\phi(y)}$.
\item If $x\in \inte T_k^{\left(b^d\right)}$ then $\phi$ maps each of the $b^d$ distinct children of $x$ to a descendant of a distinct diagonal child of $\phi(x)$.
\end{enumerate}
The image of $\phi$ is called a replica of $T_k^{\left(b^d\right)}$. Together conditions (i) and (ii) ensure that there exists a sequence of levels
\[ \mathbf{i}_0 < \cdots < \mathbf{i}_k\]
strictly increasing in each coordinate called the \textit{signature of $\phi$} such that $\phi$ maps level $t$ of $T_k^{\left(b^d\right)}$ into level $\mathbf{i}_t$ of $\Gamma_{\mathbf{n}}^{(b)}$. The embedding and replica are called \textit{arithmetic} if there exists $\mathbf{a}\in \Z^{d}_{\geq 1}$ such that the signature is of the form
\[ \mathbf{i}_t = \mathbf{i}_0 + t \mathbf{a} \quad \text{for each }t=0,1,\ldots,k.\] 
\end{defn}
Set
\[ \alpha_{\text{crit}}(d,b):= d-1 + \log_b(b-1)\]
and write $\mathbf{1} = (1,\ldots,1)\in \Z^d$. 
The following is a refined form of Theorem~\ref{Thm: Arith replicas in general leaf generated product trees}, with the additional conclusion that the common difference of the signature may be chosen in the diagonal direction.
\begin{thm}[Refined form of Theorem~\ref{Thm: Arith replicas in general leaf generated product trees}]\label{Thm: Main general theorem}
For every $b\geq 2$, $d\geq 1$, $k\geq 1$ and $\alpha > \alpha_{\text{crit}}(d,b)$ there exists $N_0 = N_0(b,d,k,\alpha)$ such that whenever $n\geq N_0$, every
\[ A\subset \partial \Gamma_{(n,\ldots,n)}^{(b)} \qquad \text{with} \qquad \labs{A}\geq b^{\alpha n} \]
has an ancestor closure $\Gamma_A$ containing an arithmetic replica of $T_k^{\left(b^d\right)}$. Moreover, the replica can be chosen so that its signature has the form
\[ \mathbf{v} + t Q \mathbf{1} \qquad \text{for each } t=0,1,\ldots,k\]
for some $Q\geq 1$.
\end{thm}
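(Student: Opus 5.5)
We will run the same blocking and synchronisation argument used for Theorem~\ref{Thm: Refined binary product theorem}, with Proposition~\ref{Prop: General splitting lemma} replacing Proposition~\ref{Prop: 2d binary splitting count}. Fix $q\geq 1$ and use the block alphabet $\Omega_q:=(\Lambda_b^q)^d$, so that $B:=b^{dq}$, with the block map $\Theta_q:\Omega_q^{\leq L}\to\Gamma^{(b)}_{(qL,\ldots,qL)}$ defined coordinatewise as in \eqref{eq: Block map formula}. The threshold arity will be
\[
S:=C_{d,b}(q\mathbf 1)+1=b^{dq}-(b^q-(b-1)^q)^d+1 .
\]
By Proposition~\ref{Prop: General splitting lemma}, any set of $S$ block letters, viewed as leaves of $\Gamma^{(b)}_{q\mathbf 1}$, has a full product split. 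This yields the analogue of Lemma~\ref{Lemma: Block branching gives local splitting}: if $\omega\in T_{\tilde A}$ and $C$ is a set of at least $S$ admissible children labels, then there is a split $\Theta_q(\omega)\mathbf u\in\Split_d^{(b)}(A)$ with $\labs{u_i}\leq q-1$. Moreover, each of the $b^d$ diagonal children of this split lies below some $\Theta_q(\omega\lambda)$ with $\lambda\in C$. The proof is identical to the binary case.

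The key step, and the one I expect to be the main obstacle, is checking that $q$ can be chosen with $\log_B(S-1)<\alpha/d$, so that the hypothesis of Theorem~\ref{Thm: Mixed arity FW} is met. Write $S-1=b^{dq}-(b^q-(b-1)^q)^d$. Expanding $(b^q-x)^d$ with $x=(b-1)^q$ gives
\[
S-1\leq d\,b^{(d-1)q}(b-1)^q .
\]
Hence
\[
\frac{1}{dq}\log_b(S-1)\leq \frac{1}{d}\Bigl(d-1+\log_b(b-1)+\frac{\log_b d}{q}\Bigr)=\frac{\alpha_{\mathrm{crit}}(d,b)}{d}+\frac{\log_b d}{dq}.
\]
This is below $\alpha/d$ once $q$ is large, since $\alpha>\alpha_{\mathrm{crit}}(d,b)$. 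We also need $2\leq S\leq B$. For $d=1$, $b=2$ one can take $S\geq 2$ anyway by raising $S$ to $\max(S,2)$, since $\log_B 1=0<\alpha$.

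With $q$ fixed, choose $N$ by Theorem~\ref{Thm: Equal arity colouring} so that every $q^d$-colouring of $T_N^{(b^d)}$ has a monochromatic arithmetic replica of $T_k^{(b^d)}$. Write $n=qL+r$ and truncate $A$ to $A'\subset\partial\Gamma^{(b)}_{(qL,\ldots,qL)}$. This gives $\labs{A'}\geq b^{-dr}b^{\alpha n}$, so $\tilde A:=\Theta_q^{-1}(A')$ satisfies $|\tilde A|\geq B^{\gamma L}$ for some $\gamma\in(\log_B(S-1),\alpha/d)$ once $n$ is large. Theorem~\ref{Thm: Mixed arity FW} in the $B$-ary block tree then gives an arithmetic regular embedding $\Phi:T_{N+1}^{(S)}\to T_{\tilde A}$ with signature $t_0+j\Delta$.

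The inductive construction of Lemma~\ref{Lem: S-ary tree in block tree gives nearly arithmetic 4-ary tree} then runs verbatim with the local lemma above. At each $z$ we use the $S$ labels under which $\Phi$ maps the children of $x_z$, pick the local split, and send the $b^d$ children of $z$ to the corresponding children $x_z\lambda_{\bm\epsilon}$. The resulting map $G:T_N^{(b^d)}\to\Split_d^{(b)}(A')$ satisfies condition (ii) of regularity, and
\[
\labs{G(z)}=q(t_0+\labs{z}\Delta)\mathbf 1+\mathbf a_z,\qquad \mathbf a_z\in\{0,\ldots,q-1\}^d .
\]
Colouring $z\mapsto\mathbf a_z$ and taking a monochromatic arithmetic regular embedding $f:T_k^{(b^d)}\to T_N^{(b^d)}$ with signature $r_0+tD$, the map $F=G\circ f$ is regular. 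Its signature is $\mathbf v+tQ\mathbf 1$ with $\mathbf v=q(t_0+r_0\Delta)\mathbf 1+\mathbf a$ and $Q=qD\Delta$, as required.
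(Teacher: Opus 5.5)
Your proposal is correct and follows the paper's proof essentially verbatim. It uses the same block encoding with $S=C_{d,b}(q\mathbf 1)+1$, and your estimate $S-1\leq d\,b^{(d-1)q}(b-1)^q$ is exactly the paper's upper bound $C_q\leq d\,b^{q\alpha_{\mathrm{crit}}(d,b)}$ from Lemma~\ref{Lemma: critical growth rate}; the rest (truncation, Theorem~\ref{Thm: Mixed arity FW}, the local-splitting lift, and the $q^d$-colouring via Theorem~\ref{Thm: Equal arity colouring}) also matches the paper. The only blemish is your aside about $d=1$, $b=2$, which needs no adjustment: $S-1=C_q\geq b^{q\alpha_{\mathrm{crit}}(d,b)}\geq 1$ always, and $S\leq B$ holds because $b^q-(b-1)^q\geq 1$.
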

Our proof in the binary planar case extends mutatis mutandis to the general $b$-ary $d$-dimensional case of Theorem~\ref{Thm: Main general theorem}. Let $q\geq 1$ and $L\geq 1$. In this setting our block alphabet
\[ \Omega_q: = \underbrace{(\Lambda_b^q)\times \cdots \times (\Lambda_b^q)}_{d \text{ times}}\]
has cardinality $\labs{\Omega_q} = b^{dq}$ so that the block tree $\Omega_q^{\leq L}$ is a complete $b^{dq}$-ary tree of height $L$, whose words are written in letters that are $d$-tuples of $b$-ary words of length $q$. As before we define the block map
\[ \Theta_q: \Omega_q ^{\leq L} \to \Gamma_{(qL,\ldots,qL)}^{(b)}\]
by concatenating blocks separately in each coordinate. Explicitly, if
\[ \omega = \bm{\alpha}^{(1)}\ldots\bm{\alpha}^{(t)}, \qquad \text{where each} \qquad \bm{\alpha}^{(i)} = \left(\alpha_1^{(i)},\ldots,\alpha_d^{(i)}\right) \in \Omega_q,\]
then
\[ \Theta_q(\omega) := \left(\alpha_1^{(1)}\ldots \alpha_1^{(t)}, \ldots,\alpha_d^{(1)}\ldots \alpha_d^{(t)}\right).\]
The block map $\Theta_q$ is then an order isomorphism between $\Omega_q^{\leq L}$ and the $q$-synchronous levels of $\Gamma_{(qL,\ldots,qL)}^{(b)}$, i.e. levels of the form $(qt,\ldots,qt)$ for integers $0\leq t \leq L$.

The value of $\alpha_{\text{crit}}(d,b)$ is exactly the critical growth rate above which we can find a $q$ such that the block encoding of $\Gamma_A$ must contain an arithmetic replica of a finite $S_q$-ary tree, where $S_q$ is exactly the intermediate arity needed for Proposition~\ref{Prop: General splitting lemma} to force the local $b$-ary $d$-dimensional splitting vertices.
\begin{lem}[Critical growth rate]\label{Lemma: critical growth rate}
Let $b\geq 2$ and $d\geq 1$. For each $q\geq 1$ define
\[ C_q: = b^{dq} - (b^q - (b-1)^q)^d, \qquad B_q:=b^{dq}, \qquad \text{and }S_q:= C_q+1.\]
Then for any $\alpha > \alpha_{\text{crit}}(d,b)$ there exists $q\geq 1$ such that
\begin{equation}\label{eq: q for crit dim}
\log_{B_q}(S_q-1) < \frac{\alpha}{d}.
\end{equation}
Consequently, for every $\alpha > \alpha_{\text{crit}}(d,b)$ there exists a block length $q=q(b,d,\alpha)$ such that for every $N\geq 1$ and all sufficiently large $L$, if $n=qL$ and
\[ A\subset \partial \Gamma_{(qL,\ldots,qL)}^{(b)} \qquad \text{has} \qquad \labs{A}\geq b^{\alpha n}\]
then the ancestor closure $T_{\tilde{A}}\subset \Omega_q^{\leq L}$ of the pullback $\tilde{A}:= \Theta_q^{-1}(A)$ contains an arithmetic replica of $T_N^{(S_q)}$.
\end{lem}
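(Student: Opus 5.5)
We prove the two assertions in turn. For the first, set $\beta_q := \log_{B_q}(S_q-1) = \log_{B_q} C_q$, so that
\[
\beta_q = \frac{\log_b C_q}{dq}.
\]
It suffices to show that $d\,\beta_q \to \alpha_{\text{crit}}(d,b)$ as $q\to\infty$; equivalently, $\frac1q\log_b C_q \to d-1+\log_b(b-1)$. Given this, any $\alpha>\alpha_{\text{crit}}(d,b)$ satisfies $\alpha/d > \beta_q$ for all large $q$, which is \eqref{eq: q for crit dim}.

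To estimate $C_q$, write $x=b^q$ and $y=(b-1)^q$, so that $0\leq y<x$ and $C_q = x^d-(x-y)^d$. Factoring the difference of $d$-th powers gives
\[
C_q = y\sum_{j=0}^{d-1} x^{j}(x-y)^{d-1-j}.
\]
Each summand lies between $0$ and $x^{d-1}$, and the $j=d-1$ summand equals $x^{d-1}$. Hence
\[
y\,x^{d-1} \leq C_q \leq d\,y\,x^{d-1},
\]
that is, $(b-1)^q b^{(d-1)q}\leq C_q\leq d\,(b-1)^q b^{(d-1)q}$. Taking $\log_b$ and dividing by $q$, the factor $d$ contributes $\log_b d/q\to 0$, which gives the limit. (For $b=2$ one has $\log_b(b-1)=0$, and the formula still holds.) For later use, fix $q=q(b,d,\alpha)$ with $\beta_q<\alpha/d$. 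Then $2\leq S_q\leq B_q$: the upper bound holds because $b^q-(b-1)^q\geq 1$, and the lower bound because $C_q\geq 1$.

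For the consequence, we transfer to the block tree exactly as in the proof of Theorem~\ref{Thm: Refined binary product theorem}. Since $\Theta_q$ is a bijection from $\Omega_q^L$ onto $\partial\Gamma^{(b)}_{(qL,\ldots,qL)}$, we have
\[
|\tilde A| = |A| \geq b^{\alpha qL} = B_q^{(\alpha/d)L}.
\]
Here $n=qL$, so no truncation remainder appears. The block tree $\Omega_q^{\leq L}$ is isomorphic to $T_L^{(B_q)}$, and $\alpha/d > \log_{B_q}(S_q-1)$ with $2\leq S_q\leq B_q$. Theorem~\ref{Thm: Mixed arity FW}, applied with $b\to B_q$, $s\to S_q$, $k\to N$ and exponent $\alpha/d$, therefore yields an arithmetic replica of $T_N^{(S_q)}$ inside $T_{\tilde A}$ once $L\geq N(B_q,S_q,N,\alpha/d)$.

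We expect the only genuine work to be the two-sided estimate of $C_q$, and the telescoping factorization handles it directly. The rest consists of checking that the hypotheses of Theorem~\ref{Thm: Mixed arity FW} hold, in particular the arity constraint $2\leq S_q\leq B_q$.
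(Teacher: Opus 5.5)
Your proof is correct and follows the paper's argument: the paper writes $C_q = b^{dq}z_q\sum_{i=0}^{d-1}(1-z_q)^i$ with $z_q=((b-1)/b)^q$, which is your difference-of-powers factorization divided through by $x^{d-1}$, and both routes give the same bound $b^{q\alpha_{\text{crit}}(d,b)}\leq C_q\leq d\,b^{q\alpha_{\text{crit}}(d,b)}$ and then apply Theorem~\ref{Thm: Mixed arity FW} with exponent $\alpha/d$. Your explicit check that $2\leq S_q\leq B_q$ is a small but worthwhile addition, since the paper invokes that theorem without verifying its arity hypothesis.
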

\begin{proof}
Let
\[ z_q:= \left(\frac{b-1}{b}\right)^q \qquad \text{so that}\qquad C_q = b^{dq} \left[1-(1-z_q)^d \right].\]
Factorising $1-(1-z_q)^d$ by geometric summation we may write
\[
C_q = b^{dq} z_q \sum_{i=0}^{d-1}(1-z_q)^i = b^{q \alpha_{\text{crit}}(d,b)} \sum_{i=0}^{d-1}(1-z_q)^i.
\]
Since $z_q \in (0,1)$ the sum lies between $1$ and $d$ so
\[ b^{q \alpha_{\text{crit}}(d,b)} \leq C_q \leq d b^{q \alpha_{\text{crit}}(d,b)}.\] 
Taking logarithms base $b$ and dividing by $q$ yields
\[ \alpha_{\text{crit}}(d,b) \leq \frac{\log_b(C_q)}{q} \leq \alpha_{\text{crit}}(d,b) + \frac{\log_b(d)}{q}\]
so
\[ \lim_{q\to \infty} \frac{\log_b(C_q)}{q} = \alpha_{\text{crit}}(d,b).\]
Since
\[ d \log_{B_q}(S_q -1) = \frac{\log_b(C_q)}{q}\]
then for any $\alpha > \alpha_{\text{crit}}(d,b)$ we can find $q=q(b,d,\alpha)$ such that \eqref{eq: q for crit dim} holds. If $A\subset \partial \Gamma_{(qL,\ldots,qL)}^{(b)}$ and $\tilde{A}\subset \partial \Omega_q^{\leq L}$ are as in the statement of the lemma, then
\[ |\tilde{A}| = \labs{A} \geq b^{\alpha qL} = B_q^{(\alpha/d)L}\]
so Theorem~\ref{Thm: Mixed arity FW} ensures that $T_{\tilde{A}}$ contains an arithmetic replica of $T_N^{(S_q)}$ for $L$ large enough.
\end{proof}
Our value of $C_q$ in Lemma~\ref{Lemma: critical growth rate} is exactly $C_{d,b}(q,\ldots,q)$ from the statement of Proposition~\ref{Prop: General splitting lemma}, so that any subset of our block alphabet $\Omega_q = \partial \Gamma_{(q,\ldots,q)}^{(b)}$ of size at least $S_q = C_q +1$ generates a $b$-ary $d$-dimensional product splitting.

The block-map properties encapsulated in Lemmata \ref{Lemma: Block branching gives local splitting} and \ref{Lem: S-ary tree in block tree gives nearly arithmetic 4-ary tree} extend without issue to the more general $b$-ary $d$-dimensional block map, and so the remainder of the proof of Theorem~\ref{Thm: Main general theorem} exactly mirrors the binary planar case. A dictionary of the parameter replacements is given below in Table~\ref{Table: Parameter replacements}.

\begin{table}[H]
    \centering
    \small
    \renewcommand{\arraystretch}{1.2}
    \begin{tabular}{@{}lcc@{}}
        \toprule
        Parameter
        & Binary planar case
        & General case \\
        \midrule
        Block alphabet $\Omega_q$
        & $\{0,1\}^q\times \{0,1\}^q$
        & $(\Lambda_b^q)^d$ \\

        Block arity $B_q$
        & $4^q$
        & $b^{dq}$ \\

        Split-free threshold $C_q$
        & $2^{q+1}-1$
        & $C_q$ \\

        Intermediate arity $S_q$
        & $2^{q+1}$
        & $C_q+1$ \\

        Arity after pruning
        & $4$
        & $b^d$ \\

        Relative splitting displacements
        & $\{0,\ldots,q-1\}^2$
        & $\{0,\ldots,q-1\}^d$ \\

        Number of colours
        & $q^2$
        & $q^d$ \\

        Critical exponent
        & $1$
        & $d-1+\log_b(b-1)$ \\
        \bottomrule
    \end{tabular}
    \caption{Parameters in the binary planar and general block
    constructions.}
    \label{Table: Parameter replacements}
\end{table}

\begin{rema}[Sharpness of $\alpha_{\text{crit}}(d,b)$]
The strict inequality $\alpha > \alpha_{\text{crit}}(d,b)$ in the statement of Theorem~\ref{Thm: Main general theorem} is necessary. Indeed, at the critical exponent, a leaf set need not even generate a single full product splitting. Fix $b\geq 2$ and $d\geq 1$. Let $W = \{0,1,\ldots,b-2\} \subset \Lambda_b$ and for each $n\geq 1$ consider the leaf set
\[ A:= W^n \times \underbrace{\Lambda_b^{n} \times \cdots \times \Lambda_b^{n}}_{d-1 \text{ times}} \subset \partial \Gamma_{(n,\ldots,n)}^{(b)}\]
of all $d$-tuples of words of length $n$ in the alphabet $\Lambda_b$ such that the word in the first component does not contain the symbol $b-1$. The set $A$ has cardinality
\[\labs{A} = (b-1)^{n} b^{(d-1)n} = b^{\alpha_{\text{crit}}(d,b)n}\]
yet
\[ \Split_d^{(b)}(A) = \emptyset.\]
Indeed, for any $\mathbf{u}=(u_1,\ldots,u_d) \in \Gamma_{A}$ with $\labs{u_i}< n$ for each $i=1,\ldots,d$, every diagonal child of $\mathbf{u}$ whose first coordinate is $u_1 (b-1)$ is not in $\Gamma_A$, since no leaf in $A$ contains the digit $(b-1)$ in its first coordinate.
\end{rema}
\section{Replicas in dense subsets of products of trees}
We define the \textit{weight} of a subset $H\subset \Gamma_{\mathbf{n}}^{(b)}$ to be
\[ w_b^d(H) := \sum_{(x_1,\ldots,x_d) \in H} b^{-(\labs{x_1} + \cdots +\labs{x_d})}.\]
Thus $d(H)=w_b^d(H)/\prod_{i=1}^d(n_i+1)$. The following is a refined form of Theorem~\ref{Th: Product density result}, again with the additional conclusion that the common difference of the signature can be chosen in the diagonal direction.
\begin{thm}[Refined form of Theorem~\ref{Th: Product density result}]\label{Thm: Refined product density result}
For all $b\geq 2$, $d,k\geq 1$, and $\delta>0$, there exists $N=N(b,d,k,\delta)$ such that for every $\mathbf{n} = (n_1,\ldots,n_d) \in \Z_{\geq 0}^{d}$ with $\min_{i} n_i \geq N$, every $H \subset \Gamma_{\mathbf{n}}^{(b)}$ with
\[ w_b^d(H) \geq \delta \prod_{i=1}^{d} (n_i +1)\]
contains an arithmetic replica of $T_k^{\left(b^d\right)}$. Moreover, the replica can be chosen so that its signature has the form
\[ \mathbf{v} + tQ \mathbf{1} \qquad \text{for each }t=0,1,\ldots,k\]
for some $Q\geq 1$. 
\end{thm}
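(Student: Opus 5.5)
The plan is to reduce to the one-dimensional equal-arity density result, Theorem~\ref{Thm: Equal arity density}, applied with arity $b^d$. The reduction works by restricting $H$ to a \emph{diagonal chain} of product levels. For a level $\mathbf c\in\Z_{\geq0}^d$ with $\min_i c_i=0$, set $M_{\mathbf c}:=\min_i(n_i-c_i)$. The chain through $\mathbf c$ is then $\mathbf c,\mathbf c+\mathbf 1,\ldots,\mathbf c+M_{\mathbf c}\mathbf 1$. These chains partition the full level set $\{\mathbf 0\leq\mathbf i\leq\mathbf n\}$.

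For a vertex $\mathbf r$ at level $\mathbf c$, the vertices $\mathbf r\mathbf w$ with $\mathbf w\in\Gamma^{(b)}_{(t,\ldots,t)}$ at diagonal level $(t,\ldots,t)$, $0\leq t\leq M_{\mathbf c}$, form a copy $\mathcal T_{\mathbf r}$ of $T^{(b^d)}_{M_{\mathbf c}}$. In this copy the children of a vertex are exactly its $b^d$ diagonal children, via the block map $\Theta_1$ with $q=1$. The key structural observation is that $\Theta_1$ is an order isomorphism whose child relation is the diagonal-child relation. Hence a regular embedding $T_k^{(b^d)}\to\mathcal T_{\mathbf r}$ with signature $t_0+sD$ is a regular embedding into $\Gamma^{(b)}_{\mathbf n}$, because descendants of children go to descendants of distinct diagonal children. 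Its signature is $(\mathbf c+t_0\mathbf 1)+sD\mathbf 1$, which is arithmetic in the diagonal direction with $Q=D$ and $\mathbf v=\mathbf c+t_0\mathbf 1$.

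It remains to find a chain and a root $\mathbf r$ with $w_{b^d}(H\cap\mathcal T_{\mathbf r})\geq(\delta/2)(M_{\mathbf c}+1)$ and $M_{\mathbf c}$ large. Write $w_{\mathbf i}:=b^{-|\mathbf i|}\labs{\{x\in H:\labs{x}=\mathbf i\}}\in[0,1]$. A vertex of $\mathcal T_{\mathbf r}$ at chain position $t$ has actual level $\mathbf c+t\mathbf 1$ and one-dimensional weight $b^{-dt}=b^{|\mathbf c|}\,b^{-|\mathbf c+t\mathbf 1|}$. So averaging over the $b^{|\mathbf c|}$ roots $\mathbf r$ at level $\mathbf c$ gives
\[ b^{-|\mathbf c|}\sum_{\mathbf r}w_{b^d}(H\cap\mathcal T_{\mathbf r})=\sum_{t=0}^{M_{\mathbf c}}w_{\mathbf c+t\mathbf 1}, \]
and some root attains at least the chain weight $W_{\mathbf c}:=\sum_t w_{\mathbf c+t\mathbf 1}$.

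To choose the chain, let $M_0:=N(b^d,\delta/2,k)$ from Theorem~\ref{Thm: Equal arity density}. A chain is short if $M_{\mathbf c}<M_0$. Every level on a short chain satisfies $n_j-i_j<M_0$ for some $j$. The number of such levels is at most $\sum_j M_0\prod_{i\neq j}(n_i+1)\leq dM_0\prod_i(n_i+1)/(N+1)$, since $\min_i n_i\geq N$. Taking $N\geq 2dM_0/\delta$ makes this at most $(\delta/2)\prod_i(n_i+1)$. Since each $w_{\mathbf i}\leq1$, the long chains then carry total weight at least $(\delta/2)\prod_i(n_i+1)$. Their total length $\sum(M_{\mathbf c}+1)$ is at most $\prod_i(n_i+1)$, so pigeonholing gives a long chain with $W_{\mathbf c}\geq(\delta/2)(M_{\mathbf c}+1)$. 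Applying Theorem~\ref{Thm: Equal arity density} to the best root of that chain finishes the proof.

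The main obstacle is only this bookkeeping: the boundary chains are short, and the one-dimensional weight must match the product weight up to the factor $b^{|\mathbf c|}$ that is absorbed by averaging over roots. Both are handled above. The structural transfer of regular embeddings is immediate from the diagonal-child description.
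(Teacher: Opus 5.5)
Your proposal is correct and follows essentially the same route as the paper: you partition the product levels into maximal diagonal chains, average over the roots at the base level of a good chain to convert product weight into $w_{b^d}$-weight, and apply Theorem~\ref{Thm: Equal arity density} with arity $b^d$ to the resulting copy of $T^{(b^d)}_{M_{\mathbf c}}$. The only difference is in how the chain is chosen: you discard the levels on short chains and average weighted by chain length, which gives relative density $\delta/2$ on a long chain, whereas the paper bounds the number of chains by $d\prod_i(n_i+1)/(1+\min_i n_i)$ and takes the heaviest one, which is then automatically long with relative density at least $\delta/d$.
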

\begin{proof}
Partition the product levels of $\Gamma_{\mathbf{n}}^{(b)}$ into maximal diagonal chains
\[ C_\mathbf{a} = \{ \mathbf{a} + t \mathbf{1} \, : \, 0\leq t <l_{\mathbf{a}}\}\]
where
\[ 0\leq \mathbf{a} \leq \mathbf{n}, \qquad \min_{i} a_i = 0, \qquad \text{ and } l_{\mathbf{a}} = 1+ \min_{i}(n_i-a_i).\]
Each product level belongs to exactly one such chain. By a union bound over the coordinate faces, the number of such chains $R$ satisfies
\[ R \leq \sum_{i=1}^d \prod_{j \neq i} (n_j+1) \leq \frac{d \prod_{i=1}^d (n_i+1)}{m}\]
where $ m: = 1+ \min_{i}n_i$. For each chain let
\[ H_{\mathbf{a}}= \{\mathbf{x} \in H \,: \, \labs{\mathbf{x}} \in C_{\mathbf{a}}\}.\]
Since these sets partition $H$, we have that $w_b^d(H) = \sum_{\mathbf{a}} w_b^d(H_{\mathbf{a}})$, and so there must be one chain $C_{\mathbf{a}}$ with length $l:= l_\mathbf{a}$ such that
\[ w_b^d(H_{\mathbf{a}}) \geq \frac{w_b^d(H)}{R} \geq \frac{\delta m\prod_{i=1}^d (n_i+1)}{d \prod_{i=1}^d(n_i+1)} = \frac{m\delta}{d}.\]
The total weight of each product level in $\Gamma_{\mathbf{n}}^{(b)}$ is $1$, and each $C_{\mathbf{a}}$ has length at most $m$, so $w_b^d(H_{\mathbf{a}}) \leq l \leq m$. Hence
\[ l\geq \frac{\delta m}{d} \qquad \text{and} \qquad w_b^d (H_{\mathbf{a}}) \geq \frac{\delta l}{d}.\]
In words, the chain is long and carries a positive proportion of its maximum possible weight.

Now write
\[ \mathcal{L}_{\mathbf{a}} = \Lambda_b^{a_1} \times \cdots \times \Lambda_b^{a_d}\]
for level $\mathbf{a}$ of $\Gamma_{\mathbf{n}}^{(b)}$. Let $\Omega = \Lambda_b^d$ be a block alphabet.\footnote{$\Omega$ is exactly $\Omega_q$ with $q=1$ from Section~\ref{Sec: general case}.} For each $\mathbf{u} = (u_1,\ldots,u_d) \in \mathcal{L}_{\mathbf{a}}$, define a map
\[\phi_{\mathbf{u}}: \Omega^{\leq l-1} \to \Gamma_{\mathbf{n}}^{(b)}\]
by concatenating blocks separately in each coordinate and then prefixing by $\mathbf{u}$. Explicitly, if
\[ \omega = \bm{\alpha}^{(1)}\ldots \bm{\alpha}^{(t)}, \qquad \text{where each} \qquad \bm{\alpha}^{(i)} = \left(\alpha_1^{(i)},\ldots,\alpha_d^{(i)}\right) \in \Omega\] 
then
\[ \phi_{\mathbf{u}}(\omega) = \left(u_1 \alpha_1^{(1)}\ldots \alpha_1^{(t)},\ldots,u_d \alpha_d^{(1)}\ldots \alpha_d^{(t)}\right).\]
The map $\phi_{\mathbf{u}}$ sends level $t$ of $\Omega^{\leq l-1}$ to level $\mathbf{a} + t \mathbf{1}$, and the $b^d$ children of any non-leaf $x\in \Omega^{\leq l-1}$ are mapped to the diagonal children of $\phi_{\mathbf{u}}(x)$.

Let
\[ K_{\mathbf{u}}:= \phi_{\mathbf{u}}^{-1}(H). \]
As $\mathbf{u}$ varies over $\mathcal{L}_{\mathbf{a}}$, the images of $\phi_{\mathbf{u}}$ partition all product vertices on $C_{\mathbf{a}}$ and so
\begin{equation}\label{eq: computing weights through phi_u}
w_b^d(H_{\mathbf{a}}) = \sum_{\mathbf{u}\in \mathcal{L}_{\mathbf{a}}} w_b^d(\phi_{\mathbf{u}}(K_{\mathbf{u}})) = b^{-(a_1 + \cdots + a_d)} \sum_{\mathbf{u} \in \mathcal{L}_{\mathbf{a}}} w_{b^d}(K_{\mathbf{u}})
\end{equation}
where the second equality follows since any $x \in \Omega^{\leq l-1}$ with $\labs{x} = t$ satisfies
\[ w_b^d(\{\phi_{\mathbf{u}}(x)\}) = b^{-\left(a_1+\cdots+a_d + dt\right)} = b^{-(a_1+\cdots+a_d)} w_{b^d}(\{x\}).\]
Since $\labs{\mathcal{L}_{\mathbf{a}}}=b^{a_1+\cdots+a_d}$, the right-hand side of equation~\eqref{eq: computing weights through phi_u} is exactly the average of the weights $w_{b^d}(K_{\mathbf{u}})$. Hence some $\mathbf{u}\in \mathcal{L}_{\mathbf{a}}$ must satisfy
\[ w_{b^d}(K_{\mathbf{u}}) \geq w_b^d(H_{\mathbf{a}}) \geq \frac{\delta l}{d}.\]
Since $l \geq (\delta/d)(1+\min_i n_i)$, Theorem~\ref{Thm: Equal arity density} ensures that $K_{\mathbf{u}}$ contains an arithmetic replica of $T_k^{\left(b^d\right)}$ provided that $\min_i n_i$ is large enough. The image of this replica under $\phi_{\mathbf{u}}$ is then an arithmetic replica of $T_k^{\left(b^d\right)}$ inside $H$ whose signature has common difference proportional to $\mathbf{1}$, as required.
\end{proof}

\end{document}